\documentclass{amsart}
\usepackage[T1]{fontenc}

\usepackage{amsthm}
\usepackage{tikz}
\usepackage{amsmath}
\usepackage{amsfonts}
\usepackage{leftindex}
\usepackage{physics}
\usepackage{microtype}
\usepackage{lmodern}
\usepackage{tikz-cd}
\usepackage{mathtools}
\usepackage{amssymb}
\usepackage{mathrsfs}
\usepackage{hyperref}
\usepackage{cleveref}
\hypersetup{
    colorlinks=true,
    linkcolor=black,
    filecolor=magenta,      
    urlcolor=red,
}

\newtheorem{theorem}{Theorem}

\newtheorem{lemma}[theorem]{Lemma}
\newtheorem{definition}[theorem]{Definition}
\newtheorem{proposition}[theorem]{Proposition}
\newtheorem{corollary}[theorem]{Corollary}
\newtheorem{remark}[theorem]{Remark}

\newcommand{\RR}{\mathbb{R}}
\newcommand{\ZZ}{\mathbb{Z}}

\newcommand{\CC}{\mathbb{C}}

\newcommand{\mfc}{\mathfrak{c}}
\newcommand{\mfg}{\mathfrak{g}}

\DeclareMathOperator{\Hom}{Hom}

\DeclareMathOperator{\End}{End}

\DeclareMathOperator{\Fl}{Fl}

\title{Kazhdan-Lusztig map and Langlands duality}

\author{Anlong Chua}

\begin{document}
\maketitle

\begin{abstract}
Let $G$ be a connected reductive group over $\CC$ with Weyl group $W$. Following a suggestion of Bezrukavnikov, we define a map from two-sided cells to conjugacy classes in $W$ using the geometry of the affine flag variety. This is an affine analog of the classical story of two-sided cells of $W$, special nilpotent orbits and special representations of $W$. The proof involves studying Springer representations appearing in the cohomology of affine Springer fibers. This relies on tools developed by Yun in his papers on Global Springer theory. Our result provides evidence for a conjecture of Lusztig on strata in a connected reductive group.
\end{abstract}

\section{Introduction}

\subsection{The classical setting}\label{subsect:intro_classical}

Let $G$ be a connected, reductive group over $\CC$ with Lie algebra $\mfg$. Fix a Borel subgroup $B\subset G$ and maximal torus $T \subset G$. Denote their Lie algebras by $\mathfrak{b}$, $\mathfrak{t}$ respectively. Let $W$ be the Weyl group of $G$, and denote the set of conjugacy classes in $W$ by $\underline{W}$. Let $\mathcal{N} \subset \mfg$ denote the nilpotent cone, and $\underline{\mathcal N}$ be the finite set of nilpotent orbits.

Let $\mathbb{X}_*(T) = \operatorname{Hom}(\mathbb{G}_m,T)$, $\mathbb{X}^*(T) = \operatorname{Hom}(T,\mathbb{G}_m)$ be the character and cocharacter lattices of $T$ respectively. Let $\Phi^+ \subset \Phi \subset \mathbb{X}^*(T)$ and $\Phi^{\vee,+} \subset \Phi^\vee \subset \mathbb{X}_*(T)$ be the based root and coroot systems.

In the introduction, we assume for simplicity that $G$ is simply connected.

Denote $\operatorname{rank} G = r$, and fix a choice of simple roots $\Sigma = \{\alpha_1, \dots, \alpha_r\} \subset \Phi^+$. This gives $(W, \Sigma)$ the structue of a Coxeter group, generated by the simple reflections corresponding to the simple roots.

In \cite{kazhdan1979representations}, Lusztig defines a partition of $W$ (as a set) into two-sided cells. We denote the set of two-sided cells of $W$ by $\operatorname{Cells}(W)$.

The flag variety $F = G/B$ has a stratification by affine spaces, indexed by elements of $W$:
\[
F = \bigsqcup_{w \in W} BwB/B.
\]
We denote the IC sheaf of the stratum $BwB/B$ by $IC_w$. If $j \colon BwB/B \to F$ is its locally closed embedding, then $IC_w = j_{!*}\CC[\ell(w)]$. This is a $B$-equivariant perverse sheaf on $F$. Its singular support, which we denote $SS(IC_w)$, is a certain conical subset of $T^*F = G \times_B \mathfrak{n}$. It hence comes equipped with a projection to $\mathcal{N}$. Denote the image of this projection by $\operatorname{AV}(w)$. In \cite{barbasch_primitive_1983}, Barbasch and Vogan show that $\operatorname{AV}(w) = \overline{O} \cap \mathfrak{n}$. Therefore, there is a well-defined largest orbit in the image of $\operatorname{AV}(w)$. 

It is known that $\operatorname{AV}$ is constant on two-sided cells of $W$. It follows that, $\operatorname{AV}$ gives a well defined map $\operatorname{Cells}(W) \to \underline{\mathcal{N}}$. 

Through the work of Barbasch-Vogan, Lusztig, Joseph, Schmid-Vilonen \cite{lusztig_characters_1984,joseph_associated_1985,LUSZTIG20093418,schmid2000characteristic}, it is known that there is a commutative diagram of bijections
\begin{center}
\begin{tikzcd}\label{diagram:finite}
\operatorname{Cells}(W)\arrow[rr, "\operatorname{AV}", ] \arrow[dr, leftrightarrow]& & \underline{\mathcal{N}}^{sp}\\
& \operatorname{Rep}^{sp}(W).\arrow[ur, leftrightarrow , "\operatorname{Spr}",sloped] &
\end{tikzcd}
\end{center}
Here, $\underline{\mathcal{N}}^{sp} \subset \underline{\mathcal{N}}$ is the set of \textit{special} nilpotent orbits. Similarly, $\operatorname{Rep}^{sp}(W) \subset \operatorname{Rep}(W)$ is the set of \textit{special} $W$ representations. These notions were defined by Lusztig \cite{lusztig1979class}. We review them in \Cref{sect:reformulation}.

The arrow labelled $\operatorname{Spr}$ denotes Lusztig's generalized Springer correspondence \cite{lusztig1981green}. The remaining bijection above is Kazhdan-Lusztig's construction of cell representations \cite{lusztig_cells_1989}. These are the same as Goldie rank representations defined by Joseph \cite{joseph1980goldie1,joseph1980goldie2}, \cite[Chapter 5]{lusztig_characters_1984}.

\subsection{An affine analog} \label{sect:intro_affine}

Let $F = \CC((t))$ be the field of Laurent series in $t$. The subring of Taylor series $\mathcal{O} = \CC[[t]]$ is its ring of integers with respect to the $t$-adic valuation. Let $X$ be an affine scheme of finite type over $\CC$. We denote by $LX$ the functor $LX(R) \mapsto X(R((t)))$ on the category of $\CC$-algebras. This is representable by an Ind-scheme. Similarly, define $L^+X$ to be the functor $L^+X(R) \mapsto X(R[[t]])$. It is representable by a scheme (not of finite type). We have the reduction morphism $\operatorname{ev}_t \colon L^+X \to X$ given by reduction mod $t$.

In particular, we have $ \operatorname{ev}_t^{-1}(B)\eqqcolon I \subset L^+ G \subset LG$. Let $I \subset P \subset LG(\CC)$ be a standard parahoric subgroup. We define the corresponding affine partial flag variety $\Fl^P$ to be the sheafification of the functor $\Fl^P(R) \mapsto LG(R) / P(R)$ on the category of $\CC$-algebras. When $P = I$, we abbreviate $\Fl^I = \Fl$, and call this the affine flag variety of $G$. 

Let $\ZZ\Phi^\vee \subset \mathbb{X}_*(T)$ be the coroot lattice. Define
\[
W_{\operatorname{aff}} \coloneqq \mathbb{Z}\Phi^\vee \rtimes W , \qquad \widetilde{W} = \mathbb{X}_*(T) \rtimes W  
\]
to be the affine Weyl group and extended affine Weyl group of $G$. If $G$ is almost simple, $W_{\operatorname{aff}}$ is a Coxeter group with simple reflections $\Sigma_{\operatorname{aff}} \coloneqq \Sigma \cup \{s_0\}$ where $s_0$ is the reflection corresponding to the affine root. In this case, standard parahoric subgroups are in bijection with proper subsets of $\Sigma_{\operatorname{aff}}$.

Like in the classical case, there is an Iwahori decomposition
\[
\Fl = \bigsqcup_{w \in \widetilde{W}} IwI/I
\]
with $IwI/I \cong \mathbb{C}^{\ell(w)}$.

Let $\langle - , -\rangle$ denote the Killing form on $\mathfrak{g}$. Using the invariant form $\operatorname{res}\langle - , -\rangle \frac{dt}{t}$ we identify the fiber over $gI$ with $\operatorname{Ad}(g)(\operatorname{Lie} I^+)$, where $I^+$ is the prounipotent radical of $I$. Let $\pi\colon T^*\Fl\to L\mfg$ be the projection.

We attempt to adapt Barbasch-Vogan's associated variety results from the classical case. Let $IC_w = j_{!*} \CC[\ell(w)]$. Denote its singular support by $SS(IC_w) \subset T^*\Fl$. It inherits the projection $\pi\colon T^*\Fl \to  L\mfg$. We will show:
\begin{theorem}\label{thm:intro_irred}
    \begin{enumerate}
        \item The set $\pi( SS(IC_w)) \subset L\mfg$ contains a dense open subset of regular semisimple, topologically nilpotent elements which can be $LG$-conjugated into the same Cartan subalgebra of $L\mfg$.
        \item The $LG$-conjugacy type of this Cartan subalgebra is independent of the choice of dense open subset.
    \end{enumerate}
\end{theorem}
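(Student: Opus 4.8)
The plan is to compute $\pi(SS(IC_w))$ explicitly by reducing to a local picture around each Iwahori-orbit stratum. First I would recall the description of $T^*\Fl$ as $LG \times_I (\operatorname{Lie} I^+)$ — so a covector over $gI$ is a pair $(gI, \operatorname{Ad}(g)\xi)$ with $\xi \in \operatorname{Lie} I^+$ — and note that $SS(IC_w)$ is a conical $I$-invariant Lagrangian contained in the union of conormals $\overline{N^*_{IvI/I}\Fl}$ for $v \le w$, with $v = w$ occurring (the zero section component), exactly as in the classical Kazhdan–Lusztig/Ginzburg picture. The conormal to the open stratum already gives, generically over $IwI/I$, covectors of the form $\operatorname{Ad}(g)\xi$ with $\xi$ a generic element of (a twist of) $\operatorname{Lie} I^+$; a generic such $\xi \in \operatorname{Lie} I^+ \subset L\mfg$ is regular semisimple and topologically nilpotent (its reduction mod $t$ lies in $\mathfrak n$, and for generic $\xi$ the characteristic polynomial has the right Newton slopes), so part (1) is essentially the statement that the conormal variety of a single affine Schubert cell projects to such a locus. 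The genuine content is identifying the Cartan subalgebra and proving it is the same across components.

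For part (2), the approach is to use the theory of affine Springer fibers as the bridge — this is where Yun's global Springer theory enters. For $\gamma \in L\mfg$ regular semisimple and topologically nilpotent, the affine Springer fiber $\Fl_\gamma = \{gI : \operatorname{Ad}(g^{-1})\gamma \in \operatorname{Lie} I^+\}$ is finite-dimensional (Kazhdan–Lusztig), and by definition $\pi^{-1}(\gamma) \cap SS(IC_w)$ sits inside $T^*\Fl|_{\Fl_\gamma}$. The standard microlocal dictionary says $\gamma \in \pi(SS(IC_w))$ iff $\Fl_\gamma$ meets the relevant singular-support stratum, and more precisely controls a Springer-type action of the centralizer on $H^*(\Fl_\gamma)$. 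The $LG$-conjugacy class of the Cartan $\mf t_\gamma := \mathrm{Lie}\, Z_{L\mfg}(\gamma)$ is an invariant of $\gamma$ that varies locally constantly — indeed, it is pinned down by the associated $W$-conjugacy class via the Kazhdan–Lusztig parametrization of topologically nilpotent regular semisimple $LG$-orbits by conjugacy classes in $\widetilde W$ (equivalently elliptic-in-a-Levi classes in $W$). So I would show: (i) the conjugacy type is locally constant on the regular semisimple topologically nilpotent locus, hence constant on the dense open subset from part (1) since that subset is irreducible (it is open in an irreducible component of $\pi(SS(IC_w))$); (ii) two different choices of dense open subset both lie in $\pi(SS(IC_w))$, which is itself a closed $LG$-invariant subset, and by a dimension/irreducibility count the regular semisimple topologically nilpotent locus inside it has a unique component of top dimension, forcing the two open subsets to have the same generic conjugacy type.

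Concretely the key steps, in order, are: (a) set up $T^*\Fl = LG\times_I \operatorname{Lie} I^+$ and the conormal description of $SS(IC_w)$; (b) show a generic covector over $IwI/I$ projects to a regular semisimple topologically nilpotent element, giving a dense open subset of $\pi(SS(IC_w))$ with constant $LG$-conjugacy type of its Cartan — this establishes (1) and the existence half of (2); (c) prove that the regular semisimple topologically nilpotent elements in the \emph{whole} of $\pi(SS(IC_w))$ form an irreducible locally closed subset, or at least that any two dense open subsets as in (1) are forced to overlap in conjugacy type, using that $\pi(SS(IC_w))$ is closed, $LG$-stable, and finite type in an appropriate sense, together with the Kazhdan–Lusztig classification to convert "same $LG$-orbit closure" into "same $\widetilde W$-conjugacy class"; (d) conclude independence. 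I expect step (c) — ruling out that $\pi(SS(IC_w))$ could contain two distinct maximal-dimensional families of regular semisimple topologically nilpotent elements with inequivalent Cartans — to be the main obstacle, and the resolution will go through Yun's results: the $H^*(\Fl_\gamma)$ carry a $W_\gamma$-action (where $W_\gamma$ is the relative Weyl group of the Cartan $\mf t_\gamma$), this action is the specialization of a single global/perverse construction over the base of $\gamma$'s, and the component of $SS(IC_w)$ in question corresponds to a fixed irreducible constituent, which cannot jump between different $W_\gamma$ as $\gamma$ varies in a connected family. That is exactly the input that makes the "affine Springer representation" well-defined and hence makes the conjugacy type of the Cartan an honest invariant of the two-sided cell.
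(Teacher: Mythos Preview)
Your approach diverges substantially from the paper's, and there is a genuine gap in step (c).

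The paper's proof proceeds by a reduction to the \emph{finite} flag variety. First one shows (Lemma~\ref{lem:sing-supp-constant}) that $\pi(SS(IC_w))$ depends only on the two-sided cell of $w$, using convolution on $\Fl$ and standard singular-support estimates. Then one invokes Lusztig's theorem that every two-sided cell of $W_{\operatorname{aff}}$ meets some finite parahoric subgroup $W_P$. For $w \in W_P$ the sheaf $IC_w$ is the pushforward of a classical IC sheaf along the closed embedding $i\colon P/I = G_P/B_{G_P} \hookrightarrow \Fl$, and a direct computation of $\ker(di)$ (Lemma~\ref{lem:ker_di}) yields the explicit formula $V_w = \operatorname{Lie} P^+ + (\overline{O}\cap \mathfrak{n}_P)$, where $O \subset \mfg_P$ is the nilpotent orbit attached to $w$ by the classical Barbasch--Vogan theory. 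The set $(O\cap \mathfrak{n}_P) + \operatorname{Lie} P^+$ is then seen to be fp-irreducible (Lemma~\ref{lem:KL_splitting_type}), so Kazhdan--Lusztig's result on fp-irreducible subsets of $L\mfg$ \cite[Section~6]{kazhdan1988fixed} gives a unique generic Cartan type. No affine Springer fibers or global Springer theory enter at this stage.

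Your plan to work directly with conormals to affine Schubert cells and then appeal to global Springer theory is more complicated and, as written, does not close. Two concrete issues. First, you assert that $\pi(SS(IC_w))$ is $LG$-stable; it is only $I$-stable, since $IC_w$ is $I$-equivariant but not $LG$-equivariant, so the argument in step~(c) cannot proceed as stated. Second, the irreducibility (or uniqueness of the top component) of the regular semisimple locus in $\pi(SS(IC_w))$ is exactly the crux, and you propose to extract it from Yun's global Springer theory; but that machinery compares Springer representations across Langlands dual groups and is not set up to prove fp-irreducibility of a singular-support image. In the paper, global Springer theory appears only in the proof of the \emph{main} theorem (the commutativity of the diagram in Theorem~\ref{thm:intro_diagram}), after the map $\widetilde{\operatorname{AV}}$ has already been constructed. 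The idea you are missing is precisely the cell-theoretic reduction to the finite parahoric case, which converts the affine problem into a classical one where Barbasch--Vogan already supplies the answer.
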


Recall \cite[Lemma 1.2]{kazhdan1988fixed} that $LG$-conjugacy classes of Cartan subalgebras of $L\mfg$ are parametrized by conjugacy classes in $W$. If $\gamma \in L\mfg$ is regular semisimple, we will say that $\gamma$ is \textit{of type $[w]$} if it lies in a Cartan subalgebra corresponding to the conjugacy class $[w]$.

Denote the set of conjugacy classes in $W$ by $\underline{W}$. It follows from \Cref{thm:intro_irred} that we have a well defined map $\widetilde{\operatorname{AV}}\colon \widetilde{W} \to \underline{W}$. The definition of $\widetilde{\operatorname{AV}}$ was suggested to us by Roman Bezrukavnikov. We now state our main theorem:
\begin{theorem}\label{thm:intro_diagram}
    The map $\widetilde{AV}$ is constant on two-sided cells of $\widetilde{W} = W_{\operatorname{aff}}$. It fits into a commutative diagram
    \begin{center}
    \begin{tikzcd}
    \operatorname{Cells}(W_{\operatorname{aff}})\arrow[rr, "\underline{c}" ,leftrightarrow] \arrow[dr,"\widetilde{\operatorname{AV}}"]& & \underline{\mathcal{N}_{G^\vee}}\\
    & \underline{W}.\arrow[ur, hookleftarrow , "\operatorname{KL}_{G^\vee}",sloped] &
    \end{tikzcd}
    \end{center}
\end{theorem}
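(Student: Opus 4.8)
The plan is to combine three ingredients: (i) Lusztig's bijection $\underline{c}\colon\operatorname{Cells}(W_{\operatorname{aff}})\xrightarrow{\sim}\underline{\mathcal N}_{G^\vee}$ between two-sided cells of the affine Weyl group and nilpotent orbits of the Langlands dual group, (ii) the Kazhdan–Lusztig map $\operatorname{KL}_{G^\vee}\colon\underline{\mathcal N}_{G^\vee}\hookrightarrow\underline W$ attaching to a nilpotent orbit a conjugacy class in $W$ (via an $\mathfrak{sl}_2$-triple and the Weyl group of the reductive centralizer, or equivalently via the affine Springer fiber picture), and (iii) the geometric description of $\widetilde{\operatorname{AV}}$ coming from \Cref{thm:intro_irred}. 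The target is to show the triangle commutes, i.e. $\widetilde{\operatorname{AV}}(w) = \operatorname{KL}_{G^\vee}(\underline c(\mathbf c))$ whenever $w$ lies in the two-sided cell $\mathbf c$; constancy of $\widetilde{\operatorname{AV}}$ on cells is then automatic.

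First I would reduce the problem to a statement about cohomology of affine Springer fibers. Given $w\in\widetilde W$, \Cref{thm:intro_irred} produces a dense open locus of regular semisimple topologically nilpotent $\gamma\in\pi(SS(IC_w))$ all of a fixed type $[v]\in\underline W$, and $\widetilde{\operatorname{AV}}(w)=[v]$. The key point is that the fiber $\pi^{-1}(\gamma)\cap SS(IC_w)$, for such $\gamma$, is contained in — and generically equal to a union of components of — the affine Springer fiber $\operatorname{Sp}_\gamma=\{gI\in\Fl : \operatorname{Ad}(g^{-1})\gamma\in\operatorname{Lie}I^+\}$, and that $SS(IC_w)$ being nonzero over $\gamma$ detects which irreducible constituents appear in the Springer-type $\widetilde W$-action (or rather the relevant symmetry group action) on $H^*(\operatorname{Sp}_\gamma)$. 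This is exactly the input furnished by Yun's global Springer theory: the cohomology of affine Springer fibers at such $\gamma$ carries a graded action of $\widetilde W$ (through the "homogeneous" or rational Cherednik-type action built from Yun's sheaves), and the support of $IC_w$ over $\gamma$ picks out a submodule whose "top" constituent is controlled by the two-sided cell of $w$. So I would: (1) use Yun's construction to put a $\widetilde W$-module structure on $H^*(\operatorname{Sp}_\gamma)$ and identify, via parabolic induction from the reductive centralizer $H^\vee$ of $\gamma$'s "leading term", the Springer-theoretic content with $\operatorname{Ind}_{W_{H^\vee}}^{W}$ of a Springer representation attached to a nilpotent orbit in $\mathfrak h^\vee$; (2) identify, via microlocal analysis (Kashiwara–Schapira, as adapted to the affine setting), the constituents of $H^*(\operatorname{Sp}_\gamma)$ on which $IC_w$ has nonzero characteristic cycle with the image of the two-sided cell $\mathbf c\ni w$ under the affine analog of Lusztig's cell–representation correspondence; (3) match this, via Lusztig's own description of $\underline c$ and $\operatorname{KL}_{G^\vee}$, with the nilpotent orbit of $G^\vee$ and its Kazhdan–Lusztig conjugacy class.

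The commutativity then amounts to the compatibility: the conjugacy class $[v]$ recording the type of the Cartan $\mathfrak t_\gamma\subset L\mathfrak g$ equals $\operatorname{KL}_{G^\vee}$ applied to the $G^\vee$-orbit $\underline c(\mathbf c)$. I would establish this by checking both sides agree on a large enough supply of cells — e.g. those coming from parabolic/levi induction (where both $\widetilde{\operatorname{AV}}$ and $\operatorname{KL}_{G^\vee}\circ\underline c$ are compatible with the corresponding inductions, reducing to smaller groups), together with the two extreme cells: the lowest two-sided cell of $W_{\operatorname{aff}}$ (which under $\underline c$ is the zero orbit, under $\widetilde{\operatorname{AV}}$ should give the class of the Coxeter element — this is the "most regular" Cartan, matching $\operatorname{KL}_{G^\vee}(0)=[\mathrm{Cox}]$ in Lusztig's normalization), and the highest cell $\{e\}$ (which gives the regular orbit and the identity class). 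An induction on semisimple rank, using that a regular semisimple topologically nilpotent $\gamma$ of non-Coxeter type is conjugate into a proper Levi of $L\mathfrak g$, should let me bootstrap from these base cases; Lusztig's explicit tables for exceptional types serve as a final check.

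The main obstacle I anticipate is step (2): making the microlocal/characteristic-cycle argument rigorous in the affine (infinite-dimensional, ind-scheme) setting. In the classical case one invokes Barbasch–Vogan and the finite-dimensionality of $T^*F=G\times_B\mathfrak n$; here $T^*\Fl$ is infinite-dimensional, $IC_w$ is a perverse sheaf on an ind-scheme, and one must make sense of "$SS(IC_w)$ over $\gamma$" and relate it to components of an affine Springer fiber whose dimension is governed by the $\delta$-invariant of $\gamma$. I expect to handle this by working on finite-dimensional approximations (Schubert varieties $\Fl_{\le w'}$ and the corresponding finite pieces of the affine Springer fiber, using Kazhdan–Lusztig's dimension formula and Goresky–Kottwitz–MacPherson-type equivariant arguments), showing the relevant characteristic cycle stabilizes, and then appealing to Yun's finiteness results to transfer the $\widetilde W$-action. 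A secondary difficulty is pinning down the precise normalization of $\operatorname{KL}_{G^\vee}$ (duality can introduce a twist by the longest element or an outer automorphism), which I would resolve by tracking the Coxeter base case carefully.
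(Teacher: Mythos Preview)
Your proposal has the right spirit in invoking Yun's global Springer theory and the $\widetilde W$-action on affine Springer fiber cohomology, but it misses the central reduction that makes the paper's argument work, and your proposed substitute has a real gap.

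The paper does \emph{not} attempt a direct microlocal analysis of $SS(IC_w)$ over a generic $\gamma$ for arbitrary $w$. Instead it first proves constancy on two-sided cells by an elementary convolution argument (if $C_x$ appears in $C_s C_{x'}$ then $\pi SS(IC_x)\subset\pi SS(IC_{x'})$, using the standard singular support estimates for pushforward along the convolution map). Then it invokes Lusztig's result that every two-sided cell meets some finite parahoric $W_P\subset W_{\operatorname{aff}}$. For $w\in W_P$ the Schubert variety sits inside the finite flag variety $G_P/B_P\hookrightarrow\Fl$, so $SS(IC_w)$ is computed by the \emph{classical} Barbasch--Vogan associated variety: one gets $\pi SS(IC_w)=(\overline{O_P}\cap\mathfrak n_P)+\operatorname{Lie}P^+$ for a special nilpotent orbit $O_P\subset\mathfrak g_P$, and hence $\widetilde{\operatorname{AV}}(\mathbf c)=\operatorname{KL}_G^P(O_P)$. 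This bypasses your step (2) entirely: no infinite-dimensional characteristic-cycle argument is needed. The main theorem then becomes the purely combinatorial-looking identity $\operatorname{KL}_{G^\vee}\bigl(\operatorname{Spr}_{G^\vee}j_{W_P}^W E_P\bigr)=\operatorname{KL}_G^P\bigl(\operatorname{Spr}_{G_P}E_P\bigr)$, and \emph{that} is what is proved via affine Springer fibers, their $\widetilde W$-cohomology, and Yun's Langlands duality for the stable parabolic Hitchin complex.

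Your step (2), by contrast, asks to relate characteristic cycles of $IC_w$ over $\gamma$ to components of the affine Springer fiber and then to cell representations, in an honestly infinite-dimensional setting. Your proposed fix via finite-dimensional Schubert approximations and stabilization is not a known argument and is unlikely to work as stated: the Springer fiber components one sees depend delicately on $\gamma$, not just on the cell, and there is no obvious stabilization. Separately, your inductive scheme ``non-Coxeter type $\Rightarrow$ conjugate into a proper Levi of $L\mathfrak g$'' is false: a regular semisimple topologically nilpotent $\gamma$ of type $[v]$ with $v$ not Coxeter need not lie in any proper Levi (the type records Galois action on the Cartan, not a Levi reduction), so the rank induction does not get off the ground. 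The missing idea is precisely the parahoric reduction above, which replaces both difficulties with a finite computation on the $G$ side and an identity of $\operatorname{KL}$-maps to be proved by duality on the $G^\vee$ side.
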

Let us explain the maps in the above diagram. The arrow labelled $\underline{c}$ refers to Lusztig's bijection of two-sided cells in $W_{\operatorname{aff}} = \widetilde{W}$ with nilpotent orbits in $G^\vee$, the Langlands dual of $G$. The arrow $\operatorname{KL}_{G^\vee}$ refers to the map constructed by Kazhdan and Lusztig in \cite[Section 9]{kazhdan1988fixed}. We recall the definitions of these maps in \Cref{sect:main_thm}.

\subsection{Acknowledgements} I would like to sincerely thank my advisor Roman Bezrukavnikov for suggesting the problem and his excellent guidance throughout the project. I would like to express my deep gratitude to Zhiwei Yun for explaining various technical details related to his papers on global Springer theory and minimal reduction type. I would also like to thank him for his help and suggestions with various arguments in the paper. Additionally, I would like to thank George Lusztig for explaining to me some aspects of his work on cells in Weyl groups, and for pointing out a typographical error in an earlier version of the paper. I would also like to thank Minh-T\^{a}m Trinh for pointing me towards Yun's papers on global Springer theory. Finally, I am grateful to Ivan Losev, Calder Morton-Ferguson, Pablo Boixeda Alvarez, Vasily Krylov and Ivan Karpov for helpful discussions.

\section{Arc spaces}

Recall the notations $L(-)$, $L^+(-)$ from \Cref{sect:intro_affine}.

\subsection{The Chevalley base} Let $\mathfrak{c} = \mfg//G = \mathfrak{t}//W$. Let $\chi = \mfg \to \mfc$ be the quotient. It induces a map $L\mfg \to L\mathfrak{c}$, which we will also denote by $\chi$.

Let $L^\heartsuit \mfg \subset L^+ \mfg$ be the subscheme of topologically nilpotent, regular semisimple elements. We will use $L^\heartsuit( - ) $ to denote topologically nilpotent, regular semisimple elements in various arc spaces related to $\mfg$. 

For any $[w] \in \underline{W}$, let $(L^\heartsuit \mfg)_{[w]} \subset L\mfg$ be the subset of topologically nilpotent, regular semisimple elements of type $[w]$. Let $(L^\heartsuit\mfc)_{[w]} = \chi(L^\heartsuit\mfg)_{[w]}$ its image in $L\mfc$. Yun shows in \cite[Section 4]{yun2021minimal} that $(L^\heartsuit\mfc)_{[w]}$ is a scheme with reasonable properties. For example, it is locally closed (see \Cref{sect:arc_topology}) in $L^+\mfc$.

Let $\Delta \colon \mathfrak{t} \to \mathbb{A}^1$,
\begin{equation}\label{eqn:discriminant}
    \Delta(x) = \prod_{\alpha \in \Phi} \alpha(x)
\end{equation}
be the discriminant function. We extend this to a function on $\mfg$ by $\operatorname{Ad}(G)$-invariance, and then to a function on $L\mfg$ by extension of scalars.

Note that $\gamma \in L\mfg$ is conjugate to an element in $\mathfrak{t}\left(\CC((t^{1/m}))\right)$ after passing to the algebraic closure $\overline{\CC((t))} = \bigcup_{k \geq 1} \CC((t^{1/k}))$. Extending the $t$-adic valuation so that $t^{m/n} = \frac{m}{n}$, we can make sense of $\operatorname{val}(\Delta(\gamma))$. 

Define
\begin{equation}\label{eqn:springer_dimension}
\delta(\gamma) = \frac{\operatorname{val}(\Delta(\gamma)) - (\dim \mathfrak{t} - \dim \mathfrak{t}^w)}{2}
\end{equation}
where $\mathfrak{t}^w$ is the subspace where $w$ acts by eigenvalue 1. 

We now recall a definition due to Yun \cite[Definition 1.6]{yun2021minimal}:
\begin{definition}
    For $[w] \in W$, let $\delta_{[w]} = \min \{ \delta(\gamma) \mid \gamma \in (L^\heartsuit \mfg)_{[w]}\}$. An element $\gamma \in L^\heartsuit \mfg$ is called shallow of type $[w] \in \underline{W}$ if $\gamma$ is of type $[w]$, and $\delta(\gamma) = \delta_{[w]}$.
\end{definition}
We denote the set of shallow elements of type $[w]$ by $(L^\heartsuit \mfg)^{sh}_{[w]}$. Let $(L^\heartsuit \mfc)^{sh}_{[w]}$ be its image under $\chi$. Similarly, this is a locally closed subscheme of $L^+ \mfc$. 

Similarly, let $P \subset LG(\CC)$ be a parahoric subgroup, and $O \subset \operatorname{Lie}G_P$ be a nilpotent orbit. We denote by $(L^\heartsuit \mfc)_O^P $ the image of $ (O+P^+) \cap L^\heartsuit \mfg$ under $\chi$. Note that by \Cref{lem:parahoric_nilpotent}, every element in $O+P^+$ is already topologically nilpotent. Similarly, we define $(L^\heartsuit\mfc)_{\overline{O}}^P$ as the image of $(\overline{O} + P^+) \cap L^\heartsuit \mfg$. 

\subsection{Topological notions for arc spaces}\label{sect:arc_topology} Let $X$ be an affine scheme of finite type over $\CC$. Define, for any $n\geq 0$, $L_n^+ X$ to be the truncated arc space representing $R \mapsto X(R[[t]]/t^{n+1})$. Then $L^+X = \varprojlim_n L_n^+ X$. Let $\pi_n \colon L^+ X \to L_n^+ X $ be the projection. 

We call $Z \subset L^+X(\CC)$ ``fp-constructible'' (respectively, ``fp-open'', ``fp-irreducible'', etc) if there exist $n$ and $Z_n \subset L_n^+ X(\CC)$ such that $Z = \pi_n^{-1}(Z_n)$ (respectively, ``fp-open'', ``fp-irreducible'', etc).

If $Z$ is fp-constructible, define $\overline{Z} = \pi_n^{-1}(\overline{Z_n})$. We also define $\operatorname{codim}_{L^+X}(Z) = \operatorname{codim}_{L^+_n)}(Z_n)$. 

These definitions are independent of $n$ and $Z_n$. 

For $n\geq 0$, let $L_n X$ be the closed subfunctor of $LX$ classifying maps factoring through $t^{-n}\mathcal{O}$. Then $LX = \varinjlim_n L_n X$. If $Y \subset L_n X$ for some $n$, we may import the definitions of fp-constructible, fp-irreducible and so on by post-composing $X \to t^{-n} \mathcal{O}$ with multiplication by $t^n\colon t^{-n} \mathcal{O}\to \mathcal{O}$. This is independent of the choice of $n$.

\begin{lemma}\label{lem:KL_splitting_type}
    Let $P \subset LG$, and $O_P \subset \operatorname{Lie} G_P$ a nilpotent orbit. Then there exists a unique conjugacy class $[w] \in \underline{W}$ such that $(O_P\cap \mathfrak{n}_P) + \operatorname{Lie}P^+$ contains an open subset of regular semisimple elements of type $[w]$.
\end{lemma}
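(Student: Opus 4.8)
The plan is to reduce the assertion for $(O_P \cap \mathfrak{n}_P) + \operatorname{Lie}P^+$ to the analogous one for the larger, $\operatorname{Ad}(P)$-stable set $O_P + \operatorname{Lie}P^+$, and to pass information between the two using that $\operatorname{Ad}(P) \subset \operatorname{Ad}(LG)$ acts trivially on $L\mfc$. Here $O_P \subset \operatorname{Lie}G_P$ is regarded inside $\operatorname{Lie}P$ via the surjection $\operatorname{Lie}P \twoheadrightarrow \operatorname{Lie}P/\operatorname{Lie}P^+ = \operatorname{Lie}G_P$, so that $O_P + \operatorname{Lie}P^+$ is exactly the preimage of $O_P$; since $\operatorname{Lie}P^+$ is an $\cO$-lattice in $L\mfg$ and the fibres of this surjection are affine spaces over $\operatorname{Lie}P^+$, the preimage $O_P + \operatorname{Lie}P^+$ is fp-locally closed and fp-irreducible (as $O_P$ is a single orbit), and hence $Y := \chi(O_P + \operatorname{Lie}P^+) \subset L\mfc$ is irreducible.

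The first thing to check is that $O_P + \operatorname{Lie}P^+$ contains a regular semisimple element, so that a ``generic type'' is defined. Fix any $n_0 \in \operatorname{Lie}P$ lying over a point of $O_P$, a regular element $y$ of a Cartan $\mfh \subset \mfg$, and $N \gg 0$ with $t^N \mfg[[t]] \subseteq \operatorname{Lie}P^+$, and put $\gamma_\epsilon := n_0 + \epsilon\, t^N y \in O_P + \operatorname{Lie}P^+$. Because $\Delta$ is homogeneous of degree $|\Phi|$, one has $\epsilon^{-|\Phi|} \Delta(\gamma_\epsilon) = \Delta(\epsilon^{-1} n_0 + t^N y)$, which tends coefficientwise in $t$ to $\Delta(t^N y) = t^{N|\Phi|} \Delta(y) \neq 0$ as $\epsilon \to \infty$; hence $\Delta(\gamma_\epsilon) \neq 0$ for $\epsilon \gg 0$, so $\gamma_\epsilon$ is regular semisimple, and it is topologically nilpotent by \Cref{lem:parahoric_nilpotent}. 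Thus the locus of regular semisimple topologically nilpotent elements is fp-dense open in $O_P + \operatorname{Lie}P^+$. Since the type strata $(L^\heartsuit\mfc)_{[w']}$, $[w'] \in \underline{W}$, form a finite partition of $L^\heartsuit\mfc$ into locally closed subschemes \cite{yun2021minimal} (disjointness being the statement that the type of a regular semisimple element is determined by its image in $L\mfc$, cf.\ \cite[Lemma 1.2]{kazhdan1988fixed}), the irreducible set $Y \cap L^\heartsuit\mfc$ meets exactly one of them in a dense subset; let $[w]$ be the corresponding class.

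Now descend to $S := (O_P \cap \mathfrak{n}_P) + \operatorname{Lie}P^+$. Since $O_P \cap \mathfrak{n}_P = O_P \cap \mathfrak{b}_P$ is closed in $O_P$, the set $S$ is fp-closed in $O_P + \operatorname{Lie}P^+$, and its irreducible components $C_j$ are the preimages of the irreducible components $Z_j$ of $O_P \cap \mathfrak{n}_P$ (the orbital varieties of $O_P$). Each $Z_j$ is a nonempty subset of the single orbit $O_P$, so $\operatorname{Ad}(G_P) \cdot Z_j = O_P$, and lifting to $P$ (which normalises $P^+$ and covers $G_P$) gives $\operatorname{Ad}(P) \cdot C_j = O_P + \operatorname{Lie}P^+$; since $\chi$ is constant on $\operatorname{Ad}(P)$-orbits, $\chi(C_j) = \chi(O_P + \operatorname{Lie}P^+) = Y$ for every $j$. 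Consequently $\chi \colon C_j \to Y$ is dominant and $C_j$ is irreducible, so the generic point of $C_j$ lies over that of $Y$ and therefore has type $[w]$; that is, $C_j \cap \chi^{-1}\!\big((L^\heartsuit\mfc)_{[w]}\big)$ is fp-dense open in $C_j$ (being fp-locally closed and dense), and all its points are regular semisimple of type $[w]$. Removing $\bigcup_{i \neq j} C_i$ shows that $S$ contains an fp-open set of regular semisimple elements of type $[w]$, which gives existence. For uniqueness, any fp-open subset of $S$ consisting of regular semisimple elements of a type $[w']$ meets some $C_j$ in a nonempty fp-open set, which meets the fp-dense type-$[w]$ locus of $C_j$; a point common to both has type $[w'] = [w]$.

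I expect the subtle part to be the second paragraph: producing the regular semisimple element — handled above, although the choice of $n_0$ and of $N$ needs a little care with the Moy--Prasad filtration defining $P$, especially when $P \not\subset L^+G$ — and, more essentially, the input from \cite{yun2021minimal} that $L^\heartsuit\mfc$ decomposes into finitely many locally closed type strata, which is exactly what makes ``generic type'' meaningful. The rest (the fp-topology bookkeeping, identifying the $Z_j$ with orbital varieties, and the closedness of $O_P \cap \mathfrak{n}_P$ in $O_P$) is routine.
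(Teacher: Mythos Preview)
Your argument is correct, and the overall shape is the same as the paper's: both rest on (i) fp-irreducibility of a suitable set so that a generic Cartan type is defined, and (ii) the fact that the orbital-variety components of $O_P\cap\mathfrak n_P$ are related by $P$-conjugation, so that the type is the same for all of them. The difference is in how these ingredients are combined.

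The paper works directly with a single component $E\subset O_P\cap\mathfrak n_P$: it shows by an explicit root-space calculation that $t\,(E+\operatorname{Lie}P^+)\subset L^+\mfg$ is fp-constructible and fp-irreducible, and then invokes \cite[Section~6, Corollary]{kazhdan1988fixed} to obtain the unique generic type; independence of the component is immediate from Jantzen's result that the components are $P$-conjugate. You instead pass first to the ambient fp-irreducible set $O_P+\operatorname{Lie}P^+$, establish its generic type via Yun's locally closed type stratification of $L^\heartsuit\mfc$, and then descend by observing that $\chi(C_j)=\chi(O_P+\operatorname{Lie}P^+)$ for every component $C_j$, since $\operatorname{Ad}(P)\cdot C_j=O_P+\operatorname{Lie}P^+$ and $\chi$ is $\operatorname{Ad}(P)$-invariant. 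Your route avoids the coordinate computation checking fp-irreducibility of $E+\operatorname{Lie}P^+$, at the cost of a short detour and an explicit existence argument for a regular semisimple element (the paper absorbs this into the Kazhdan--Lusztig corollary). Note also that the paper handles the issue you flag about $\operatorname{Lie}P\not\subset L^+\mfg$ simply by multiplying by $t$ before checking fp-constructibility; you can do the same rather than appealing to Moy--Prasad filtrations.
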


\begin{proof}[Proof of \Cref{lem:KL_splitting_type}]

By \cite[Section 10.11]{jantzen2004nilpotent}, every irreducible component of $O_P \cap \mathfrak{n}_P$ has the same dimension. Moreover, if $E \subset (O_P \cap \mathfrak{n}_P)$ and $E'$ are irreducible components, the sets $E + \operatorname{Lie}P^+$ and $E' + \operatorname{Lie}P^+$ are $P$-conjugate. Therefore, it suffices to prove the statement for some $E+ \operatorname{Lie}P^+$.

Let $Y = t( E + \operatorname{Lie}P^+)$. Then $Y \subset L^+G$. Since $O_P \cap \mathfrak{n}_P$ is a Zariski constructible set in $\operatorname{Lie} G_P$, it is defined by equations $\{f_i = 0 , g_i \neq 0\}$. Then it is clear that these equations, along with equations defining $\operatorname{Lie} P^+$, define a set $Y_4 \subset \mfg(\CC[t]/(t^4))$ such that $Y = \pi_4^{-1} (Y_4)$, where $\pi_4\colon L^+\mfg \to \mfg(\CC[t]/(t^4))$ is the projection. This shows that $Y$ is fp-constructible.

Let $x \in \mathbb{X}_*(T)\otimes \RR$ be a point in the compact spherical alcove corresponding to $P$. Note that $tE$ is contained in the subspace of roots $\alpha \in \Phi$ such that $\alpha(x) = 0$ , $\alpha (x) =1$ or $\alpha(x) = -1$. Let $X'$ be the subspace of the remaining root vectors, and $X$ be the subspace of $X'$ cut out by $e_\alpha \in t\mfg$ if $\alpha \in \Phi^+$, $e_\alpha \in t^2 \mfg$ otherwise. Then $Y_4$ is the product, as schemes over $\CC$ of $tE$, $X'$ and $t^2 \mathfrak{t}$. Each of these is irreducible: $tE$ is by assumption, and $X'$ and $t^2 \mathfrak{t}$ are isomorphic to affine spaces. Therefore $Y_4$ is irreducible.

By definition, we have shown that $Y$ is fp-irreducible. Thus the lemma follows from \cite[Section 6, Corollary]{kazhdan1988fixed}.
\end{proof}

\section{The map $\widetilde{\operatorname{AV}}$}

Keep the notations from the introduction. We drop the assumption that $G$ is simply connected. Let $\pi_1(G) = \mathbb{X}_*(T)/\ZZ\Phi^\vee$ be the algebraic fundamental group of $G$.

\subsection{Definition of $\widetilde{\operatorname{AV}}$} Since $G$ is no longer assumed to be simply connected, $\Fl$ may no longer be connected. However, its connected components are all isomorphic. Moreover, if $G\to G'$ is a central isogeny, then 
\begin{center}
\begin{tikzcd}
    (\Fl_G)_{\operatorname{red}} \arrow[d] \arrow[r]&(\Fl_{G'})_{\operatorname{red}} \arrow[d]\\
    \pi_1(G)\arrow[r] & \pi_1(G')
\end{tikzcd}
\end{center}
commutes, and $(\Fl_G)^\circ = \Fl_{G^{sc}}$ where $G^{sc}$ is the simply connected form of $G$. 

The isomorphism $\widetilde{W} \cong W_{\operatorname{aff}} \rtimes \pi_1(G)$ presents $\widetilde{W}$ as a quasi-Coxeter group. Recall the Iwahori decomposition
\[
\Fl = \bigsqcup_{w \in \widetilde{W}} IwI/I = \bigsqcup_{\omega \in \pi_1(G)} \bigsqcup_{w' \in W_{\operatorname{aff}}} I(\omega, w') I /I.
\]
\begin{lemma}\label{lem:w_aff_only}
The subset $\pi SS(IC_{(\omega, w')})$ depends only on $w' \in W_{\operatorname{aff}}$.
\end{lemma}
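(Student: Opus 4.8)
The plan to prove \Cref{lem:w_aff_only} is to produce, for each $\omega\in\pi_1(G)$, an automorphism of $\Fl$ that permutes the Iwahori strata by right translation by the length-zero element $\tau_\omega\in\widetilde W$ attached to $\omega$, and then to check that the symplectomorphism it induces on $T^*\Fl$ commutes with $\pi\colon T^*\Fl\to L\mfg$. Granting this, functoriality of singular support gives $\pi SS(IC_{w\tau_\omega})=\pi SS(IC_w)$ for every $w\in\widetilde W$; since for a fixed $w'\in W_{\operatorname{aff}}$ the elements $(\omega,w')\in\widetilde W$ form a single right coset of the subgroup of length-zero elements, the lemma follows. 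Thus the entire content is the equality
\[
\pi SS(IC_{w\tau}) = \pi SS(IC_w)\qquad\text{for all }w\in\widetilde W\text{ and all length-zero }\tau\in\widetilde W .
\]

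First I would fix a representative $n_\tau\in LG(\CC)$ of $\tau$ that normalizes $I$; this is a standard fact about length-zero elements. Since $I^+$ is the pro-unipotent radical of $I$, hence characteristic in $I$, the element $n_\tau$ also normalizes $I^+$, so $\operatorname{Ad}(n_\tau)$ preserves $\operatorname{Lie} I^+\subset L\mfg$. Right translation $R_{n_\tau}\colon gI\mapsto gn_\tau I$ is then a well-defined automorphism of $\Fl$, it commutes with the left $LG$-action, and it carries $IwI/I$ isomorphically onto $IwIn_\tau/I=I(w\tau)I/I$; as $\ell(w\tau)=\ell(w)$ this gives $(R_{n_\tau})_*IC_w\cong IC_{w\tau}$.

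The key step is to identify the induced symplectomorphism $\widetilde R_{n_\tau}$ of $T^*\Fl$, and it suffices to do so over the base point $eI$ since $\widetilde R_{n_\tau}$ is $LG$-equivariant and $LG$ acts transitively on $\Fl$. There the differential $dR_{n_\tau}\colon T_{eI}\Fl\to T_{n_\tau I}\Fl$ is the identity under the canonical identifications $T_{eI}\Fl=L\mfg/\operatorname{Lie} I=L\mfg/\operatorname{Ad}(n_\tau)\operatorname{Lie} I=T_{n_\tau I}\Fl$, and dually $\widetilde R_{n_\tau}$ is the identity under $T^*_{eI}\Fl=\operatorname{Lie} I^+=\operatorname{Ad}(n_\tau)\operatorname{Lie} I^+=T^*_{n_\tau I}\Fl$. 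As the restriction of $\pi$ to each cotangent fiber is simply the inclusion of that fiber, a subspace of $L\mfg$, into $L\mfg$, this shows $\pi\circ\widetilde R_{n_\tau}=\pi$ over $eI$, hence everywhere. Finally, functoriality of singular support for the isomorphism $R_{n_\tau}$ gives $SS(IC_{w\tau})=\widetilde R_{n_\tau}(SS(IC_w))$, and applying $\pi$ concludes.

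I do not expect a real obstacle here: the argument is essentially bookkeeping with the cotangent identification already fixed in the affine setup above, together with the standard existence of $I$-normalizing lifts of length-zero elements. The two points that need care are that it really is \emph{right} translation that preserves $\pi SS$ — left translation only preserves it up to $\operatorname{Ad}(LG)$-conjugacy, as one already sees in rank one — and, more pedantically, the compatibility of $R_{n_\tau}$ with the sheaf-theoretic formalism so that the functoriality statement $SS((R_{n_\tau})_*\mathcal{F})=\widetilde R_{n_\tau}(SS(\mathcal{F}))$ is legitimate in the ind-pro-scheme setting in which $SS(IC_w)$ and $\pi$ were defined.
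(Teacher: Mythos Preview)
Your argument is correct and is essentially a detailed unpacking of the paper's one-line proof. The paper just says ``Follows from discussion above,'' referring to the fact that the connected components of $\Fl$ are all isomorphic; you have made this isomorphism explicit as right translation by a lift $n_\tau$ of a length-zero element and verified carefully that the induced symplectomorphism of $T^*\Fl$ commutes with $\pi$, which is exactly the content needed.
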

\begin{proof}
    Follows from discussion above.
\end{proof}

\begin{lemma}\label{lem:sing-supp-constant}
The subset $\pi SS(IC_w)$ is constant on the two-sided cells of $W_{\operatorname{aff}}$.
\end{lemma}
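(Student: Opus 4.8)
The plan is to reduce the statement to the known fact that singular support of IC sheaves on the affine flag variety is controlled by the left and right action of the affine Hecke algebra, together with the compatibility of this action with Kazhdan–Lusztig cells. First I would recall that, just as in the finite case, the singular support $SS(IC_w)$ is a union of conormal varieties $\overline{T^*_{IvI/I}\Fl}$ over certain $v \leq w$, and that the multiplicity pattern — equivalently, which conormals appear — is detected by the $*$- and $!$-stalks of $IC_w$ along the strata, i.e. by the Kazhdan–Lusztig polynomials $P_{v,w}$. The key structural input is that convolution on the left or right by the standard generators $T_s$ (pushforward along the partial flag variety $\Fl \to \Fl^{P_s}$ and back) does not enlarge $\pi SS(-)$ beyond the union of the $\pi SS$ of the constituents: this is because the projection $p_s\colon \Fl \to \Fl^{P_s}$ is proper with one-dimensional fibers, so $p_s^* p_{s*}$ preserves the relevant category and acts on singular supports in a way expressible through the $W_{\operatorname{aff}}$-combinatorics.

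Next I would invoke the definition of the two-sided cell: $w \sim_{LR} w'$ means each of $IC_w$, $IC_{w'}$ appears (up to shift) as a summand of an iterated convolution $IC_{s_1} * \cdots * IC_{s_k} * IC_{w'} * IC_{t_1} * \cdots * IC_{t_l}$ and vice versa. Therefore it suffices to show two things: (i) $\pi SS(IC_w)$ is unchanged — or at least not enlarged — under left or right convolution by an $IC_s$, when we pass to the largest orbit / the generic type as in \Cref{thm:intro_irred}; and (ii) the operation is symmetric enough that ``not enlarged in both directions'' forces equality. For (i), the point is that convolution with $IC_s$ corresponds geometrically to the correspondence $\Fl \xleftarrow{} \Fl \times_{\Fl^{P_s}} \Fl \xrightarrow{} \Fl$, and on cotangent bundles this is a Lagrangian correspondence whose effect on $\pi SS$ is again governed by $W_{\operatorname{aff}}$ and does not change the generic conjugacy class of the Cartan subalgebra into which $\pi SS$ projects; concretely, the set of regular semisimple topologically nilpotent elements that appears generically is stable because adding the extra $\PP^1$-direction in the fiber only modifies the non-regular-semisimple locus.

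Then, combining (i) with Lemma \ref{lem:w_aff_only} (to reduce from $\widetilde W$ to $W_{\operatorname{aff}}$) and with \Cref{thm:intro_irred} (to extract the well-defined generic type), I would conclude: if $w \sim_{LR} w'$, then $\pi SS(IC_w)$ and $\pi SS(IC_{w'})$ have the same generic regular semisimple topologically nilpotent locus, hence the same $LG$-conjugacy type of Cartan, hence $\widetilde{\operatorname{AV}}(w) = \widetilde{\operatorname{AV}}(w')$; since the lemma is stated at the level of the set $\pi SS(IC_w)$ itself, I would upgrade this by noting that the full $\pi SS$ is the closure of its generic locus union lower-dimensional pieces, and the lower-dimensional pieces are themselves indexed by smaller two-sided cells in a way compatible with convolution, so an induction on cells closes the argument.

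The main obstacle I expect is step (i): making precise the claim that left/right convolution with $IC_s$ does not change the generic regular semisimple topologically nilpotent locus of $\pi SS$. In the finite-dimensional setting this is Barbasch–Vogan's computation of $\operatorname{AV}$ via the Weyl group, but here $T^*\Fl = LG \times_I \operatorname{Lie}I^+$ is infinite-dimensional and one cannot directly quote a finite result. I anticipate that the honest proof uses Yun's global Springer theory (affine Springer fibers over the $\delta$-regular locus) to identify the generic constituents, rather than a naive cotangent-bundle manipulation, and that the symmetry needed for (ii) comes from the fact that convolution with $IC_s$ on a single side generates an equivalence of derived categories up to semisimple summands, so no genuine loss occurs in either direction.
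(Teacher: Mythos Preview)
Your high-level outline matches the paper's: reduce via the definition of two-sided cells to showing that if $IC_x$ is a summand of $IC_s \star IC_{x'}$ for a simple reflection $s$, then $\pi SS(IC_x) \subset \pi SS(IC_{x'})$, and conclude equality by symmetry of the cell relation. The gap is in step (i). You treat it as the hard part, reach for generic loci, a vague ``adding the extra $\PP^1$-direction only modifies the non-regular-semisimple locus'' heuristic, and then global Springer theory; none of this is needed, and the heuristic is not a valid argument as stated. The paper's proof of (i) is a two-line singular-support computation.

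Concretely: with the convolution diagram $\Fl \times \Fl \xleftarrow{p} LG \times \Fl \xrightarrow{q} LG \times^I \Fl \xrightarrow{m} \Fl$, the Kashiwara--Schapira bound for proper pushforward gives
\[
\pi\, SS(IC_s \star IC_{x'}) \subset \pi\, m_\pi\, (m')^{-1}\bigl(SS(IC_s) \times SS(IC_{x'})\bigr),
\]
where $m'$, $m_\pi$ are the induced correspondence maps on cotangent bundles. Since the differential of $m$ is addition on tangent spaces, its transpose $m'$ is the diagonal $v \mapsto (v,v)$; under the identification of fibers of $T^*\Fl$ with $\operatorname{Ad}(g)(\operatorname{Lie} I^+)$ this immediately yields $\pi SS(IC_s \star IC_{x'}) \subset \pi SS(IC_{x'})$ as \emph{sets}. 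That is the whole argument: no passage to a generic Cartan type, no appeal to \Cref{thm:intro_irred}, and no induction on cells. Your anticipated infinite-dimensional obstacle is a red herring --- the bound and the computation of $m'$ are local and take place on finite-type Schubert varieties --- and the ``upgrade'' from generic type to full set is unnecessary because the set-level containment is what one gets directly.
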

\begin{proof}
By definition, $x \leq_{LR} y$ if and only if there is a sequence $x = x_0, \dots , x_n = y$ such that either $x_{i-1} \leq_L x_i$ or $x_{i-1} \leq_R x_i$. The left order $\leq_L$ is generated by relations of the form: $x \leq_L x'$ if and only if there exists $s \in S$ such that $C_x$ appear s in $C_s C_{x'}$. Here $S$ is the set of simple affine reflections of $W_{\operatorname{aff}}$ determined by our choice of simple roots, and $C_x$ are the Kazhdan-Lusztig basis of the Hecke algebra. By definition, $x \leq_R x'$ if $x'^{-1} \leq_L x^{-1}$.

Therefore, it is enough to prove the following: if $C_x$ appears in $C_s C_{x'}$, then $\pi SS(\operatorname{IC}_x) \subset \pi SS(\operatorname{IC}_{x'})$ (and the similar statement for the $\leq_R$ relation).

Let $x$, $s$, $x'$ be as above. From the Kazhdan-Lusztig conjectures (now a theorem), $\operatorname{IC}_x$ is a direct summand of $\operatorname{IC}_s \star \operatorname{IC}_{x'}$ (perhaps with a shift). Here $\star$ is the convolution operation on the affine flag variety. Therefore, $SS(\operatorname{IC}_x) \subset SS(\operatorname{IC}_s \star \operatorname{IC}_{x'})$. It is then enough to show that $\pi SS(\operatorname{IC}_s \star \operatorname{IC}_{x'}) \subset \pi SS(\operatorname{IC}_{x'})$. 

The convolution diagram is
\begin{center}
\begin{tikzcd}
    \operatorname{Fl} \times \operatorname{Fl}  &\arrow[l,"p",swap] LG \times \operatorname{Fl}\arrow[r,"q"] & LG \times^I \operatorname{Fl} \arrow[r,"m"]& \operatorname{Fl}.
\end{tikzcd}
\end{center}
Define $\operatorname{IC}_s \tilde\boxtimes \operatorname{IC}_{x'} = (q*)^{-1}(p^* \operatorname{IC}_s \boxtimes \operatorname{IC}_{x'})$. Then $\operatorname{IC}_s \star \operatorname{IC}_{x'} = m_*(\operatorname{IC}_s \tilde\boxtimes \operatorname{IC}_{x'})$. Consider the correspondence diagram
\begin{center}
\begin{tikzcd}
T^*(LG \times^I \operatorname{Fl} ) &\arrow[l,"m'",swap] T^* Fl \times_{Fl} (LG \times^I \operatorname{Fl}) \arrow[r,"m_\pi"] & T^*\operatorname{Fl}.
\end{tikzcd}
\end{center}
According to \cite[Proposition 5.4.1]{kashiwara_sheaves_1990}, $SS(\operatorname{IC}_s \tilde\boxtimes \operatorname{IC}_{x'}) \subset SS(\operatorname{IC}_s )\times SS(\operatorname{IC}_{x'})$, and since $m$ is proper,
\begin{align*}
\pi SS \operatorname{IC}_s \star \operatorname{IC}_{x'} \subset \pi m_\pi m'^{-1} SS(\operatorname{IC}_s )\times SS(\operatorname{IC}_{x'}).
\end{align*}
The differential $dm$ is the addition map on tangent spaces. Its dual, therefore, sends $l \in T^*\operatorname{Fl}$ to the functional $(v,w) \mapsto l(v+w)$. Under our identification of the fibers of $T^*\operatorname{Fl}$ with $\operatorname{Ad}(g)(\operatorname{Lie} I^+)$, $m'$ is the map sending $v \mapsto (v,v)$. Therefore, the equation above imples $\pi SS \operatorname{IC}_s \star \operatorname{IC}_{x'} \subset \pi SS\operatorname{IC}_{x'}$. This finishes the proof for the left order relation, and the right order relation is handled in the same way.
\end{proof}

Let $P \subset LG(\CC)$ be a parahoric subgroup. Recall that there is an exact sequence
\[
1 \to P^+ \to P \to G_P \to 1
\]
We refer to $G_P = P/P^+$ as the Levi quotient. It is a reductive group with Weyl group $W_P$.

\begin{theorem}\label{thm:AV_construction}
\begin{enumerate}
    \item The set $V_w \coloneqq \pi SS(IC_w) \subset L\mfg$ is contained in $t^{-1} L^+\mfg$. It is the union of finitely many fp-irreducible and fp-constructible subsets.
    \item There is a nonempty fp-open subset $U \subset V_w$ and a conjugacy class $[w] \in \underline{W}$ such that for any $x\in U$, $x$ is regular semisimple topologically nilpotent in a Cartan subalgebra of $L\mfg$ of type $[w]$.
    \item The class $[w]$ does not depend on the choice of $U$.
\end{enumerate}
\end{theorem}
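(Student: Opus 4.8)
The plan is to treat the three assertions in turn; (1) and the existence part of (2) are essentially formal consequences of microlocal geometry together with Yun's work on the strata $(L^\heartsuit\mfc)_{[w']}$, while (3) is the genuine content.

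\textbf{Part (1).} Since $IC_w$ is supported on the finite-dimensional Schubert variety $\overline{IwI/I}$, which is stratified by the finitely many Iwahori cells $IvI/I$ with $v\le w$, and $IC_w$ is constructible for this stratification, $SS(IC_w)$ is a closed conic Lagrangian contained in the union of the closures of the conormal bundles $T^*_{IvI/I}\Fl$. First I would compute these conormals: using the residue-pairing identification (under which $\operatorname{Ann}(\operatorname{Lie}I)=\operatorname{Lie}I^+$), the conormal fiber over $\dot vI$ is $\operatorname{Lie}I^+\cap\operatorname{Ad}(\dot v)\operatorname{Lie}I^+$, and over $i\dot vI$ it is its $\operatorname{Ad}(i)$-image. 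Because $I\subset L^+G$, $\operatorname{Ad}(I)$ preserves the $t$-adic filtration on $L\mfg$, and in particular $\operatorname{Ad}(I)\operatorname{Lie}I^+=\operatorname{Lie}I^+$; hence $\pi$ of each conormal bundle, and of its closure, lies in $\operatorname{Lie}I^+\subseteq t^{-1}L^+\mfg$, so $V_w\subseteq t^{-1}L^+\mfg$. Writing $V_w=\bigcup_v\pi\!\left(\overline{T^*_{IvI/I}\Fl}\right)$, each summand is the continuous image of an irreducible set, hence fp-irreducible; it is fp-constructible because, after truncating modulo $t^N$ with $N\gg0$ (at which point the relevant conormal fibers stabilize), the image becomes the image of a morphism of finite-type $\CC$-schemes, so Chevalley's theorem applies.

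\textbf{Part (2), existence.} The locus $\{\Delta\ne0\}$ is fp-open in $L\mfg$, so $V_w\cap\{\Delta\ne0\}$ is fp-open in $V_w$. It is nonempty: $V_w$ contains $\pi\!\left(\overline{T^*_{IwI/I}\Fl}\right)$, which by the computation above contains a deep lattice $t^NL^+\mfg$, and $t^NL^+\mfg$ contains regular semisimple elements such as $t^Nh$ with $h\in\mathfrak{t}$ regular. As $V_w\subseteq\operatorname{Lie}I^+\subset L^+\mfg$, every element of $V_w$ has nilpotent reduction modulo $t$, hence is topologically nilpotent, so $V_w\cap\{\Delta\ne0\}\subseteq L^\heartsuit\mfg$. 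Now choose an fp-irreducible component $Z$ of $V_w$ with $Z\cap L^\heartsuit\mfg\ne\emptyset$. By Yun's result that each $(L^\heartsuit\mfc)_{[w']}$ is locally closed, the map $\gamma\mapsto(\text{type of }\gamma)$ is fp-constructible on $L^\heartsuit\mfg$; restricted to the fp-irreducible $Z$ it is constant, equal to some $[w]$, on a dense fp-open subset, and shrinking this to avoid the remaining components of $V_w$ produces the required $U$.

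\textbf{Part (3), independence --- the hard part.} Any nonempty fp-open $U\subseteq V_w$ on which the type is constant is dense in one fp-irreducible component $Z$ of $V_w$ and forces the constant value to be the generic type of $Z$; so (3) amounts to showing that all fp-irreducible components of $V_w$ meeting $L^\heartsuit\mfg$ share the same generic type. The cleanest route would be to prove $V_w$ itself fp-irreducible --- concretely, that $V_w=\overline{\operatorname{Ad}(I)\!\left(\operatorname{Lie}I^+\cap\operatorname{Ad}(\dot w)\operatorname{Lie}I^+\right)}$, so that the components attached to smaller cells are absorbed into the open-cell component. This is the exact affine analog of the classical irreducibility of $\overline O\cap\mathfrak{n}$; I would try to deduce it from Yun's structure theory for the strata $(L^\heartsuit\mfc)_{[w']}$ and their closures, perhaps after using \Cref{lem:KL_splitting_type} to recognize the open-cell component as the $\operatorname{Ad}(I)$-saturation of a set $(O_P\cap\mathfrak{n}_P)+\operatorname{Lie}P^+$ for a suitable parahoric $P$ and nilpotent $O_P$. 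The fallback --- and the approach matching the methods announced in the paper --- is microlocal: for $\gamma\in L^\heartsuit\mfg$, membership $\gamma\in V_w$ is detected by the non-vanishing of a Morse/vanishing-cycle group of $IC_w$ at $\gamma$, which is computed by the cohomology of the affine Springer fiber $\Fl_\gamma$ with coefficients in $IC_w$; as $\gamma$ ranges over the type strata these cohomologies assemble, through Yun's parabolic Hitchin fibration and the resulting Springer $W$-action, into the $W$-representations attached to the two-sided cell of $w$, and one identifies the set of classes $[w']$ occurring generically in $V_w$ with a single one via $\operatorname{KL}_{G^\vee}$ and the Barbasch-Vogan dictionary. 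I expect precisely this step --- collapsing the generic type of $V_w$ to one conjugacy class, equivalently the irreducibility above --- to be the main obstacle; the rest is a formal unwinding of the microlocal geometry and Yun's local-closedness.
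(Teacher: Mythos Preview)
Your attack on parts (1) and (2) is essentially sound, but your plan for (3) misses the paper's main idea and leaves a real gap.

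The paper does \emph{not} attempt to prove fp-irreducibility of $V_w$ for a general $w$, nor does it invoke any affine Springer or Hitchin machinery for this theorem. Instead it reverses the logical order you have in mind: it first proves, independently (\Cref{lem:sing-supp-constant}), that the set $V_w=\pi SS(IC_w)$ is constant on two-sided cells, using only the convolution description of cell order and the standard singular-support estimate $SS(m_*\mathcal F)\subset m_\pi m'^{-1}SS(\mathcal F)$. It then uses Lusztig's theorem that every two-sided cell of $W_{\operatorname{aff}}$ meets some finite parahoric $W_P$ to \emph{replace} $w$ by an element of $W_P$. For such $w$, the Schubert variety sits inside the finite flag variety $P/I\cong G_P/B_{G_P}$, so $IC_w=i_*IC_{w,P}$, and the Kashiwara--Schapira formula for $SS(i_*-)$ together with $\ker di=\operatorname{Lie}P^+$ gives the exact identity
\[
V_w=(\overline{O}\cap\mathfrak{n}_P)+\operatorname{Lie}P^+,
\]
with $O$ the special nilpotent orbit in $\mfg_P$ attached to the cell of $w$ in $W_P$ by the classical Barbasch--Vogan theorem. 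At this point parts (2) and (3) are immediate from \Cref{lem:KL_splitting_type}: the irreducible components of $\overline{O}\cap\mathfrak{n}_P$ are mutually $B_P$-conjugate, so the corresponding fp-irreducible pieces of $V_w$ are $P$-conjugate and share the same generic Cartan type.

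So the step you flagged as ``the main obstacle'' is bypassed entirely by the cell-invariance reduction; the heavy Springer/Hitchin machinery you proposed as a fallback is what the paper uses for the \emph{later} theorems (\Cref{thm_main}, \Cref{thm_KL}), not here. Your direct conormal computations in (1) are correct but give only an upper bound $V_w\subset\bigcup_{v\le w}\pi(\overline{T^*_{IvI/I}\Fl})$, not the explicit formula above, and without that formula your proposed routes to (3) remain incomplete.
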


For the notion of fp-open, fp-irreducible etc, see \Cref{sect:arc_topology}.

\begin{proof}
According to \cite{lusztig_cells_1989}, every two sided cell of $W_{\operatorname{aff}}$ intersects some finite parahoric Weyl subgroup (generated by a subset of $\Sigma_{\operatorname{aff}}$). By \Cref{lem:sing-supp-constant}, we may assume that $w$ lies in some finite parahoric subgroup $W_P$ corresponding to a standard parahoric subgroup $P \subset LG(\CC)$. Let $F_{G_P} = P/I \cong G_P / B_{G_P}$ be the classical (finite) flag variety of $G_P$. Let $i\colon F_{G,P} \hookrightarrow \Fl$ be the inclusion induced by $P \hookrightarrow LG$.

We have a diagram
\begin{center}
\begin{tikzcd}
T^*F_{G_P} \arrow[d]& T^*\operatorname{Fl}\arrow[d]\arrow[l,"di"]\\
F_{G_P}\arrow[r, hook,"i"]& \operatorname{Fl}.
\end{tikzcd}
\end{center}
Then the closure of $IwI/I$ is contained in the closed subset $P/I = F_{G_P}$. Since $IC_w$ is supported on the $P/I$, it is equal to $i_*IC_{w,P}$, where $\operatorname{IC}_{w,P}$ denotes the IC sheaf viewed in $D^b(F_{G_P})$.

By \Cref{lem:ker_di}, the kernel of $di$ is $\operatorname{Lie} P^+$ at every point. By \cite[Proposition 5.4.4]{kashiwara_sheaves_1990},
\[
V_w = \operatorname{Lie} P^+ + \operatorname{AV}_{G_P}(w) = \operatorname{Lie}P^+ + (\overline{O}\cap \mathfrak{n}_P),
\]
where $O \subset \operatorname{Lie} G_P$ is a nilpotent orbit (see \Cref{subsect:intro_classical}).

This description settles assertion (1) of the theorem. Then the rest of the assertions in the theorem follow from \Cref{lem:KL_splitting_type}.
\end{proof}

\begin{lemma}\label{lem:ker_di}
    The kernel of $di\colon T^*\Fl \to T^* F_{G_P}$ over any point $pI/I \in P/I$ is identified with $\operatorname{Lie} P^+$. 
\end{lemma}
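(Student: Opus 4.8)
The statement is local on $F_{G_P} = P/I$, and by $P$-equivariance it suffices to check the claim at the base point $eI/I$. I would first unwind the two identifications of cotangent spaces in play. On the affine side, the excerpt has fixed the identification of the fiber of $T^*\Fl$ over $gI$ with $\operatorname{Ad}(g)(\operatorname{Lie} I^+)$, using the invariant pairing $\operatorname{res}\langle-,-\rangle\tfrac{dt}{t}$ on $L\mfg$; at $eI$ this fiber is simply $\operatorname{Lie} I^+$, the prounipotent radical of $I$. On the classical side, the fiber of $T^*F_{G_P}$ over the base point $eI/I \in G_P/B_{G_P}$ is, under the Killing form on $\operatorname{Lie} G_P$, identified with the nilradical $\mathfrak{n}_P = \operatorname{Lie}(B_{G_P}^+)$ of the Borel in the Levi quotient $G_P = P/P^+$.

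Next I would identify the map $di$ concretely. The inclusion $i\colon F_{G_P}\hookrightarrow \Fl$ is induced by $P\hookrightarrow LG$ composed with the quotient $P \twoheadrightarrow G_P$; at the level of tangent spaces at the base point, $di^\vee\colon T_{eI}F_{G_P} \to T_{eI}\Fl$ is the differential of $P/I \to LG/I$, i.e. the natural inclusion $\operatorname{Lie} P/\operatorname{Lie} I \hookrightarrow L\mfg/\operatorname{Lie} I$. Dualizing with respect to the two invariant forms, $di\colon \operatorname{Lie} I^+ \to \mathfrak{n}_P$ is the map induced by the projection $\operatorname{Lie} P \twoheadrightarrow \operatorname{Lie} G_P = \operatorname{Lie} P/\operatorname{Lie} P^+$, after noting that $\operatorname{Lie} I^+ \subset \operatorname{Lie} P$ (since $I^+ \subset I \subset P$) and that the image of $\operatorname{Lie} I^+$ in $\operatorname{Lie} G_P$ is exactly the nilradical $\mathfrak{n}_P$ of $B_{G_P} = I/P^+$. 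Hence the kernel of $di$ at the base point is $\operatorname{Lie} I^+ \cap \operatorname{Lie} P^+ = \operatorname{Lie} P^+$, the last equality because $P^+ \subset I^+$ (the prounipotent radical of any parahoric contains the prounipotent radical $I^+$ of the Iwahori? — more precisely, $\operatorname{Lie} P^+ \subset \operatorname{Lie} I^+$ since $P^+$ is prounipotent and contained in $I$, so its Lie algebra lies in the prounipotent radical of $I$). I would make this containment precise using the description of $\operatorname{Lie} I^+$ and $\operatorname{Lie} P^+$ in terms of affine root spaces: picking a point $x$ in the alcove for $P$, $\operatorname{Lie} I^+$ is the span of affine root spaces with positive value, and $\operatorname{Lie} P^+$ is the span of those affine root spaces whose value is strictly positive on the whole facet of $x$, which is a subset of the former.

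The main obstacle is purely bookkeeping: matching up the two invariant bilinear forms (the residue pairing on $L\mfg$ versus the Killing form on $\operatorname{Lie} G_P$) so that the dual of the tangent inclusion really is the stated projection, and keeping track of which affine root spaces land where under $\operatorname{Lie} P \twoheadrightarrow \operatorname{Lie} G_P$. Once the affine-root-space descriptions of $\operatorname{Lie} I^+$, $\operatorname{Lie} P^+$, $\operatorname{Lie} P$ are in hand, the kernel computation $\ker(di) = \operatorname{Lie} P^+$ at $eI/I$ is immediate, and $P$-equivariance (the diagram in the excerpt is $P$-equivariant, and $P$ acts transitively on $F_{G_P}$) propagates it to every point $pI/I$.
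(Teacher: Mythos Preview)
Your proposal is correct and follows essentially the same approach as the paper: reduce to the base point, identify the cotangent fibers as $\operatorname{Lie} I^+$ and $\mathfrak{n}_P$ via the invariant form, compute the kernel using the affine root-space descriptions of $\operatorname{Lie} I^+$ and $\operatorname{Lie} P^+$ attached to a point $x$ in the alcove, and then propagate by $P$-equivariance (the paper phrases this last step as ``$P^+$ is normal in $P$''). Your momentary uncertainty about the containment is resolved correctly: for $I\subset P$ one has $P^+\subset I^+\subset I\subset P$, so $\operatorname{Lie} P^+\subset \operatorname{Lie} I^+$ and the intersection is $\operatorname{Lie} P^+$.
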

\begin{proof}
    For a root $\alpha$ of $G$, let $\mfg_\alpha$ be the corresponding root subspace. Let $x \in \mathbb{X}_*(T) \otimes \RR$, $|x|=1$ be a point in the compact spherical alcove corresponding to $P$. We may assume that $x$ is in the fundamental Weyl chamber, so that $\alpha(x) \geq 0$ for every $\alpha \in \Phi^+$. Then $\operatorname{Lie}P$ is generated by
    \[
    \mathfrak{t}(\mathcal{O}) \text{ and }  \mfg_\alpha(t^{n_\alpha}\mathcal{O}) \text{ with }\alpha(x) + n_\alpha \geq 0.
    \]
    Note that $I$ corresponds to the point $x_I \in \mathbb{X}_*(T) \otimes \RR$ such that $\alpha(x) >0$ for all $\alpha \in \Phi^+$. 
    
    Therefore, we identify the fiber of $T^*F_{G_P}$ over $I/I$ with the subspace generated by
    \[
    \mfg_\alpha(t^{-1}\mathcal{O}) \text{ for } \alpha\in\Phi^+\text{, } \alpha(x)=1 \text{ and } \mfg_\alpha(\mathcal{O}) \text{ for } \alpha \notin\Phi^+ \text{, }\alpha(x) =0.
    \]
    The roots appearing are precisely the negative roots for $G_P$, using the pinning given by the maximal torus $T$, and Borel subgroup $I/P^+$. 

    Since the fiber of $T^*\Fl$ over $I/I$ is $\operatorname{Lie}I^+$, we conclude that the kernel of $di$ is generated by
    \[
    \mathfrak{t}(t\mathcal{O}) \text{ and }  \mfg_\alpha(t^{n_\alpha}\mathcal{O}) \text{ with }\alpha(x) + n_\alpha > 0.
    \]
    which is exactly $\operatorname{Lie} P^+$.

    Since $P^+$ is normal in $P$, the lemma follows.
\end{proof}

We now arrive at the central definition of the paper.
\begin{definition}
    Let $C \in \operatorname{Cells}(W_{\operatorname{aff}})$. Choose $w \in C$, and let $[w]$ be associated to $V_w$ as in \Cref{thm:AV_construction}. The assignment
    \[
    \widetilde{\operatorname{AV}} \colon C \mapsto [w]
    \]
    is independent of the choice of $w \in C$.
\end{definition}

\subsection{The main theorem} \label{sect:main_thm}

We first recall the definitions of the (other) maps appearing in \Cref{thm:intro_diagram}.

\subsubsection{Lusztig's map $\underline{c}$} Let $u \in G^\vee$ be unipotent, and $s \in Z_{G^\vee}(u)$. Let $\rho$ be a simple, admissible module for $A\coloneqq Z_{G^\vee}(su)/Z_{G^\vee}^\circ (su)$. Define
\[
\mathcal{K}_{u,s,\rho} = \Hom_A (\rho, K_0^{\langle s \rangle \times \mathbb{G}_m }(F_u)\otimes \CC).
\]
Here, $F_u$ is the Springer fiber of $u$, and $K_0^{\langle s \rangle \times \mathbb{G}_m }$ represents equivariant $K$-theory of coherent sheaves.

Let $H = \ZZ[v,v^{-1}][\widetilde{T_w}]$ denote the affine Hecke algebra associated to $W_{\operatorname{aff}}$. Note that this is the affine Weyl group of $G$. Let $J$ be the corresponding asymptotic Hecke algebra. It is an associative algebra over $\CC$, equipped with a map
\begin{equation}\label{eqn:H_to_J}
H \to J \otimes \ZZ[v,v^{-1}].
\end{equation}
It is a direct sum of algebras
\begin{equation}\label{eqn:J_direct_sum}
    J = \bigoplus_{c \in \operatorname{Cells}(W_{\operatorname{aff}})} J_{c}.
\end{equation}
Then $\mathcal{K}_{u,s,\rho}$ carries the structure of a $H$-module. On the other hand, any $J$ module carries the structure of a $H$-module due to \eqref{eqn:H_to_J}. In \cite[Theorem 4.2]{lusztig_cells_1989}, Lusztig demonstrates that this induces a bijection
\[
\left\{\text{$G^\vee$-conjugacy classes of triples $(u,s,\rho)$}\right\} \leftrightarrow \left\{\text{Simple $J$-modules}\right\}.
\]
Let $E$ be a simple $J$-module. Due to \eqref{eqn:J_direct_sum}, there exists a unique 2-sided cell $c$ such that $J_c$ does not act by 0 on $E$. This gives a map
\[
\left\{\text{$G^\vee$-conjugacy classes of triples $(u,s,\rho)$}\right\} \to \left\{\text{2-sided cells of $W_{\operatorname{aff}}$} \right\}.
\]
Lusztig then shows that this map depends only on the conjugacy class of $u \in G^\vee$. Let us denote the two-sided cell obtained in this manner by $\underline{c}(u)$. Lusztig then proves that $\underline{c}\colon \mathcal{N}_{G^\vee} \to \operatorname{Cells}(W_{\operatorname{aff}})$ is a bijection.

\subsubsection{The Kazhdan-Lusztig map $\operatorname{KL}_G$} Let us recall that an element $\gamma \in L\mfg$ is said to be topologically nilpotent if $\operatorname{ad}(\gamma)^r \to 0$ as $r\to \infty$ in $\End L\mfg$ (in the $t$-adic topology).

Let $e \in \mathcal{N}$. In \cite[Section 9]{kazhdan1988fixed}, Kazhdan and Lusztig show that there exists $[w] \in W$ such that a generic element in $e + L^+\mfg \subset L\mfg$ is topologically nilpotent and regular semisimple of type $[w] \in \underline{W}$. Here, generic means that there is a dense open subset of such elements. Then $\operatorname{KL}_G(e) = [w]$. This assignment only depends on the nilpotent orbit containing $e$.

More generally, for any parahoric subgroup $P$, if $e_P \in \mathcal{N}_{G_P}$, one can make the same definition (using \Cref{lem:KL_splitting_type}): there exists $[w] \in \underline{W}$ such that a generic element in the set $e_P + P^+$ is regular semisimple of type $[w]$.
\begin{lemma}\label{lem:parahoric_nilpotent}
    Any element $\widetilde{e_P} \in e_P + P^+$ is topologically nilpotent.
\end{lemma}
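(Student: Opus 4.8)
The plan is to reduce to an explicit computation in a root-space decomposition adapted to the parahoric $P$, using the fact that topological nilpotence of $\gamma \in L\mfg$ can be detected after passing to $L\mathfrak{c}$, or equivalently by checking that all the coefficients of the characteristic polynomial of $\operatorname{ad}(\gamma)$ lie in $t\cO$ (so that the only eigenvalue ``at $t=0$'' is $0$). Concretely, choose $x \in \mathbb{X}_*(T)\otimes\RR$ in the closed spherical alcove attached to $P$ (say in the fundamental Weyl chamber, so $\alpha(x)\ge 0$ for $\alpha\in\Phi^+$), and recall from the proof of \Cref{lem:ker_di} that $\operatorname{Lie}P$ is spanned by $\mathfrak{t}(\cO)$ together with $\mfg_\alpha(t^{n_\alpha}\cO)$ for $\alpha(x)+n_\alpha\ge 0$, while $\operatorname{Lie}P^+$ is spanned by $\mathfrak{t}(t\cO)$ and $\mfg_\alpha(t^{n_\alpha}\cO)$ for $\alpha(x)+n_\alpha>0$. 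The Levi quotient $G_P$ has root system $\Phi_P = \{\alpha\in\Phi : \alpha(x)\in\ZZ\}$, and the nilpotent cone $\mathcal{N}_{G_P}\subset \operatorname{Lie}G_P$ consists of elements whose $\Phi_P$-component is a nilpotent element of the reductive Lie algebra $\operatorname{Lie}G_P$ and whose $\mathfrak{t}$-component (the ``central'' part) vanishes.

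First I would set up a grading. Conjugating by the cocharacter-like element ``$t^x$'' (working over $\CC((t^{1/N}))$ for $N$ clearing denominators of $x$), one can arrange that $e_P$ becomes a nilpotent element of $\mfg$ placed in degree $0$ with respect to the filtration by $\alpha(x)+n_\alpha$, and that $P^+$ becomes the sum of strictly positive graded pieces $\bigoplus_{j>0}\mfg_{(j)}$ together with $t\mathfrak{t}$-type terms. Then $\widetilde{e_P}=e_P+p^+$ with $p^+$ in strictly positive degree. The key point is that $\operatorname{ad}(\widetilde{e_P})$ is a sum of an operator of degree $0$ which is $\operatorname{ad}$ of a nilpotent element of the finite-dimensional reductive $\operatorname{Lie}G_P$, plus operators of strictly positive degree. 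Since the degree-$0$ part is nilpotent on the (finite-dimensional) degree-$0$ space and raises degree nowhere, and all other contributions strictly raise the $t$-adic-weighted degree, a bounded number of applications of $\operatorname{ad}(\widetilde{e_P})$ either increases the weighted degree by a positive amount or is killed by nilpotence of the degree-$0$ part; iterating, $\operatorname{ad}(\widetilde{e_P})^r$ has all matrix entries in $t^{\lfloor cr\rfloor}\cO$ for some $c>0$, hence tends to $0$ in $\End L\mfg$.

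The step I expect to be the main obstacle is handling the interaction between the ``nilpotent in degree $0$'' direction and the ``strictly positive degree'' directions cleanly — i.e.\ making precise that no cancellation can produce a term of degree $\le 0$ and that the nilpotency exponent of $\operatorname{ad}(e_P)$ on $\operatorname{Lie}G_P$ combined with the positivity of the other terms forces uniform $t$-adic decay. I would organize this by introducing an auxiliary $\ZZ$-grading (or $\RR$-grading) on $L\mfg$ in which $\operatorname{ad}(\widetilde{e_P})=D_0+D_{>0}$ with $D_0$ of weight $0$ and nilpotent on each weight space and $D_{>0}$ of strictly positive weight, and then running an elementary estimate: in any product of $r$ factors each equal to $D_0$ or a component of $D_{>0}$, if fewer than $\dim\operatorname{Lie}G_P$ of the factors are from $D_{>0}$ then a string of $D_0$'s of length $\ge \dim\operatorname{Lie}G_P$ occurs and the term vanishes, so at least $\sim r/\dim\operatorname{Lie}G_P$ of the factors raise the weight, giving weight $\gtrsim r$. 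Since weight is (a positive multiple of) $t$-adic valuation after undoing the $t^x$-conjugation, this yields $\operatorname{ad}(\widetilde{e_P})^r\to 0$. As a sanity check this specializes correctly to the case $P=I$: there $\operatorname{Lie}G_P=\mathfrak{t}$, $e_P=0$, and every element of $\operatorname{Lie}I^+$ is visibly topologically nilpotent, recovering the classical statement used implicitly in \cite{kazhdan1988fixed}.
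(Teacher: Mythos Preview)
Your argument is correct in outline, but takes a different route from the paper. The paper quotes \cite[Lemma~2.1]{kazhdan1988fixed}, which says $\widetilde{e_P}$ is topologically nilpotent iff it is $LG$-conjugate to some $\gamma\in L^+\mfg$ with $\gamma\pmod t$ nilpotent in $\mfg$. It then conjugates by an element of $P$ so that $e_P$ lands in the nilradical $\mathfrak{n}_P$ of a Borel of $G_P$, and observes from the explicit root-space description that $\mathfrak{n}_P+\operatorname{Lie}P^+$ reduces mod $t$ into $\mathfrak{n}_G$, which is visibly nilpotent. This stays over $\CC((t))$ and is only a few lines.

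Your approach instead passes to $\CC((t^{1/N}))$, conjugates by $t^x$ to make $e_P$ a constant nilpotent element of $\mfg$, and then runs a direct filtered estimate on $\operatorname{ad}(\widetilde{e_P})^r$. This is self-contained---you are effectively reproving the easy direction of \cite[Lemma~2.1]{kazhdan1988fixed} after base change---at the cost of being longer and of needing the (standard) fact that a nilpotent element of the pseudo-Levi $\mfg_P$ remains nilpotent in $\mfg$. One small slip: the nilpotency bound you invoke should be the index of $\operatorname{ad}(e_P)$ acting on $\mfg$, not on $\operatorname{Lie}G_P$; the weight spaces of your grading for $w\notin\ZZ$ are not copies of $\mfg_P$, so $\dim\operatorname{Lie}G_P$ is not the right constant. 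Replacing it by $\dim\mfg$ (or any uniform bound) fixes this without affecting the structure of your argument.
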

\begin{proof}
    By \cite[Lemma 2.1]{kazhdan1988fixed}, $\widetilde{e_P}$ is topologically nilpotent if and only if it is $LG$-conjugate to an element $\gamma \in L^+G$ such that $\gamma \pmod t \in \mfg$ is nilpotent. 

    Let $x \in \mathbb{X}_*(T)\otimes \RR$ correspond to $P$. Let $\beta$ be the highest root of $G$. Let
    \[
    S_P = \{\alpha_i \mid \alpha_i(x) = 0\} \cup (\{-\beta\} \text{ if } \beta(x) =1)\subset \Phi.
    \]
    These are the gradients of the basic affine roots vanishing at $x$. They form a set of simple roots for $G_P$. Let $\mathfrak{n}_P$ denote the nilpotent subalgebra of $\mfg_P$ determined by the positive root system corresponding to $S_P$. Then
    \[
    \mathfrak{n}_P \subset \bigoplus_{\substack{\alpha \in \Phi^+\\ \alpha(x) = 0}} \mfg_\alpha(\mathcal{O}) \bigoplus_{\substack{\alpha \in \Phi^- \\\alpha(x) = -1}} \mfg_\alpha(t \mathcal{O})
    \]
    
    By conjugating by an element of $P$, we may assume that $e_P \in \mathfrak{n}_P$. Note that $P^+ \subset I$, hence it is now clear that $\widetilde{e_P} \pmod{t} \in \mathfrak{n}_G$.  
\end{proof}
We will denote $\operatorname{KL}_G^P(e) = [w]$.

This paper is dedicated to proving
\begin{theorem}\label{thm_main}
    The diagram
    \begin{center}
    \begin{tikzcd}
    \operatorname{Cells}(W_{\operatorname{aff}})\arrow[rr, "\underline{c}" ,leftrightarrow] \arrow[dr,"\widetilde{\operatorname{AV}}"]& & \underline{\mathcal{N}_{G^\vee}}\\
    & \underline{W}.\arrow[ur, hookleftarrow , "\operatorname{KL}_{G^\vee}",sloped] &
    \end{tikzcd}
    \end{center}
    commutes.
\end{theorem}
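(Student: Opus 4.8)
The plan is to reduce the commutativity of the triangle to a statement about a single two-sided cell $C$, then exploit the fact (used already in \Cref{thm:AV_construction}) that $C$ meets a finite parabolic subgroup $W_P$ of $W_{\operatorname{aff}}$ coming from a standard parahoric $P \subset LG(\CC)$. Fix such a $C$ and a representative $w \in C \cap W_P$. By \Cref{thm:AV_construction} we have $V_w = \operatorname{Lie}P^+ + (\overline{O}\cap \mathfrak{n}_P)$ for a nilpotent orbit $O \subset \operatorname{Lie}G_P$, and $\widetilde{\operatorname{AV}}(C)$ is the splitting type $[w] \in \underline{W}$ of a generic element of this set. First I would unwind, using \Cref{lem:KL_splitting_type} and \Cref{lem:parahoric_nilpotent}, that this generic splitting type is exactly $\operatorname{KL}_{G_P}^P(O_{\max})$ where $O_{\max}$ is the largest orbit occurring, i.e. $O$ itself; in other words $\widetilde{\operatorname{AV}}(C) = \operatorname{KL}_{G^\vee}^{P^\vee}(\,\cdot\,)$ applied to the nilpotent orbit of $G_P$ attached to $C$. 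So the content becomes: the orbit $O \subset \operatorname{Lie}G_P$ cut out by singular supports of IC sheaves on $\Fl$, restricted to the finite flag variety $F_{G_P}$, matches up — after passing through $\underline{c}$ and the Langlands-dual bookkeeping — with the nilpotent orbit in $G^\vee$ that Lusztig's asymptotic-Hecke-algebra construction assigns to $C$.

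The heart of the argument is to identify $O$ in representation-theoretic terms on the \emph{dual} side. Here is where I would bring in the classical diagram \eqref{diagram:finite}: for the reductive group $G_P$, Barbasch--Vogan/Lusztig tell us $\operatorname{AV}_{G_P}(w) = \overline{O}\cap \mathfrak{n}_P$ with $O$ \emph{special}, and $O = \operatorname{Spr}_{G_P}$ of the special cell representation attached to the two-sided cell of $w$ in $W_P$. Meanwhile the cell $C$ of $W_{\operatorname{aff}}$ and the cell of $w$ in $W_P$ are linked by Lusztig's results in \cite{lusztig_cells_1989} on how cells of the affine Weyl group restrict to (or are induced from) parabolic subgroups, together with the compatibility of the asymptotic algebra decomposition \eqref{eqn:J_direct_sum} with truncated induction. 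The key input I expect to need — and this is essentially Lusztig's theorem identifying $J_c$-modules with the equivariant $K$-theory $\mathcal{K}_{u,s,\rho}$ of Springer fibers — is that $\underline{c}(C)$ is the $G^\vee$-orbit whose Springer fiber carries the $J_C$-module matching the special cell representation of $W_P$ under the truncated-induction/Springer dictionary. Combining this with the fact that $\operatorname{KL}_{G^\vee}$ is characterized by \cite[Section 9]{kazhdan1988fixed} through exactly the same Springer-fiber $K$-theory (the ``generic topologically nilpotent element in $e+L^+\mfg$'' has splitting type governed by the $W$-representation $\operatorname{Spr}(e)$, by the main results of \cite{kazhdan1988fixed}) should close the loop: both $\widetilde{\operatorname{AV}}(C)$ and $\operatorname{KL}_{G^\vee}(\underline{c}(C))$ equal the conjugacy class in $W$ attached, via the Springer correspondence and \cite{kazhdan1988fixed}, to the same special representation.

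There is a subtlety I want to flag and handle carefully: the classical picture lives on $G$ (and $G_P$), while $\underline{c}$ and the target of the triangle live on $G^\vee$. The reconciliation is that $W$, $\underline{W}$, and the combinatorics of cells of $W_{\operatorname{aff}}$ are the \emph{same} for $G$ and $G^\vee$, and that Lusztig's $\underline{c}$ is built from $G^\vee$-side Springer theory precisely so that it is Langlands-self-matching with the $G$-side two-sided cell data; I would spell out that the special orbit $O \subset \operatorname{Lie}G_P$ on the $G$-side corresponds under duality to the $G_P^\vee$-orbit, and that \cite[Section 9]{kazhdan1988fixed} for $G^\vee$ is computed by Springer fibers of $G^\vee$, so no mismatch of dual groups survives.

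The main obstacle I anticipate is the precise bookkeeping of Lusztig's cell-restriction/induction statement: showing that the two-sided cell of $w$ inside $W_P$ (which controls the classical $\operatorname{AV}_{G_P}$, hence $O$) is compatibly related to the two-sided cell $C$ of $w$ inside $W_{\operatorname{aff}}$ (which controls $\underline{c}(C)$), in a way that commutes with the passage to $\mathcal{K}_{u,s,\rho}$ and with truncated induction of the relevant special representations. Granting that compatibility — which is the substantive use of \cite{lusztig_cells_1989} and of Yun's global Springer theory to pin down which Springer representations occur in the cohomology of the affine Springer fibers attached to the generic elements of $V_w$ — the rest is assembling the already-cited bijections in \eqref{diagram:finite} and \cite[Sections 6, 9]{kazhdan1988fixed}. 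I would therefore organize the proof as: (i) reduce to one cell and one parahoric; (ii) identify $\widetilde{\operatorname{AV}}(C)$ with $\operatorname{KL}_{G_P}^P(O)$ via \Cref{thm:AV_construction} and \Cref{lem:KL_splitting_type}; (iii) identify $O$ as the special orbit of the $W_P$-cell via \eqref{diagram:finite}; (iv) match the $W_P$-cell with $C$ and hence $O$ with $\underline{c}(C)$ via \cite{lusztig_cells_1989}; (v) conclude by comparing with the Springer-theoretic characterization of $\operatorname{KL}_{G^\vee}$, using the affine Springer fiber cohomology computations to justify step (iv).
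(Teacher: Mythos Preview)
Your reduction steps (i)--(iv) are essentially correct and coincide with the paper's \Cref{prop:equivalence}: one shows $\widetilde{\operatorname{AV}}(C)=\operatorname{KL}_G^P(\operatorname{Spr}_{G_P}E_P)$ from \Cref{thm:AV_construction}, and Lusztig \cite[6.5, 6.7]{lusztig_cells_1989} already gives $\underline{c}^{-1}(C)=\operatorname{Spr}_{G^\vee}(j_{W_P}^W E_P)$. So step (iv) needs no affine Springer input; it is pure Lusztig. The commutativity of the triangle is thus equivalent to the identity
\[
\operatorname{KL}_{G^\vee}\bigl(\operatorname{Spr}_{G^\vee} j_{W_P}^W E_P\bigr) \;=\; \operatorname{KL}_G^P\bigl(\operatorname{Spr}_{G_P} E_P\bigr),
\]
which is the paper's \Cref{thm_KL}.

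The genuine gap is in your step (v). You assert that $\operatorname{KL}_{G^\vee}$ is ``characterized by \cite[Section 9]{kazhdan1988fixed} through exactly the same Springer-fiber $K$-theory'' so that both sides are visibly the conjugacy class attached to the same special representation. No such characterization exists: $\operatorname{KL}_{G^\vee}$ is defined as the generic splitting type of $e+L^+\mfg^\vee$, and nothing in \cite{kazhdan1988fixed} computes it from $\operatorname{Spr}(e)$ alone. The left side lives on $G^\vee$ with its Iwahori, the right side on $G$ with the parahoric $P$; there is no formal duality that matches them (note $G_P^\vee$ is not a pseudo-Levi of $G^\vee$). This identity is the actual theorem, and its proof occupies Sections~5--7 of the paper: one first reduces, via arc-space closure and codimension arguments (\Cref{prop:closure}, \Cref{prop:springer_fiber_imply_thm}), to showing that for generic $\gamma\in(L^\heartsuit\mfc)^P_{O_P}$ the \emph{dual} affine Springer fiber $\Fl_\gamma^{G^\vee}$ has reduction type containing the orbit $\operatorname{Spr}_{G^\vee}(j_{W_P}^W E_P)$. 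That in turn is proved by exhibiting $j_{W_P}^W E_P$ inside $H^*(\Lambda\backslash\Fl_\gamma^{G^\vee})$, which requires transferring the $W$-module $\operatorname{Ind}_{W_P}^W E_P\subset H^*(\Lambda\backslash\Fl_\gamma)_{st}$ (\Cref{prop:ind_cohomolgy}) across to the $G^\vee$-side. The transfer is the substantive step: it uses Yun's Langlands duality for the stable parabolic Hitchin complex \cite{yun2012langlands} together with a global-to-local product formula, not cell combinatorics. Your proposal names the right toolkit but attaches it to the wrong step and does not supply the mechanism by which information passes from $G$ to $G^\vee$.
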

\begin{corollary}
    The map $\widetilde{AV}$ is injective.
\end{corollary}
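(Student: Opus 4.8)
The plan is to obtain the corollary as a formal consequence of \Cref{thm_main}. Reading off the two paths from $\operatorname{Cells}(W_{\operatorname{aff}})$ to $\underline{W}$ in the commutative triangle of \Cref{thm_main} yields the identity of maps $\widetilde{\operatorname{AV}} = \operatorname{KL}_{G^\vee}\circ \underline{c}$. By Lusztig's theorem recalled in \Cref{sect:main_thm}, $\underline{c}$ is a bijection between $\operatorname{Cells}(W_{\operatorname{aff}})$ and $\underline{\mathcal{N}_{G^\vee}}$; in particular it is injective. Since a composite of injective maps is injective, $\widetilde{\operatorname{AV}}$ is injective as soon as the Kazhdan--Lusztig map $\operatorname{KL}_{G^\vee}\colon \underline{\mathcal{N}_{G^\vee}}\to \underline{W}$ is injective. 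Equivalently, because $\underline{c}$ is bijective, the corollary is exactly the statement that $\operatorname{KL}_{G^\vee}$ is injective, transported across $\underline{c}$.

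So the only thing I would need to supply is the injectivity of $\operatorname{KL}_{G^\vee}$ --- the property encoded by the hooked arrow in the diagram. The cleanest route is to cite this directly from the construction in \cite[Section 9]{kazhdan1988fixed}. If a self-contained argument is wanted instead, I would recover a nilpotent orbit $e\in\mathcal{N}_{G^\vee}$ from $\operatorname{KL}_{G^\vee}(e)=[w]$ using the finer invariants of minimal reduction type \cite{yun2021minimal}: by \Cref{thm:AV_construction} and \Cref{lem:KL_splitting_type} a generic member of $e+L^{+}\mfg$ is shallow of type $[w]$, and the combinatorial package attached to this datum (the parahoric $P$ into whose Levi quotient the orbit reduces, together with the orbit $O\subset\operatorname{Lie}G_{P}$ and the associated $\delta$-invariant) pins down $e$; one then checks these invariants are matched across $\underline{c}$.

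The main --- and essentially the only --- obstacle is this appeal to injectivity of $\operatorname{KL}_{G^\vee}$; everything else is a diagram chase that uses nothing beyond \Cref{thm_main} and the known bijectivity of Lusztig's map $\underline{c}$. I would also record the converse observation: given \Cref{thm_main}, any independent proof that $\widetilde{\operatorname{AV}}$ is injective immediately upgrades to injectivity of the Kazhdan--Lusztig map, so the two statements are interchangeable in this context.
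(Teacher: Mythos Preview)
Your overall strategy matches the paper's: from \Cref{thm_main} one reads $\widetilde{\operatorname{AV}}=\operatorname{KL}_{G^\vee}\circ\underline{c}$, and since $\underline{c}$ is a bijection the corollary reduces to injectivity of $\operatorname{KL}_{G^\vee}$. The paper's proof is exactly this one-line reduction.

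The only defect is your citation for the injectivity of $\operatorname{KL}_{G^\vee}$. Section~9 of \cite{kazhdan1988fixed} \emph{defines} the map but does not prove it is injective; Kazhdan and Lusztig left this open, and the hooked arrow in the diagram is not justified there. The paper instead cites \cite[Theorem~1.11]{yun2021minimal}, where Yun constructs the minimal reduction type map $\operatorname{RT}_{\min,G^\vee}$ and shows it is a section of $\operatorname{KL}_{G^\vee}$, which immediately gives injectivity. Your alternative ``self-contained'' sketch via shallow elements and reduction types is heading in the direction of Yun's argument, but as written it is too vague to constitute a proof (and \Cref{thm:AV_construction}, \Cref{lem:KL_splitting_type} concern the $G$-side, not the recovery of $e$ from $[w]$ on the $G^\vee$-side). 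Simply replace the Kazhdan--Lusztig citation with the Yun reference and your argument is complete and identical to the paper's.
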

\begin{proof}
    This follows from \Cref{thm_main} and the fact that $\operatorname{KL}$ is injective \cite[Theorem 1.11]{yun2021minimal}.
\end{proof}

\begin{lemma}
    \begin{enumerate}
        \item Let $G \to G'$ be a finite isogeny. Then \Cref{thm_main} holds for $G$ if and only if it holds for $G'$.
        \item \Cref{thm_main} holds for $G$ if and only if it holds for every simple factor of $G$ (well-defined up to isogeny).
    \end{enumerate}
\end{lemma}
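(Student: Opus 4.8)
The plan is to observe that every vertex and every edge of the square in \Cref{thm_main} is built out of the root datum of $G$ and its Langlands dual, the affine Weyl group $W_{\operatorname{aff}}$, and the Lie algebras $\mfg$, $\mfg^\vee$, in a way that is insensitive to central isogenies and that factors through a direct product whenever $G$ does. Since commutativity of the square asserts the equality of two maps $\operatorname{Cells}(W_{\operatorname{aff}}) \to \underline{W}$, it suffices to check that each of the objects $\operatorname{Cells}(W_{\operatorname{aff}})$, $\underline{\mathcal N}_{G^\vee}$, $\underline W$, $\underline c$, $\operatorname{KL}_{G^\vee}$, $\widetilde{\operatorname{AV}}$ behaves as claimed. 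Throughout we may ignore a central torus, which contributes only a single trivial factor (one two-sided cell, the zero nilpotent orbit, etc.) to every object.

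For part (1), let $G \to G'$ be a finite isogeny. Then $\Phi$, $\Phi^\vee$, $W$, and hence $W_{\operatorname{aff}} = \ZZ\Phi^\vee \rtimes W$ and $\underline W$, are literally unchanged, so $\operatorname{Cells}(W_{\operatorname{aff}})$ is the same finite set on both sides. The Lie algebras satisfy $\mfg = \mfg'$ and $\mfg^\vee = \mfg'^\vee$, and nilpotent $\operatorname{Ad}(G^\vee)$-orbits in $\mfg^\vee$ depend only on the Lie algebra because the adjoint action factors through the adjoint group; hence $\underline{\mathcal N}_{G^\vee} = \underline{\mathcal N}_{G'^\vee}$ canonically. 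For $\widetilde{\operatorname{AV}}$, by \Cref{lem:w_aff_only} and the discussion preceding it the set $\pi SS(IC_{(\omega,w')})$ is computed on a single connected component $(\Fl_G)^\circ \cong \Fl_{G^{sc}}$ and depends only on $w' \in W_{\operatorname{aff}}$; since $G$ and $G'$ have the same simply connected cover and $\pi\colon T^*\Fl \to L\mfg$ agrees on $(\Fl_G)^\circ = (\Fl_{G'})^\circ = \Fl_{G^{sc}}$, the subset $V_{w'}$ and the resulting class, hence $\widetilde{\operatorname{AV}}$, coincide for $G$, $G'$ and $G^{sc}$. Finally $\underline c$ is built from Springer fibers in the dual group and the asymptotic Hecke algebra of $W_{\operatorname{aff}}$, and by \cite{lusztig_cells_1989} the resulting map $\underline{\mathcal N}_{G^\vee} \to \operatorname{Cells}(W_{\operatorname{aff}})$ depends only on the adjoint orbit of the unipotent element; while $\operatorname{KL}_{G^\vee}$ is defined purely via generic elements of $e + L^+\mfg^\vee$ together with the parametrization of conjugacy classes of Cartan subalgebras of $L\mfg^\vee$ by $\underline W$ (\cite[Lemma 1.2]{kazhdan1988fixed}). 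Both are therefore unchanged under the isogeny, so the square for $G$ is the square for $G'$.

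For part (2), using (1) we may assume $G = G_1 \times \cdots \times G_k$ with each $G_i$ almost simple. Then $\Fl_G = \prod_i \Fl_{G_i}$, $T^*\Fl_G = \prod_i T^*\Fl_{G_i}$, $L\mfg = \prod_i L\mfg_i$, $W_{\operatorname{aff}} = \prod_i W_{\operatorname{aff},i}$, and $G^\vee = \prod_i G_i^\vee$. For $w = (w_1,\dots,w_k)$ one has $IC_w = \boxtimes_i IC_{w_i}$, and the singular support of an external product is the product of the singular supports (\cite[Proposition 5.4.1]{kashiwara_sheaves_1990}), so $V_w = \prod_i V_{w_i}$; since a Cartan subalgebra of $L\mfg$ is a product of Cartan subalgebras of the $L\mfg_i$ and regular semisimplicity and topological nilpotence are checked componentwise, $\widetilde{\operatorname{AV}}_G(w) = (\widetilde{\operatorname{AV}}_{G_1}(w_1),\dots,\widetilde{\operatorname{AV}}_{G_k}(w_k))$. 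On the combinatorial side, the Hecke algebra, its Kazhdan--Lusztig basis, the $a$-function, the asymptotic algebra $J$ and its decomposition \eqref{eqn:J_direct_sum} are all compatible with direct products (Kazhdan--Lusztig polynomials of a product factor because lengths add, and the $a$-function is additive), so $\operatorname{Cells}(W_{\operatorname{aff}}) = \prod_i \operatorname{Cells}(W_{\operatorname{aff},i})$ and $\underline c_G = \prod_i \underline c_{G_i}$; likewise $\underline{\mathcal N}_{G^\vee} = \prod_i \underline{\mathcal N}_{G_i^\vee}$, a generic element of $e + L^+\mfg^\vee$ is a tuple of generic elements of the $e_i + L^+\mfg_i^\vee$, and $\operatorname{KL}_{G^\vee} = \prod_i \operatorname{KL}_{G_i^\vee}$. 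Hence the square for $G$ is the product of the squares for the $G_i$, and commutes if and only if each of them does.

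The main obstacle is not any isolated computation but the careful bookkeeping of how Lusztig's map $\underline c$ and the partition into two-sided cells behave under direct products and isogenies: one must confirm that the asymptotic Hecke algebra and the cell partition of a product decompose as tensor products (multiplicativity of structure constants and of Kazhdan--Lusztig polynomials, additivity of the $a$-function), and that the bijection of \cite{lusztig_cells_1989} between nilpotent orbits of the dual group and two-sided cells of $W_{\operatorname{aff}}$ is genuinely a function of the Lie algebra $\mfg^\vee$ and not of the chosen isogeny form of $G^\vee$. The remaining checks --- for singular supports of external products, for the identification of Cartan subalgebras with $\underline W$, and for the compatibility of $\widetilde{\operatorname{AV}}$ with passage to $G^{sc}$ and to direct factors --- are routine given the results already recalled above.
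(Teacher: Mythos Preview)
Your proposal is correct and follows the same approach as the paper: verify that each object and each arrow in the square of \Cref{thm_main} is unchanged under a central isogeny and factors through direct products, so that commutativity transfers. The paper's own proof is much terser---for part (1) it simply cites \Cref{lem:w_aff_only} together with \cite[4.10]{lusztig_cells_1989} (which handles the isogeny-invariance of $\underline{c}$), and for part (2) it merely observes that $G$ is isogenous to a torus times a product of simple groups and declares the product case ``clear''---but your expanded verification of each ingredient is exactly what underlies those citations.
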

\begin{proof}
    The first statement directly follows from \Cref{lem:w_aff_only} and \cite[4.10]{lusztig_cells_1989}.
    
    For the second, $G$ is isogenous to a product $A \times \prod_i G_i$ where $G_i$ are simple and $A$ is a torus. The theorem is vacuous for tori, and clearly holds for the product if it is true for each $G_i$. 
\end{proof}

In the rest of the paper, we therefore assume without loss of generality that $G$ is almost simple and simply connected.

\section{An identity of Kazhdan-Lusztig maps}\label{sect:reformulation}

In this section, we give a reformulation \Cref{thm_main} not involving the map $\widetilde{AV}$. First we recall a few more definitions due to Lusztig.

Let $E$ be an irreducible representation of $W$. To $E$, Lusztig \cite{lusztig1979class} attaches two integers $a_{E}$ and $b_{E}$, as follows. Let $D_E(q)$ be the generic degree polynomial associated to $E$. Then $D_E(q)= \gamma_E q^{a_E} + \text{ higher powers}$. The integer $b_E$ is defined to be the smallest nonnegative integer $i$ such that $E$ appears in the $i$th symmetric power of the reflection representation of $W$.

Let $O \subset \mfg$ be a nilpotent orbit. Let $E_O$ denote the corresponding irreducible representation of $W$ under the Springer correspondence. Lusztig observes \cite[2.1]{lusztig1979class} that $a_{E_O} \leq b_{E_O}$. When this is an equality, $E_O$ is called \textit{special}. The corresponding nilpotent orbit $O$ is also called \textit{special}.

Let $W_P \subset W_{\operatorname{aff}}$ be a parahoric subgroup. By definition, it is a finite reflection subgroup formed by a proper subset of $\Sigma_{\operatorname{aff}}$. The projection map $W_{\operatorname{aff}} \to W$ allows us to identify $W_P$ with its image in $W$, which we shall also denote by $W_P$ when the context is clear.

Let $E_P$ be a $W_P$-module. Let $E = \operatorname{Ind}_{W_P}^W E_P$. Then it is known (due to Macdonald-Lusztig-Spaltenstein, \cite[Theorem 5.2.6]{geck2000characters}) that for any $E'$ appearing in $E$, $b_{E'} \geq b_{E_P}$, and moreover, equality is achieved for a unique $E''$ (appearing with multiplicity 1). This theorem is attributed to Macdonald-Lusztig-Spaltenstein. We shall denote $j_{W_P}^W E_P \coloneqq E''$. 

If $E_P$ is a special $W_P$-module, Lusztig \cite[Theorem 1.5]{LUSZTIG20093418} shows that $j_{W_P}^W E_P$ is in the image of the Springer correspondence for $G^\vee$.

Recall that $\operatorname{Spr}$ refers to the Springer correspondence for the reductive group in the subscript. The secondary main theorem of this paper is
\begin{theorem}\label{thm_KL}
     For any parahoric subgroup $P \subset LG(\CC)$, and any special representation $E_P$ of $W_P$,
    \[
    \operatorname{KL}_{G^\vee} \left(\operatorname{Spr}_{G^\vee} j_{W_P}^W E_P\right) = \operatorname{KL}_G^P \left(\operatorname{Spr}_{G_P} E_P\right).
    \]
\end{theorem}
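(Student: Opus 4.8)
The plan is to rewrite both sides as the splitting type of a generic topologically nilpotent regular semisimple element of an explicit affine-linear subvariety of a loop algebra, and then to match these types using Yun's theory of minimal reduction type \cite{yun2021minimal} together with Lusztig's combinatorics of special representations and $j$-induction. First I would observe that $\operatorname{KL}_{G^\vee}$ is the parahoric Kazhdan--Lusztig map attached to the hyperspecial parahoric $L^+G^\vee$ (whose Levi quotient is $G^\vee$ and whose pro-unipotent radical has Lie algebra $tL^+\mfg^\vee$), so that by \Cref{lem:KL_splitting_type} \emph{both} sides of the identity are of the form $\operatorname{KL}^{P'}_{G'}(O')$ for a reductive $G'$, a parahoric $P'\subset LG'(\CC)$ and a nilpotent orbit $O'\subset \operatorname{Lie}G_{P'}$: the right-hand side with $(G',P',O')=(G,P,\operatorname{Spr}_{G_P}E_P)$, the left-hand side with $(G',P',O')=(G^\vee,L^+G^\vee,\operatorname{Spr}_{G^\vee}(j_{W_P}^W E_P))$. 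Thus it suffices to prove that a generic regular semisimple element of $\operatorname{Spr}_{G_P}E_P+\operatorname{Lie}P^+$ and a generic one of $\operatorname{Spr}_{G^\vee}(j_{W_P}^W E_P)+tL^+\mfg^\vee$ have the same type in $\underline{W}=\underline{W^\vee}$.

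Next I would compare the two generic elements through their numerical invariants. The image of each affine-linear stratum under the Chevalley quotient is a stratum of the form $(L^\heartsuit\mfc)^{P'}_{O'}$ (for the relevant group), and Yun's analysis identifies the valuation of the discriminant on it — equivalently the dimension $\delta$ of the associated affine Springer fiber — in terms of the Levi $G_{P'}$ and the orbit $O'$; in the hyperspecial case this is $a_{E_{O'}}$, the $a$-invariant of the Springer representation of $O'$, and there is a parahoric version. On the other hand the defining property of truncated induction (Macdonald--Lusztig--Spaltenstein, recalled above) is $b_{j_{W_P}^W E_P}=b_{E_P}$, and since $E_P$ and $j_{W_P}^W E_P$ are both special, Lusztig's equality $a=b$ for special representations turns this into $a_{j_{W_P}^W E_P}=a_{E_P}$. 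Propagating these identities through the two sides shows that the two generic elements have equal $\delta$-invariant, and — since the Springer representation records not only $a_{E}$ but the finer partition/ramification data that enter Yun's stratification of $L^\heartsuit\mfc$ — that they in fact lie over the same stratum, hence have the same type. Along the way one must check that the generic element of $O'+\operatorname{Lie}P'^+$ is \emph{shallow} of its type, so that the type can legitimately be read off from the minimal-reduction-type dictionary; this I would verify by matching Yun's dimension formula for affine Springer fibers against the generalized Springer correspondence for $G_{P'}$.

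I expect this last matching to be the main obstacle, and it is here, rather than in the bookkeeping of $a$- and $b$-invariants, that Langlands duality genuinely enters: the datum $(P,\operatorname{Spr}_{G_P}E_P)$ lives in $LG$ while $(L^+G^\vee,\operatorname{Spr}_{G^\vee}(j_{W_P}^W E_P))$ lives in $LG^\vee$, so they are not literally the same object and one must show that the two generic elements define the same cameral cover (the same cyclic ramification datum) and not merely invariants with the same numerology. I would attack this by invoking Lusztig's explicit description of the generalized Springer correspondence and of $j$-induction: first reducing, via transitivity of $j$-induction and compatibility of the parahoric $\operatorname{KL}$ maps with passage to (dual) Levi subgroups, to the case where the common type is elliptic, and then treating the elliptic case uniformly where possible and case-by-case in the exceptional types, using Yun's injectivity result \cite[Theorem 1.11]{yun2021minimal} to eliminate the residual ambiguity.
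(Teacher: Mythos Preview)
Your numerical setup is sound: the equality of $\delta$-invariants (equivalently, of codimensions in $L^\heartsuit\mfc$) does follow from $b_{j_{W_P}^W E_P}=b_{E_P}$ and specialness, and this is exactly what the paper uses in \Cref{prop:springer_fiber_imply_thm} to conclude $[w]=[w]_P$ \emph{once} one knows the two strata in $L^\heartsuit\mfc$ are nested. But your proposal for the remaining step --- the actual identification of the two types, not just their codimensions --- has a genuine gap.

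You correctly isolate the obstacle (``the two generic elements define the same cameral cover \ldots\ and not merely invariants with the same numerology'') but your plan of attack does not close it. First, the assertion that ``the Springer representation records \ldots\ the finer partition/ramification data that enter Yun's stratification'' is exactly what is at stake: there is no direct dictionary taking $E$ to a conjugacy class $[w]$ short of $\operatorname{KL}$ itself, and Yun's $\operatorname{RT}_{\min}$ goes the other way. Second, the ``compatibility of the parahoric $\operatorname{KL}$ maps with passage to (dual) Levi subgroups'' that you invoke for the reduction to the elliptic case is not available in the literature and is not obviously true; the parahoric $\operatorname{KL}^P_G$ is defined via a generic splitting type in $\operatorname{Lie}P^+$, and nothing says this factors through an intermediate Levi in a way compatible with $j$-induction. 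Third, and most seriously, your fallback of ``case-by-case in the exceptional types'' is circular: as the paper notes after \Cref{thm_KL}, the values of $\operatorname{KL}^P_G$ for exceptional $G$ and nonhyperspecial $P$ were \emph{not known} prior to this theorem, so there is no table against which to check.

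The paper avoids all of this by a completely different, uniform mechanism. Rather than comparing the two generic elements combinatorially, it fixes a single $\gamma$ lying over the $G$-side stratum and shows that the \emph{dual} affine Springer fiber $\Fl^{G^\vee}_\gamma$ has reduction type $O=\operatorname{Spr}_{G^\vee}(j_{W_P}^W E_P)$; by \Cref{prop:springer_fiber_imply_thm} this forces the inclusion of strata that, together with the codimension equality you already have, gives $[w]=[w]_P$. The reduction type of $\Fl^{G^\vee}_\gamma$ is detected via the $W$-module structure on $H^*(\Lambda\backslash\Fl_\gamma)_{st}$ (\Cref{prop:centralizer_invariant}, \Cref{prop:ind_cohomolgy}), and the passage from $G$ to $G^\vee$ is accomplished by globalizing to a parabolic Hitchin fiber and invoking Yun's Langlands duality $H^*(\mathcal{M}^{\operatorname{par}}_{a,x})_{st}\cong H^*(\mathcal{M}^{\operatorname{par},\vee}_{a,x})_{st}$ as $W$-modules (\Cref{prop:global_isom}). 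The final pinning-down of the representation uses Lusztig's \cite[Corollary~10.5]{lusztig1992unipotent}. In short, the duality is supplied geometrically by Ng\^o--Yun, not combinatorially; your outline would need an independent input of comparable strength to replace it.
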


\begin{proposition}\label{prop:equivalence}
    \Cref{thm_main} holds if and only if \Cref{thm_KL} holds.
\end{proposition}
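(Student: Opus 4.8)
The plan is to unwind both \Cref{thm_main} and \Cref{thm_KL} into the same family of equalities of conjugacy classes in $W$, indexed by pairs $(P, E_P)$ consisting of a parahoric $P \subset LG(\CC)$ and a special representation $E_P$ of $W_P$. First note that, since $\underline c$ is a bijection, \Cref{thm_main} is equivalent to the assertion that $\widetilde{\operatorname{AV}}(C) = \operatorname{KL}_{G^\vee}\big(\underline c^{-1}(C)\big)$ for every two-sided cell $C$ of $W_{\operatorname{aff}}$. Given a pair $(P, E_P)$, the classical commutative diagram of bijections recalled in \Cref{subsect:intro_classical}, applied to the reductive group $G_P$, identifies $E_P$ with a two-sided cell $c_P$ of $W_P$; since $\leq_{LR}$-relations within the parabolic subgroup $W_P$ remain $\leq_{LR}$-relations in $W_{\operatorname{aff}}$, the cell $c_P$ lies in a single two-sided cell $C = C(P, E_P)$ of $W_{\operatorname{aff}}$. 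The core of the proof is then to evaluate both sides of $\widetilde{\operatorname{AV}}(C) = \operatorname{KL}_{G^\vee}(\underline c^{-1}(C))$ in terms of $(P, E_P)$.

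For the left-hand side I would choose any $w \in c_P$. By \Cref{lem:sing-supp-constant}, $V_w \coloneqq \pi SS(IC_w)$ depends only on $C$, and by the argument of \Cref{thm:AV_construction} applied to $w$ (which lies in the finite parahoric $W_P$) we have $V_w = \operatorname{Lie}P^+ + (\overline O \cap \mathfrak n_P)$, where $O \subset \operatorname{Lie}G_P$ is the nilpotent orbit with $\overline O \cap \mathfrak n_P = \operatorname{AV}_{G_P}(w)$. Since $\operatorname{AV}_{G_P}$ is constant on two-sided cells of $W_P$ and fits into the classical diagram for $G_P$, we get $O = \operatorname{Spr}_{G_P}(E_P)$; then \Cref{lem:KL_splitting_type} together with the definition of $\operatorname{KL}_G^P$ yields $\widetilde{\operatorname{AV}}(C) = \operatorname{KL}_G^P\big(\operatorname{Spr}_{G_P}(E_P)\big)$. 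For the right-hand side I would invoke the identity $\operatorname{Spr}_{G^\vee}\big(j_{W_P}^W E_P\big) = \underline c^{-1}(C)$, which expresses the compatibility of Lusztig's parametrization of two-sided cells of $W_{\operatorname{aff}}$ by nilpotent orbits of $G^\vee$ with truncated induction from finite parabolic subgroups; it refines the statement, already recalled, that $j_{W_P}^W E_P$ lies in the image of $\operatorname{Spr}_{G^\vee}$ when $E_P$ is special, and is a theorem of Lusztig \cite{lusztig_cells_1989,LUSZTIG20093418}.

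Granting these two identities, \Cref{thm_KL} for $(P, E_P)$, namely $\operatorname{KL}_{G^\vee}(\operatorname{Spr}_{G^\vee} j_{W_P}^W E_P) = \operatorname{KL}_G^P(\operatorname{Spr}_{G_P} E_P)$, becomes exactly the equality $\operatorname{KL}_{G^\vee}(\underline c^{-1}(C)) = \widetilde{\operatorname{AV}}(C)$, i.e. \Cref{thm_main} for $C$. Since every pair $(P, E_P)$ produces a cell $C(P, E_P)$, and conversely every two-sided cell $C$ of $W_{\operatorname{aff}}$ equals $C(P, E_P)$ for some pair — because by \cite{lusztig_cells_1989} it meets a finite parahoric $W_P$, and any two-sided cell of $W_P$ contained in $C$ corresponds, via the classical diagram for $G_P$, to a special $W_P$-representation — the two indexed families of equalities coincide, and \Cref{thm_main} $\Leftrightarrow$ \Cref{thm_KL} follows in both directions: the implication from \Cref{thm_KL} uses, for a given $C$, only the single pair coming from a parahoric meeting $C$, while the reverse implication needs the two identities for every pair $(P,E_P)$.

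The step I expect to be the main obstacle is the right-hand-side identity $\operatorname{Spr}_{G^\vee}(j_{W_P}^W E_P) = \underline c^{-1}(C)$. In the literature this compatibility is most naturally formulated for a \emph{distinguished} choice of parahoric — one for which Lusztig's $\mathbf a$-function on $c_P$ inside $W_P$ agrees with its value on $C$ inside $W_{\operatorname{aff}}$ — so some care is needed to obtain the statement for an arbitrary $(P, E_P)$, for instance by verifying, using transitivity of $j$-induction through an intermediate parahoric, that $\operatorname{Spr}_{G^\vee}(j_{W_P}^W E_P)$ depends only on the cell $C$ and not on the choice of $(P, c_P)$ refining it. The left-hand-side identity, by contrast, is a formal consequence of \Cref{thm:AV_construction}, \Cref{lem:KL_splitting_type}, and the Barbasch–Vogan–Lusztig–Joseph–Schmid–Vilonen package applied to $G_P$, and should amount only to bookkeeping.
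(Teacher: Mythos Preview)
Your proposal is correct and follows essentially the same route as the paper: compute $\widetilde{\operatorname{AV}}(C)=\operatorname{KL}_G^P(\operatorname{Spr}_{G_P}E_P)$ via \Cref{thm:AV_construction} and Barbasch--Vogan for $G_P$, and identify $\underline c^{-1}(C)=\operatorname{Spr}_{G^\vee}(j_{W_P}^W E_P)$ by Lusztig's results, which the paper cites as \cite[6.5, 6.7]{lusztig_cells_1989}. Your worry about needing a ``distinguished'' parahoric is more cautious than the paper, which simply invokes Lusztig's statement for any $P$ with $W_P\cap C\neq\varnothing$; in practice that citation already covers the general case, so no transitivity-of-$j$ argument is needed.
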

\begin{proof}[Proof of \Cref{prop:equivalence}]
    Recall the notations in the proof of \Cref{thm:AV_construction}. Let $c$ be a two-sided cell of $W_{\operatorname{aff}}$, and $P$ a parahoric subgroup so that $W_P \cap W_{\operatorname{aff}}$ is nonempty. Let $w \in W_P \cap W_{\operatorname{aff}}$. Then $\widetilde{\operatorname{AV}}(c)$ is defined to be the Cartan type of a generic element in $V_w = \operatorname{Lie} P^+ + (\overline{O}\cap \mathfrak{n}_P)$. Since every irreducible component of $\overline{O} \cap \mathfrak{n}_P$ has the same dimension and intersects $O$ (see \cite[Section 10.11]{jantzen2004nilpotent}), the Cartan type of a generic element agrees with $\operatorname{KL}_G^P(O)$.

    On the other hand, it is known that $O$ is the special nilpotent orbit corresponding (by the Springer correspondence) to the special $W_P$-representation attached (in the sense of Lusztig) to the two-sided cell $c \cap W_P$ of $W_P$. In other words, writing $E_P$ for the $W_P$ representation attached to $c \cap W_P$, we have
    \[
    \widetilde{\operatorname{AV}}(c) = \operatorname{KL}_G^P \left(\operatorname{Spr}_{G_P} E_P\right).
    \]

    On the other hand, it is known \cite[6.5, 6.7]{lusztig_cells_1989} that the nilpotent orbit $\underline{c}^{-1}(c) \eqqcolon O^\vee \subset \mfg^\vee$ corresponds to the Springer representation $j_{W_P}^W E_P$. This is exactly the left hand side of the equation in \Cref{thm_KL}.
\end{proof}

\subsection{Geometric interpretation of $j$-induction}  

When $W_P$ is a reflection subgroup of $W$, i.e. the Weyl group of a Levi subgroup of $G$, Lusztig and Spaltenstein give a geometric description of truncated induction \cite{lusztig1979induced}, which we now recall.

Let $L \subset P \subset G$ be a Levi and a parabolic subgroup of $G$, respectively. Let $\mathfrak{u}_P$ be the unipotent radical of $\operatorname{Lie} P$. Let $O_L \subset \mathfrak{l}$ be a nilpotent orbit. Lusztig and Spaltenstein show that there is a unique nilpotent orbit $O \subset \mfg$ such that $O \cap (O_L + \mathfrak{u}_P)$ is dense in $O_L + \mathfrak{u}_P$. Moreover, $O$ is independent of the choice of $P$. Then $O = \operatorname{Ind}_L^G(O_L)$ is called the induced nilpotent orbit. Let $E_L$ correspond to $O_L$ under the Springer correspondence. They then show that $O$ corresponds to $j_{W_L}^W(E_L)$ under the Springer correspondence.

Note that this construction only works when $L = Z_G(s)$, $s \in \mfg$ semisimple. We now offer a generalization (restricted to special nilpotent orbits) to the ``pseudo-Levi'' case, $L = Z_G(s)$, $s \in G$ semisimple. Recall that Yun \cite[Theorem 1.11]{yun2021minimal} has defined a map $\operatorname{RT_{min,G^\vee}}\colon \underline{W} \to \underline{\mathcal{N}_{G^\vee}}$. The definition involves the geometry of affine Springer fibers and various related arc spaces. Moreover, he shows that $\operatorname{RT_{min,G^\vee}}$ is a section of $\operatorname{KL}_{G^\vee}$.

The following geometric interpretation of Lusztig's $j$-induction follows immediately from \Cref{thm_KL}.
\begin{corollary}
    The diagram
    \begin{center}
    \begin{tikzcd}
        \underline{\mathcal{N}_{G_P}}^{sp} \arrow[d, "\operatorname{Spr}", leftrightarrow] \arrow[rr, "\operatorname{RT_{min,G^\vee}}\operatorname{KL}_G^P"]& &\underline{\mathcal{N}_{G^\vee}} \arrow[d,"\operatorname{Spr}", leftrightarrow]\\
        \operatorname{Rep}^{sp}(W_P) \arrow[rr, "j_{W_P}^W"]& & \operatorname{Rep}(W)
    \end{tikzcd}
    \end{center}
    commutes.
\end{corollary}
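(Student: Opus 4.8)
The plan is to obtain the corollary as a formal consequence of \Cref{thm_KL}, so the argument is a short diagram chase powered by one algebraic fact about $\operatorname{RT_{min,G^\vee}}$. First I would fix a special orbit $O_P \in \underline{\mathcal{N}_{G_P}}^{sp}$ and let $E_P \in \operatorname{Rep}^{sp}(W_P)$ be the special representation with $\operatorname{Spr}_{G_P} E_P = O_P$; this is precisely the effect of the left vertical arrow. Since $E_P$ is special, Lusztig's theorem \cite[Theorem 1.5]{LUSZTIG20093418} (quoted just before \Cref{thm_KL}) guarantees that $j_{W_P}^W E_P$ lies in the image of the Springer correspondence for $G^\vee$, so $O^\vee \coloneqq \operatorname{Spr}_{G^\vee} j_{W_P}^W E_P \in \underline{\mathcal{N}_{G^\vee}}$ is well defined, and by construction it is the image of $O_P$ under the bottom-then-right composite in the square. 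Hence commutativity of the square is equivalent to the single identity $\operatorname{RT_{min,G^\vee}}\bigl(\operatorname{KL}_G^P(O_P)\bigr) = O^\vee$.

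Next I would feed this $E_P$ into \Cref{thm_KL}, which reads $\operatorname{KL}_{G^\vee}(O^\vee) = \operatorname{KL}_{G^\vee}\bigl(\operatorname{Spr}_{G^\vee} j_{W_P}^W E_P\bigr) = \operatorname{KL}_G^P\bigl(\operatorname{Spr}_{G_P} E_P\bigr) = \operatorname{KL}_G^P(O_P)$. Applying $\operatorname{RT_{min,G^\vee}}$ to the two ends gives $\operatorname{RT_{min,G^\vee}}\bigl(\operatorname{KL}_{G^\vee}(O^\vee)\bigr) = \operatorname{RT_{min,G^\vee}}\bigl(\operatorname{KL}_G^P(O_P)\bigr)$, so it only remains to identify the left-hand side with $O^\vee$. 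This is where \cite[Theorem 1.11]{yun2021minimal} enters: $\operatorname{RT_{min,G^\vee}}$ is a section of $\operatorname{KL}_{G^\vee}$ and $\operatorname{KL}_{G^\vee}$ is injective, and these two facts together force $\operatorname{RT_{min,G^\vee}} \circ \operatorname{KL}_{G^\vee} = \operatorname{id}$ on $\underline{\mathcal{N}_{G^\vee}}$ (for any orbit $O'$ one has $\operatorname{KL}_{G^\vee}\operatorname{RT_{min,G^\vee}}\operatorname{KL}_{G^\vee}(O') = \operatorname{KL}_{G^\vee}(O')$, and injectivity cancels the outer $\operatorname{KL}_{G^\vee}$). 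Therefore $O^\vee = \operatorname{RT_{min,G^\vee}}(\operatorname{KL}_G^P(O_P))$, which is exactly the required identity, and the corollary follows.

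I do not expect a genuine obstacle: the entire mathematical content sits inside \Cref{thm_KL}, and what is left is bookkeeping. The two points deserving a careful sentence are (i) checking that the two occurrences of $\operatorname{Spr}$ in the diagram are used in the same direction as in \Cref{thm_KL}, so that ``$\operatorname{Spr}_{G^\vee} j_{W_P}^W E_P$'' really names the orbit sitting at the top-right corner reached along the bottom-then-right path, and (ii) extracting from Yun's formulation the precise statement that $\operatorname{RT_{min,G^\vee}}$ is a \emph{left} inverse of $\operatorname{KL}_{G^\vee}$; even if his ``section'' only yields $\operatorname{KL}_{G^\vee} \circ \operatorname{RT_{min,G^\vee}} = \operatorname{id}$, the same deduction goes through once one additionally invokes the injectivity of $\operatorname{KL}_{G^\vee}$ as above. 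It is also worth noting in the write-up that the special hypothesis on $O_P$ is used twice, once to make \Cref{thm_KL} applicable and once to invoke \cite[Theorem 1.5]{LUSZTIG20093418}, which is why the top-left corner is $\underline{\mathcal{N}_{G_P}}^{sp}$ rather than all of $\underline{\mathcal{N}_{G_P}}$.
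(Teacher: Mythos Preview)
Your proposal is correct and matches the paper's intent: the paper simply declares that the corollary ``follows immediately from \Cref{thm_KL}'', and your diagram chase---applying \Cref{thm_KL} and then using that $\operatorname{RT_{min,G^\vee}}$ is a section of the injective map $\operatorname{KL}_{G^\vee}$ to deduce $\operatorname{RT_{min,G^\vee}}\circ\operatorname{KL}_{G^\vee}=\operatorname{id}$---is exactly the unpacking of that word ``immediately''. There is nothing to add or correct.
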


The values of the Kazhdan-Lusztig maps $\operatorname{KL}_G$ have been computed by Spaltenstein and Yun in \cite{yun2021minimal,spaltenstein1988polynomials,spaltenstein1990evenorth,spaltenstein1990exceptional}. The parahoric Kazhdan-Lusztig maps $\operatorname{KL}_G^P$ have only been computed for $G$ of classical type by Spaltenstein \cite[Theorem 8.11]{spaltenstein1992order}. To the best of our knowledge, \Cref{thm_KL} gives the first computation of the parahoric Kazhdan-Lusztig maps for exceptional types (restricted to special nilpotent orbits).

\section{Affine Springer fibers}

In this section, we use topological properties of $\mathfrak{c}$ to reduce the proof of \Cref{thm_KL} to studying properties of affine Springer fibers (\Cref{prop:springer_fiber_imply_thm}).

\subsection{Subsets of $L^\heartsuit \mfc$} We recall definitions and results from \cite[Section 5]{yun2021minimal}.

Recall the discriminant function $\Delta\colon L\mfc \to L\mathbb{A}^1$ (see Equation \eqref{eqn:discriminant}). By definition, $L^\heartsuit \mfc$ is the complement of the vanishing set of the discriminant function. Thus, it is an open subscheme of the closed subscheme $L^{++}\mfc \subset L^+\mfc$ of topologically nilpotent elements.

For $n >0$, let $(L^+\mfc)_{\leq n} \subset L^+\mfc$ be the locus such that $\operatorname{val}\Delta(\gamma) \leq n$ for all $\gamma \in (L^+\mfc)_{\leq n}$. Let $(L^\heartsuit \mfc)_{\leq n} = L^\heartsuit \mfc \cap (L^+\mfc)_{\leq n}$. Clearly,
\[
L^\heartsuit \mfc = \bigcup_{n\geq 0} (L^\heartsuit \mfc)_{\leq n}.
\]
and each $(L^\heartsuit \mfc)_{\leq n}$ is fp-open in $L^{++} \mfc$.

We say that $Z \subset L^\heartsuit \mfc$ is fp-closed (resp. fp-open, etc) if $Z \cap (L^\heartsuit \mfc)_{\leq n}$ is fp-closed (resp. fp-open, etc) as a subset of $L^+\mfc$ for any $n\geq 0$. If $Z \subset L^\heartsuit \mfc$ is fp-constructible, we define
\[
\operatorname{codim}_{L^\heartsuit\mfc}(Z) = \min_{n \geq 1} \operatorname{codim}_{(L^\heartsuit\mfc)_{\leq n}}(Z \cap (L^\heartsuit\mfc)_{\leq n}).
\]

The key lemma justifying these definitions is
\begin{lemma}[ {\cite[Lemma 5.2]{yun2021minimal}}]\label{lem:c_constructible}
    Let $n \geq 1$. Suppose that $Z \subset \chi^{-1}(L^+\mfc)_{\leq n} \subset L^+ \mfg$ is fp-constructible. Then so is $\chi(Z) \subset (L^+\mfc)_{\leq n}$.
\end{lemma}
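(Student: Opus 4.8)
The plan is to reduce the statement to a finite-level assertion about the morphism of finite-type schemes induced by $\chi$ on truncated arc spaces, and then invoke Chevalley's theorem that the image of a constructible set under a morphism of finite-type $\CC$-schemes is constructible. The key point that makes this work, and the reason the hypothesis $Z \subset \chi^{-1}(L^+\mfc)_{\leq n}$ is present, is that on the locus where $\operatorname{val}\Delta(\gamma) \leq n$ the map $\chi$ is ``finite-level'' in a precise sense: both the source condition ($Z$ fp-constructible) and the constraint $\operatorname{val}\Delta \leq n$ are pulled back from a fixed truncation $L_N^+\mfg$, and the fiberwise structure of $\chi$ over $(L^+\mfc)_{\leq n}$ only sees finitely many $t$-adic coefficients.

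First I would make the truncation explicit. Since $Z$ is fp-constructible, choose $N$ and $Z_N \subset L_N^+\mfg(\CC)$ constructible with $Z = \pi_N^{-1}(Z_N)$; enlarging $N$ if necessary (using that $(L^+\mfc)_{\leq n}$ and hence $\chi^{-1}(L^+\mfc)_{\leq n}$ is fp-open, cut out at some bounded level $M$, and that $\Delta\colon L^+\mfg \to L^+\AA^1$ is given by a single $\operatorname{Ad}$-invariant polynomial so $\operatorname{val}\Delta(\gamma) \leq n$ only constrains $\gamma$ mod $t^{n+1}$ up to the degree of $\Delta$), I may assume $N \geq $ this bound and that the condition $\gamma \in \chi^{-1}(L^+\mfc)_{\leq n}$ is itself pulled back from $L_N^+\mfg$. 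The Chevalley map $\chi\colon \mfg \to \mfc$ induces, for each pair $N \geq N'$, a morphism of finite-type schemes $\chi_{N,N'}\colon L_N^+\mfg \to L_{N'}^+\mfc$. The content I need is: for a suitable $N'$ depending on $N$ and $n$, the image $\chi(Z)$ equals $\pi_{N'}^{-1}\big(\chi_{N,N'}(Z_N) \cap (L_{N'}^+\mfc)_{\leq n}\big)$, i.e.\ no information beyond level $N'$ is needed to detect membership in $\chi(Z)$. Granting that, Chevalley applied to the morphism $\chi_{N,N'}$ shows $\chi_{N,N'}(Z_N)$ is constructible in $L_{N'}^+\mfc(\CC)$, the discriminant condition $(L_{N'}^+\mfc)_{\leq n}$ is constructible (it is a finite-level locus by the same polynomiality of $\Delta$), and intersecting gives a constructible set whose preimage under $\pi_{N'}$ is $\chi(Z)$; hence $\chi(Z)$ is fp-constructible.

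The heart of the argument — and the step I expect to be the main obstacle — is the claim that on the bounded locus $\{\operatorname{val}\Delta \leq n\}$ the fibers of $\chi$ stabilize at a finite level, so that the set-theoretic image of a level-$N$ constructible set is itself pulled back from some finite level $N'$. The clean way to see this: if $\gamma, \gamma' \in L^+\mfg$ satisfy $\chi(\gamma) = \chi(\gamma')$ and $\operatorname{val}\Delta(\chi(\gamma)) \leq n$, then $\gamma, \gamma'$ are regular (the discriminant is nonzero, and topologically the point is regular semisimple generically / in any case the $\leq n$ bound controls how far from regular semisimple one can be), and over the regular locus $\chi$ is smooth with the regular centralizer group scheme acting simply transitively on fibers; since that group scheme over $(L^+\mfc)_{\leq n}$ is itself of ``bounded type'' (its $t$-adic complexity is governed by $n$ — this is exactly the kind of estimate Yun establishes in \cite[Section 4--5]{yun2021minimal}), two elements of $L^+\mfg$ with the same image agree modulo $t^{N'}$ for $N'$ depending only on $N$ and $n$. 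I would prove this by reducing to the regular semisimple locus via a stratification by $\operatorname{val}\Delta$, bounding on each stratum the ``depth'' to which one must specify $\gamma$ to pin down its $\chi$-fiber, using that $\Delta(\gamma) \leq n$ forces the relevant centralizer and the Kostant section comparison to live at bounded $t$-adic order. Alternatively — and this is likely the cleanest route given the paper cites this as \cite[Lemma 5.2]{yun2021minimal} — I would simply quote the structural results of \cite[Section 4]{yun2021minimal} on $(L^\heartsuit\mfc)_{[w]}$ and the map $\chi$ over bounded-discriminant loci, which already package precisely this finiteness, and assemble them with Chevalley's theorem as above.
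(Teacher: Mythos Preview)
The paper does not prove this lemma; it is stated with a direct citation to \cite[Lemma 5.2]{yun2021minimal} and no argument is supplied. Your closing suggestion—to quote Yun's structural results and assemble them with Chevalley's theorem—is therefore exactly what the paper does, by citation.

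Your outline (truncate, apply Chevalley to the finite-type morphism $\chi_{N'}\colon L_{N'}^+\mfg \to L_{N'}^+\mfc$, then verify $\chi(Z)=\pi_{N'}^{-1}\bigl(\chi_{N'}(Z_{N'})\bigr)$) has the right shape, and you correctly isolate the nontrivial step as the equality of $\chi(Z)$ with a finite-level pullback. However, your description of that step is misstated. You phrase it as a fiberwise injectivity claim—``two elements of $L^+\mfg$ with the same image agree modulo $t^{N'}$''—which is false: the fibers of $\chi$ over $(L^+\mfc)_{\leq n}$ are infinite-dimensional (they contain entire $L^+G$-orbits), so elements with the same $\chi$-image need not agree to any fixed $t$-adic order. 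What is actually required is a \emph{lifting} (surjectivity) statement: given $c\in (L^+\mfc)_{\leq n}$ and $\gamma_{N'}\in Z_{N'}$ with $\chi_{N'}(\gamma_{N'})=\pi_{N'}(c)$, one must produce $\gamma\in L^+\mfg$ lifting $\gamma_{N'}$ with $\chi(\gamma)=c$ exactly. This is a Hensel-type problem for the non-smooth map $\chi$, and the bound $\operatorname{val}\Delta\leq n$ enters by controlling how far the arc can stray from the regular locus $\mfg^{\mathrm{reg}}$ (where $\chi$ is smooth), so that the lifting succeeds once $N'$ is large relative to $n$. Your appeal to the bounded complexity of the regular centralizer over $(L^+\mfc)_{\leq n}$ is the right ingredient, but it feeds into this lifting rather than into any injectivity on fibers.
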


Let $O\subset \mfg$ be a nilpotent orbit. Yun shows in \cite[Lemma 5.3]{yun2021minimal} that $(L^\heartsuit \mfc)_{\overline{O}}$ is closed in $L^\heartsuit \mfc$. By a very slight modification of the proof, we have:

\begin{lemma}\label{lem:c_o_closed}
    Let $P \subset LG(\CC)$ be a parahoric subgroup, and $O_P \subset \mfg_P$ be a nilpotent orbit of $G_P$. Then $(L^\heartsuit \mfc)_{\overline{O_P}}^P$ is fp-closed in $L^\heartsuit \mfc$.
\end{lemma}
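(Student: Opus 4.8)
The plan is to mimic the proof of \cite[Lemma 5.3]{yun2021minimal}, replacing the role played there by $\overline{O} + L^+\mfg$ (for a nilpotent orbit $O\subset\mfg$) with $\overline{O_P} + \operatorname{Lie}P^+$ here. The reference lemma says that if $O\subset\mfg$ is nilpotent then $(L^\heartsuit\mfc)_{\overline O}=\chi\!\left((\overline O + L^+\mfg)\cap L^\heartsuit\mfg\right)$ is closed in $L^\heartsuit\mfc$; the modification needed is purely bookkeeping about where the nilpotent piece and the ``higher order'' piece live. First I would fix a parahoric $P\subset LG(\CC)$, let $x\in\mathbb X_*(T)\otimes\RR$ be the corresponding point in the compact spherical alcove, and recall from the proof of \Cref{lem:parahoric_nilpotent} that after $P$-conjugation any element of $\overline{O_P}+P^+$ has the form (nilpotent element of $\mfn_P$) $+$ (element of $\operatorname{Lie}P^+$), and that $\operatorname{Lie}P^+\subset \operatorname{Lie}I^+\subset t^0 L^+\mfg$, with the positive-depth shifts governed by $\alpha(x)$. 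The point is that $\overline{O_P}+\operatorname{Lie}P^+$ is a \emph{closed} $P$-invariant subvariety of $L^+\mfg$ contained in $t^{-1}L^+\mfg$ (indeed in $L^+\mfg$ itself, since $\mfg_P$ uses only roots with $\alpha(x)\in\{0,\pm1\}$ and we restrict to the appropriate $t$-powers) and is visibly fp-closed: it is cut out by the pullback of the Zariski-closed condition $\overline{O_P}\subset\mfg_P$ together with the linear conditions defining $\operatorname{Lie}P^+$, all of which only involve finitely many jets (depth $\le 2$, exactly as in the proof of \Cref{lem:KL_splitting_type}).

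Next, for each $n\geq 1$ I would intersect with $\chi^{-1}((L^+\mfc)_{\le n})$. On the preimage of $(L^+\mfc)_{\le n}$, the set $Z_n \coloneqq \left((\overline{O_P}+\operatorname{Lie}P^+)\cap L^\heartsuit\mfg\right)\cap\chi^{-1}((L^+\mfc)_{\le n})$ is fp-constructible in $L^+\mfg$: it is the intersection of the fp-closed set $\overline{O_P}+\operatorname{Lie}P^+$ with the fp-open set $L^\heartsuit\mfg = \{\Delta\neq 0\}$ and the fp-locally-closed bounded-discriminant locus. Then \Cref{lem:c_constructible} applies and shows $\chi(Z_n)\subset (L^+\mfc)_{\le n}$ is fp-constructible. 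So the remaining content is to upgrade ``constructible'' to ``closed'' inside $(L^\heartsuit\mfc)_{\le n}$; this is exactly the step where the properness-type input of Yun's proof of \cite[Lemma 5.3]{yun2021minimal} enters. I would reproduce that argument verbatim: the map $\chi$ restricted to the relevant bounded-jet locus is induced by a projective (or at least proper) morphism of finite-type $\CC$-schemes after a suitable truncation — concretely, one uses that $\overline{O_P}+\operatorname{Lie}P^+$, being $P$-invariant, decomposes so that the fibers of $\chi$ over $(L^\heartsuit\mfc)_{\le n}$ meeting it are proper (the affine Springer fiber is of finite type over the bounded-discriminant locus, cf.\ Yun's analysis), hence $\chi$ of an fp-closed subset is fp-closed. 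Concluding, $(L^\heartsuit\mfc)^P_{\overline{O_P}}\cap(L^\heartsuit\mfc)_{\le n} = \chi(Z_n)\cap L^\heartsuit\mfc$ is fp-closed in $(L^\heartsuit\mfc)_{\le n}$ for every $n$, which is the definition of being fp-closed in $L^\heartsuit\mfc$.

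The step I expect to be the main obstacle is precisely this last one: the passage from ``image is constructible'' to ``image is closed''. In Yun's setting this rests on a properness statement (the relevant truncated affine Springer fiber, or the truncated adjoint quotient map restricted to the topologically nilpotent bounded-discriminant locus, is proper) together with $P$-equivariance forcing the closure of $\chi(Z_n)$ to stay inside $\chi$ of the $P$-saturation, which is already $Z_n$. I would need to check that replacing the group $L^+G$ acting on $\overline O + L^+\mfg$ by the parahoric $P$ acting on $\overline{O_P}+\operatorname{Lie}P^+$ does not break this: the key facts are that $P/P^+ = G_P$ is reductive, $\overline{O_P}$ is $G_P$-stable, and $P^+$ acts on the quotient side trivially, so the same fiberwise-properness argument goes through. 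I do not anticipate genuine new difficulties here — this is why the statement is phrased as ``a very slight modification of the proof'' — but verifying the equidimensionality/properness bookkeeping for the parahoric $\mfn_P$ in place of the Borel $\mfn$ (using \cite[Section 10.11]{jantzen2004nilpotent} as in \Cref{lem:KL_splitting_type}) is the part that requires care rather than mere transcription.
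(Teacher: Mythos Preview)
Your outline --- first establish constructibility via \Cref{lem:c_constructible}, then upgrade to closedness --- matches the paper. But your proposed mechanism for the upgrade does not work as stated: the map $\chi$ from (a truncation of) $L^+\mfg$ to (a truncation of) $L^+\mfc$ is \emph{not} proper, even on the bounded-discriminant locus; its fibers are positive-dimensional affine varieties (adjoint orbits together with infinite-dimensional unipotent tails). So ``$\chi$ of an fp-closed set is fp-closed'' is simply unavailable, and no amount of $P$-equivariance or equidimensionality bookkeeping for $\mathfrak n_P$ rescues it.

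The paper's argument routes the properness through the partial affine flag variety $\Fl^P$, not through $\chi$. Concretely: lift the constructible $Y$ and its closure $\overline Y$ to finite-type varieties $V,\overline V\subset L^\heartsuit\mfg$ via the zero section and then the Kostant section, and note that $\gamma\in(L^\heartsuit\mfc)^P_{\overline{O_P}}$ if and only if $\operatorname{ev}_{P,\gamma}^{-1}(\overline{O_P})\subset\Fl^P_\gamma$ is nonempty. Form the closed incidence variety
\[
X^{\overline{O_P}}_{\overline V}\;=\;\bigl\{(gP,\gamma)\in\Fl^P\times\overline V:\operatorname{ev}_{P,\gamma}(gP)\in\overline{O_P}\bigr\}.
\]
The projection $\Fl^P\times\overline V\to\overline V$ is ind-proper, so the image of $X^{\overline{O_P}}_{\overline V}$ is a countable union of closed subsets; it already contains the dense constructible $V$, hence (working over the uncountable field $\CC$) equals all of $\overline V$. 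This shows $\pi_N^{-1}(\overline Y)\subset(L^\heartsuit\mfc)^P_{\overline{O_P}}$, which is the missing inclusion. Your parenthetical remark about affine Springer fibers is pointing at the right object, but the argument is an ind-properness plus countable-union step on $\Fl^P$, not a properness statement for $\chi$.
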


For clarity, we reproduce the proof with relevant modifications here.

\begin{proof}
    By \Cref{lem:c_constructible}, $(L^\heartsuit \mfc)_{\overline{O_P}}^P$ is constructible. Hence there exists $N,n \in \mathbb{N}$, $Y \subset (L^\heartsuit\mfc)_{\leq n}$ such that $(L^\heartsuit \mfc)_{\overline{O_P}}^P = \pi_N^{-1}(Y)$.

    Let $V$, $\overline{V}$ be lifts of $Y$, $\overline{Y}$ respectively to $L^\heartsuit \mfg$, by first the zero section to $(L^\heartsuit \mfc)_{\leq n}$ and then the Kostant section. It suffices to prove: any $v \in \overline{V}$ is $LG$-conjugate to a point in $\overline{O_P} + P^+$. For this would show that $\pi_N^{-1}(\overline{Y}) \subset (L^\heartsuit \mfc)_{\overline{O_P}}^P$.
    
    Let $\Fl^P$, $\Fl^P_\gamma$ denote the partial affine flag variety of type $P$, and the corresponding affine Springer fiber respectively. We have the evaluation morphism $\operatorname{ev}_{P\gamma} \colon \Fl_\gamma^P \to [\mfg_P / G_P]$. For more details, see \Cref{sect:affine_springer_intro}.
    
    Define
    \[
    X_{\overline{V}}^{\overline{O_P}} = \left \{ \Fl_\gamma^P \times \gamma \bigm | \gamma \in \overline{V} \text{, } \operatorname{ev}_{P,\gamma}^{-1} (\overline{O_P}/G) \neq \varnothing\right\} \subset \Fl^P \times \overline{V}.
    \]
    The projection $p \colon \Fl^P \times \overline{V} \to \overline{V}$ is ind-proper. Since $X_{\overline{V}}^{\overline{O_P}}$ is closed, its image under $p$ is a countable union of closed subsets. Moreover, this image contains $V$, hence equals $\overline{V}$. This was what we wanted.
\end{proof}

Let $d_{O_P}$ be the dimension of the Springer fiber (of $G_P$) at any element in $O_P$.

The following theorem is a straightforward generalization of \cite[Theorem 6.1]{yun2021minimal}.
\begin{proposition}\label{prop:closure}
    Let $P \subset LG(\CC)$ be a parahoric subgroup. Let $O \subset \mfg$ be a nilpotent orbit and $[w] = \operatorname{KL}_G^P(O)$. Then
    \[
    (L^\heartsuit\mfc)_{\overline{O_P}}^P = \overline{(L^\heartsuit\mfc)^{sh}_{[w]}} \cap L^\heartsuit \mfc
    \]
    and
    \[
    \operatorname{codim}_{L^\heartsuit \mfc} (L^\heartsuit\mfc)_{\overline{O_P}}^P = d_{O_P}.
    \]
\end{proposition}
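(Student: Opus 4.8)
The plan is to mimic the structure of Yun's proof of \cite[Theorem 6.1]{yun2021minimal}, which treats the case $P = I$ (equivalently, $O_P$ ranging over nilpotent orbits of the original $G$ via the Kostant section), and to check that the parahoric generalization goes through using the tools already assembled in this section. The two displayed assertions are really one: if we know the set-theoretic equality $(L^\heartsuit\mfc)_{\overline{O_P}}^P = \overline{(L^\heartsuit\mfc)^{sh}_{[w]}} \cap L^\heartsuit\mfc$, then the codimension statement follows by identifying the codimension of the right-hand side with $\delta_{[w]}$ (via Yun's computation that shallow loci have codimension equal to the generic $\delta$-invariant of type $[w]$) and then matching $\delta_{[w]}$ with $d_{O_P}$ through the definition of $\operatorname{KL}_G^P$ and the formula \eqref{eqn:springer_dimension} relating $\delta$ to $\operatorname{val}\Delta$ and $\dim\mathfrak{t}-\dim\mathfrak{t}^w$; the point is that a generic element of $O_P + P^+$ has $\delta$-invariant exactly $d_{O_P}$ because the affine Springer fiber dimension specializes to the finite Springer fiber dimension $d_{O_P}$ in the parahoric setting.

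First I would establish the inclusion $(L^\heartsuit\mfc)_{\overline{O_P}}^P \subseteq \overline{(L^\heartsuit\mfc)^{sh}_{[w]}}\cap L^\heartsuit\mfc$. By \Cref{lem:KL_splitting_type}, a generic element of $(O_P\cap\mathfrak{n}_P) + \operatorname{Lie}P^+$ is regular semisimple topologically nilpotent of type $[w]$, and one computes (using the evaluation morphism $\operatorname{ev}_{P,\gamma}$ and the fact that every irreducible component of $O_P\cap\mathfrak{n}_P$ has dimension $\dim\mathfrak{n}_P - d_{O_P}$, cf.\ \cite[Section 10.11]{jantzen2004nilpotent}) that such a generic element attains $\delta(\gamma) = d_{O_P} = \delta_{[w]}$, hence is shallow. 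Since $(L^\heartsuit\mfc)_{\overline{O_P}}^P$ is irreducible-up-to-finitely-many-components and each component is the closure of the image of a single $(E + \operatorname{Lie}P^+)\cap L^\heartsuit\mfg$, and these images all have generic shallow type $[w]$, the whole set lands in the closure of the shallow locus. Here I would lean on \Cref{lem:c_o_closed}, which already tells us $(L^\heartsuit\mfc)_{\overline{O_P}}^P$ is fp-closed, so it equals its own closure intersected with $L^\heartsuit\mfc$; the content is the containment in the shallow closure, which reduces to the $\delta$-invariant computation above.

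For the reverse inclusion, I would argue by dimension count: $\overline{(L^\heartsuit\mfc)^{sh}_{[w]}}\cap L^\heartsuit\mfc$ is irreducible (Yun shows the shallow locus of a fixed type is, or at least equidimensional) of codimension $\delta_{[w]} = d_{O_P}$, and it contains $(L^\heartsuit\mfc)_{\overline{O_P}}^P$ as an fp-closed subset which — by the component-dimension statement for $O_P\cap\mathfrak{n}_P$ together with \Cref{lem:c_constructible} applied to the fibers of $\chi$ — also has codimension exactly $d_{O_P}$. An fp-closed subset of an fp-irreducible set of the same codimension must be the whole thing, modulo the usual care with the truncation level $n$; this is where \Cref{lem:c_constructible} and the definition of $\operatorname{codim}_{L^\heartsuit\mfc}$ in terms of the loci $(L^\heartsuit\mfc)_{\leq n}$ do the bookkeeping. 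Once equality holds, the codimension formula is immediate.

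The main obstacle I anticipate is \emph{not} any single deep input but rather verifying that the parahoric replacement $O_P + P^+$ (with $P$ an arbitrary parahoric, $G_P$ its Levi quotient) behaves exactly like the Iwahori case $O + I^+$ in all of Yun's dimension estimates — in particular that the affine Springer fiber $\Fl^P_\gamma$ for generic $\gamma$ in this locus has the expected dimension $d_{O_P} + (\text{the type-}[w]\text{ contribution})$, matching \eqref{eqn:springer_dimension}. This requires knowing that the evaluation map $\operatorname{ev}_{P,\gamma}\colon \Fl^P_\gamma \to [\mfg_P/G_P]$ is well-behaved and that the fiber over $O_P/G_P$ has the dimension predicted by the finite Springer theory of $G_P$; Yun's global Springer machinery, cited in \Cref{sect:affine_springer_intro}, should supply exactly this, but pinning down that the codimension of $O_P\cap\mathfrak{n}_P$ in $\mathfrak{n}_P$ equals $d_{O_P}$ (rather than the Springer fiber dimension of $G$) and propagates correctly through $\chi$ is the step I would write most carefully. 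The rest is a faithful transcription of \cite[Theorem 6.1]{yun2021minimal}.
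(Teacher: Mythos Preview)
Your proposal is correct and follows essentially the same approach as the paper: invoke \cite[Proposition 8.2]{kazhdan1988fixed} for $\delta_{[w]} = d_{O_P}$, establish the codimension upper bound $\operatorname{codim}_{L^\heartsuit\mfc}(L^\heartsuit\mfc)_{\overline{O_P}}^P \leq d_{O_P}$ via the fact that $O_P\cap\mathfrak{n}_P$ has codimension $d_{O_P}$ in $\mathfrak{n}_P$, and then defer to Yun's \cite[Theorem 6.1]{yun2021minimal} verbatim for the remainder. The paper makes explicit precisely the step you flagged as requiring care, namely the identity $L^\heartsuit\mathfrak{I} = (\mathfrak{n}_P + P^+)\cap L^\heartsuit\mfg$ (where $\mathfrak{I} = \operatorname{Lie} I$), which is what allows the codimension of $O_P\cap\mathfrak{n}_P$ in $\mathfrak{n}_P$ to be transported to the codimension of the preimage in $L^\heartsuit\mathfrak{I}$ and thence to $L^\heartsuit\mfc$.
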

\begin{proof}
    By \cite[Proposition 8.2]{kazhdan1988fixed}, $\delta_{[w]} = d_{O_P}$. We claim that
    \[
    \operatorname{codim}_{L^\heartsuit \mfc}(L^\heartsuit \mfc)_{\overline{O_P}}^P \leq d_{O_P}.
    \]
    Let $\mathfrak{I}$ be the lie algebra of $I$, and $(L^\heartsuit\mathfrak{I} )_{\overline{O_P}}^P = \chi^{-1} (L^\heartsuit \mfc)_{\overline{O_P}}^P$. In loc. cit, it is shown that
    \[
    \operatorname{codim}_{L^\heartsuit \mfc}(L^\heartsuit \mfc)_{\overline{O_P}}^P = \operatorname{codim}_{L^\heartsuit \mathfrak{I}}(L^\heartsuit\mathfrak{I} )_{\overline{O_P}}^P.
    \]
    Let $\mathfrak{n}_P$ be the nilpotent subalgebra of $\mfg_P$ as in \Cref{lem:parahoric_nilpotent}. To bound the right hand side, note that
    \[
    L^\heartsuit \mathfrak{I} = (\mathfrak{n} + tL^+\mfg) \cap L^\heartsuit \mfg =(\mathfrak{n}_P + (\mathfrak{n} \setminus \mathfrak{n}_P) + tL^+ \mfg) \cap L^\heartsuit \mfg = (\mathfrak{n}_P + P^+) \cap L^\heartsuit\mfg.
    \]
    Since $(L^\heartsuit\mathfrak{I} )_{\overline{O_P}}^P$ contains $(O_P \cap \mathfrak{n}_P) + P^+$, and $O_P \cap \mathfrak{n}_P$ has codimension $d_{O_P}$ in $\mathfrak{n}_P$, the inequality follows.
    
    The rest of the proof is identical to the one in \cite[Theorem 6.1]{yun2021minimal}.
\end{proof}

\subsection{Affine Springer fibers}\label{sect:affine_springer_intro}

Let $\gamma \in L\mfg$. For any parahoric subgroup $P \subset LG(\CC)$, consider the subfunctor $Y \subset \Fl^P$ given by
\[
Y(R) = \{gP \in \Fl_P(R) \mid \operatorname{Ad}(g^{-1})\gamma \in R\otimes \operatorname{Lie} P \}.
\]
Recall that the affine Springer fiber of type $P$ is defined to be the reduced ind-subscheme of $Y$. We denote it by $\Fl^P_\gamma$.

It is know that $\Fl_\gamma^P$ is finite dimensional if and only if $\gamma$ is regular semisimple. In this case, it is known that
\[
\dim \Fl_\gamma = \delta(\gamma)
\]
(see \eqref{eqn:springer_dimension}). This is the dimension formula of Bezrukavnikov \cite{bezrukavnikov1996dimension}.

Let $\gamma \in L^\heartsuit\mfc$. Since the global dimension of $\CC((t))$ is at most 1, the fiber $\chi^{-1}(\gamma) \subset L\mfg$ is a single $LG$-conjugacy class. For $\gamma_1$, $\gamma_2 \in \chi^{-1}(\gamma)$ with $\gamma_1 = \operatorname{Ad}(g) \gamma_2$, left multiplication by $g$ gives an isomorphism $\Fl_{\gamma_1}^P \to \Fl_{\gamma_2}^P$.

Let $\kappa\colon \mfc \to \mfg$ be the Kostant section. This similarly extends to $\kappa \colon L^\heartsuit \mfc \to L^\heartsuit \mfg$. Let $\gamma' \in L\mfg$, and set $\gamma = \chi(\gamma')$. By the preceding paragraph, the isomorphism class of $\Fl_{\gamma'}^P$ depends only on $\gamma$. In particular, it is isomorphic to $\Fl_{\kappa(\gamma)}^P$. We sometimes abuse notation and write $\Fl_{\gamma}^P$ when no confusion will occur.

\subsubsection{Reduction type} For any $P$, $\gamma$ as above, we have the evaluation morphism
\[
\operatorname{ev}_{P\gamma} \colon \Fl_\gamma^P \to [\mfg_P / G_P]
\]
sending
\[
gP \mapsto \operatorname{Ad}(g^{-1})\gamma  \pmod{P^+} \in \mfg_P / G_P.
\]
Let $O_P$ be a nilpotent orbit in $\mfg_P$. We say that $\Fl_\gamma^P$ has \textit{reduction type} $O_P$ if $\operatorname{ev}_P ^{-1} (O_P) \neq \varnothing$. When $P=I$, we abbreviate notation by dropping the superscript $P$ if there is no ambiguity.

\subsection{The dual torus} Fix a $W$-equivariant isomorphism $\mathfrak{t} \cong \mathfrak{t^*}$. This allows us to identify the Chevalley base of $G$ and $G^\vee$:
\[
\mfg // G  \cong \mathfrak{t}//W \cong \mathfrak{t^*}//W \cong \mfg^\vee//G^\vee = \mfc.
\]
\begin{lemma}\label{lem:root_val}
    Let $\gamma \in L\mathfrak{t}$ correspond to $\gamma^\vee \in L\mathfrak{t}^\vee = L\mathfrak{t}^*$. Let $\alpha$ be a root of $G$, and $\alpha^\vee$ be its coroot, which we view as a root of $G^\vee$. Then the root valuations $\operatorname{val}_t\alpha(\gamma)$ and $\operatorname{val}_t \alpha^\vee(\gamma^\vee)$ are equal.
\end{lemma}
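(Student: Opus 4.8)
The plan is to exploit the elementary fact that a $W$-equivariant isomorphism $\phi\colon\mathfrak{t}\xrightarrow{\sim}\mathfrak{t}^*$ carries the coroot line $\CC\alpha^\vee\subset\mathfrak{t}$ onto the root line $\CC\alpha\subset\mathfrak{t}^*$, up to a nonzero scalar, and that nonzero scalars do not change $t$-adic valuations; so the two valuations in the statement will differ only by the valuation of such a scalar, which is $0$.

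First I would reduce to a convenient choice of $\phi$. Since $G$ is almost simple the reflection representation $\mathfrak{t}$ is absolutely irreducible and self-dual, so $\operatorname{Hom}_W(\mathfrak{t},\mathfrak{t}^*)$ is one-dimensional and any two $W$-equivariant isomorphisms differ by a scalar $c\in\CC^\times$. Replacing $\phi$ by $c\phi$ replaces $\gamma^\vee$ by $c\gamma^\vee$, hence $\alpha^\vee(\gamma^\vee)$ by $c\,\alpha^\vee(\gamma^\vee)$, which does not affect $\operatorname{val}_t$; so I may assume $\phi(v)=B(v,-)$ for a fixed nondegenerate $W$-invariant symmetric bilinear form $B$ on $\mathfrak{t}$ (for instance the restriction of the Killing form).

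Next I would pin down $\phi^{-1}(\alpha)$. Writing $t_\alpha:=\phi^{-1}(\alpha)$, i.e. $B(t_\alpha,-)=\alpha$, the $W$-equivariance applied to $s_\alpha$ (which acts by $-1$ on $\CC\alpha\subset\mathfrak{t}^*$) gives $s_\alpha(t_\alpha)=-t_\alpha$; since $s_\alpha(v)=v-\alpha(v)\alpha^\vee$ on $\mathfrak{t}$, its $(-1)$-eigenspace is exactly $\CC\alpha^\vee$, so $t_\alpha=c_\alpha\alpha^\vee$ with $c_\alpha\in\CC^\times$. Then, extending $B$ and $\phi$ to $L\mathfrak{t}$ by $\CC((t))$-linearity, identifying $\mathfrak{t}^*$ with $\operatorname{Lie}T^\vee$ and reading $\alpha^\vee$ as a root of $G^\vee$ (so that $\alpha^\vee(\xi)=\langle\alpha^\vee,\xi\rangle$ for $\xi\in\mathfrak{t}^*$, the canonical pairing), I would compute
\[
\alpha(\gamma)=B(t_\alpha,\gamma)=c_\alpha\,B(\alpha^\vee,\gamma)=c_\alpha\,\langle\alpha^\vee,\phi(\gamma)\rangle=c_\alpha\,\alpha^\vee(\gamma^\vee),
\]
where the last step uses $\gamma^\vee=\phi(\gamma)$. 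Taking $\operatorname{val}_t$ and using $\operatorname{val}_t(c_\alpha)=0$ finishes the proof; the degenerate case $\alpha(\gamma)=0$ is handled by the same identity, both sides then being $+\infty$.

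Honestly there is no substantial obstacle here: the only thing requiring care is the bookkeeping among $\mathfrak{t}$, $\mathfrak{t}^*=\operatorname{Lie}T^\vee$ and their duals, so that ``$\alpha^\vee$ viewed as a root of $G^\vee$'' is the correct functional, together with the elementary but crucial observation that $W$-equivariance forces $\phi$ to match roots with coroots up to scalars. Note also that everything lives over $\CC((t))$ — no passage to $\overline{\CC((t))}$ is needed — so $\operatorname{val}_t$ is the ordinary $t$-adic valuation throughout.
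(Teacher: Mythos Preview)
Your proof is correct and follows essentially the same approach as the paper: both arguments observe that under the $W$-equivariant identification $\mathfrak{t}\cong\mathfrak{t}^*$, the functionals $\alpha(\gamma)$ and $\alpha^\vee(\gamma^\vee)$ differ by a nonzero complex scalar, hence have equal $t$-adic valuation. The paper is marginally more direct---it simply quotes the standard formula $\alpha^\vee=\tfrac{2\alpha}{(\alpha|\alpha)}$ under the bilinear form, obtaining the explicit scalar $\tfrac{2}{(\alpha|\alpha)}$---whereas you deduce the existence of the scalar $c_\alpha$ from the $(-1)$-eigenspace of $s_\alpha$ and add a (harmless but unnecessary) Schur-lemma reduction on the choice of $\phi$.
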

\begin{proof}
    Let $( - | - )$ be the positive definite symmetric bilinear form induced by the identification $\mathfrak{t} \cong \mathfrak{t}^*$. Then $\alpha^\vee \in \mathfrak{t}^*$ is identified with $\frac{2a}{(a|a)} \in \mathfrak{t}$. It follows that
    \[
    \alpha^\vee (\gamma^\vee) = \frac{2}{(a|a)} \alpha(\gamma),
    \]
    i.e. they differ by a nonzero complex scalar. Therefore their valuations are equal.
\end{proof}
Denote the affine Springer fiber (with respect to the dual Iwahori) with respect to $G^\vee$ (with the dual pinning induced from $G$) by $\Fl_\gamma^\vee$.
\begin{lemma}\label{lem:fl_dual_same_dim}
    We have $\dim \Fl_\gamma^\vee = \dim \Fl_\gamma$.
\end{lemma}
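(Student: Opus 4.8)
The plan is to use the Bezrukavnikov dimension formula $\dim \Fl_\gamma = \delta(\gamma)$ (valid since $\gamma \in L^\heartsuit\mfc$ is regular semisimple), and to check that the quantity $\delta(\gamma)$ computed for $G$ equals the quantity $\delta(\gamma)$ computed for $G^\vee$. Recall from \eqref{eqn:springer_dimension} that
\[
\delta(\gamma) = \frac{\operatorname{val}(\Delta(\gamma)) - (\dim \mathfrak{t} - \dim \mathfrak{t}^w)}{2},
\]
so it suffices to verify that the two ingredients — the valuation of the discriminant, and the Cartan type $[w]$ (which determines $\dim\mathfrak{t}^w$) — agree on the $G$ side and the $G^\vee$ side after the identification $\mfc \cong \mfg//G \cong \mfg^\vee//G^\vee$ of \Cref{lem:root_val}.

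First I would treat the discriminant term. After passing to $\overline{\CC((t))}$, write $\gamma$ in the form $\mathfrak{t}(\CC((t^{1/m})))$ with image $\gamma^\vee$ correspondingly in $\mathfrak{t}^*(\CC((t^{1/m})))$. By definition $\Delta(\gamma) = \prod_{\alpha\in\Phi}\alpha(\gamma)$ and $\Delta(\gamma^\vee) = \prod_{\alpha^\vee\in\Phi^\vee}\alpha^\vee(\gamma^\vee)$. Since $\alpha\mapsto\alpha^\vee$ is a bijection $\Phi\to\Phi^\vee$, and by \Cref{lem:root_val} each factor satisfies $\operatorname{val}_t\alpha^\vee(\gamma^\vee) = \operatorname{val}_t\alpha(\gamma)$, summing over all roots gives $\operatorname{val}(\Delta(\gamma^\vee)) = \operatorname{val}(\Delta(\gamma))$.

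Next I would handle the Cartan type. The $LG$-conjugacy class of the Cartan subalgebra containing $\gamma$ is detected by the Galois action of $\operatorname{Gal}(\overline{\CC((t))}/\CC((t)))$ on the set of roots (equivalently, on $\mathfrak{t}$), which factors through a finite cyclic quotient whose generator acts as an element of $W$ well-defined up to conjugacy, namely $[w]$; this is exactly the parametrization of \cite[Lemma 1.2]{kazhdan1988fixed}. Under the fixed $W$-equivariant identification $\mathfrak{t}\cong\mathfrak{t}^*$, the Galois action on $\mathfrak{t}$ determining the splitting field of $\gamma$ is carried to the Galois action on $\mathfrak{t}^*$ determining that of $\gamma^\vee$ (both are read off from the coordinates of $\gamma$, resp. $\gamma^\vee$, in $\CC((t^{1/m}))$, and these coordinates are related by the fixed linear isomorphism). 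Hence $\gamma^\vee$ has the same Cartan type $[w]\in\underline W$, and in particular $\dim\mathfrak{t}^w = \dim(\mathfrak{t}^*)^w$. (The minor subtlety that $W\cong W^\vee$ canonically for Langlands dual groups, and that this isomorphism is compatible with the chosen $\mathfrak t\cong\mathfrak t^*$, should be noted but is routine.)

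Combining, $\delta(\gamma)$ is the same number whether computed inside $G$ or $G^\vee$, so by Bezrukavnikov's formula applied on both sides, $\dim\Fl_\gamma^\vee = \delta(\gamma) = \dim\Fl_\gamma$. The main point requiring care — though it is more bookkeeping than genuine obstacle — is making sure the \emph{same} element $\gamma\in L^\heartsuit\mfc$ really does produce regular semisimple elements on both sides (so that both affine Springer fibers are finite-dimensional and the dimension formula applies); this is immediate since $L^\heartsuit\mfc$ is defined by nonvanishing of the discriminant, which is the same function under the identification, and the Kostant section on either side lands in the regular semisimple topologically nilpotent locus.
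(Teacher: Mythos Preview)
Your proposal is correct and follows essentially the same approach as the paper: apply Bezrukavnikov's dimension formula on both sides and use \Cref{lem:root_val} to match the discriminant valuations. The paper's proof is a one-liner citing exactly these two ingredients; your write-up is a careful unpacking of it, and your explicit treatment of the Cartan type (showing $[w]$ is unchanged under $\mathfrak t\cong\mathfrak t^*$) is handled in the paper as the separate lemma immediately following this one.
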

\begin{proof}
    Follows directly from Bezrukavnikov's dimension formula and \Cref{lem:root_val}.
\end{proof}
Let $(L^\heartsuit\mfc)_{[w]}^\vee$ be the counterpart of $(L^\heartsuit\mfc)_{[w]}$ defined for $G^\vee$.
\begin{lemma}
    We have $(L^\heartsuit\mfc)_{[w]}^\vee = (L^\heartsuit\mfc)_{[w]}$.
\end{lemma}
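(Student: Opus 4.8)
The plan is to reduce everything to the observation that the identification $\mfc = \mfg//G \cong \mathfrak{t}//W \cong \mathfrak{t}^*//W \cong \mfg^\vee//G^\vee$ is compatible with all the structure that enters the definition of the subsets $(L^\heartsuit\mfc)_{[w]}$. First I would recall, following \cite[Lemma 1.2]{kazhdan1988fixed}, how the type of a point $\gamma \in L^\heartsuit\mfc$ is read off. Choosing $m$ sufficiently divisible, one may lift $\gamma$ to $\tilde\gamma \in \mathfrak{t}\big(\CC((t^{1/m}))\big)$ along the finite $W$-cover $\mathfrak{t}\to\mfc=\mathfrak{t}//W$; since $\gamma$ is regular semisimple the $W$-stabilizer of $\tilde\gamma$ is trivial, and since $\gamma$ is defined over $\CC((t))$ the Galois group $\mu_m=\Gal\!\big(\CC((t^{1/m}))/\CC((t))\big)$ satisfies $\sigma\cdot\tilde\gamma=w_\sigma(\tilde\gamma)$ for unique $w_\sigma\in W$. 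The assignment $\sigma\mapsto w_\sigma$ is (up to the immaterial anti-automorphism convention, harmless for cyclic $\mu_m$) a homomorphism well defined up to $W$-conjugacy, and $\gamma$ has type $[w]$ exactly when a topological generator of $\mu_m$ is sent to the class $[w]$. The point of this reformulation is that $(L^\heartsuit\mfc)_{[w]}$ is determined by only two pieces of data: the subset $L^\heartsuit\mfc\subset L\mfc$, and the finite $W$-cover $\mathfrak{t}\to\mfc$.

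Next I would check that both pieces of data are preserved by the identification $\phi\colon\mfc_G\xrightarrow{\sim}\mfc_{G^\vee}$. For the cover: $\phi$ is induced by the fixed $W$-equivariant isomorphism $\psi\colon\mathfrak{t}\xrightarrow{\sim}\mathfrak{t}^*$, which is defined over $\CC$, so it fits into a square commuting with the projections $\mathfrak{t}\to\mfc$ and $\mathfrak{t}^*\to\mfc^\vee$ and intertwining the two $W$-actions. For the subset $L^\heartsuit\mfc = L^{++}\mfc \setminus V(\Delta)$: topological nilpotence of a point of $L\mfc$ depends only on $\mfc$ viewed as an affine space pointed at the origin — lifting by the Kostant section $\kappa$ to $L^+\mfg$ and applying \cite[Lemma 2.1]{kazhdan1988fixed}, one sees $\gamma\in L\mfc$ is topologically nilpotent iff $\gamma\in L^+\mfc$ and $\gamma(0)=0$ — and $\psi$ sends $0$ to $0$; while the discriminants match because, by the computation in the proof of \Cref{lem:root_val}, on $\mathfrak{t}$ one has $\Delta^\vee\big(\psi(\gamma)\big)=\Big(\prod_{\alpha\in\Phi}\tfrac{2}{(\alpha\,|\,\alpha)}\Big)\,\Delta(\gamma)$, a nonzero scalar (and $W$-invariant, so the relation descends to $\mfc$), whence $\Delta$ and $\phi^*\Delta^\vee$ have the same vanishing locus. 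Hence $\phi\big(L^\heartsuit\mfc\big)=(L^\heartsuit\mfc)^\vee$.

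Finally I would combine these. Given $\gamma\in L^\heartsuit\mfc$ with lift $\tilde\gamma$ as above, $\psi(\tilde\gamma)\in\mathfrak{t}^*\big(\CC((t^{1/m}))\big)$ is a lift of $\phi(\gamma)$ along $\mathfrak{t}^*\to\mfc^\vee$; since $\psi$ is defined over $\CC$ it commutes with the Galois action on the coefficient field, and since it is $W$-equivariant, $\sigma\cdot\psi(\tilde\gamma)=\psi(\sigma\cdot\tilde\gamma)=\psi\big(w_\sigma(\tilde\gamma)\big)=w_\sigma\big(\psi(\tilde\gamma)\big)$, so $\phi(\gamma)$ has the same monodromy class, hence the same type $[w]$. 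Therefore $\phi$ carries $(L^\heartsuit\mfc)_{[w]}$ into $(L^\heartsuit\mfc)^\vee_{[w]}$, and running the identical argument with $\psi^{-1}$ in place of $\psi$ gives the opposite inclusion, so the two subsets coincide. The one point that needs care — and which I regard as the only real content of the argument — is getting the ``type = monodromy'' description honestly set up and seeing that it depends only on the cover $\mathfrak{t}\to\mfc$ rather than on $G$ itself; granting that, together with the intrinsic characterization of topological nilpotence, the transfer is essentially formal, with the equality of ramification degrees $m$ on the two sides being automatic from the fact that $\psi$ is an isomorphism and the common Weyl group $W$ (acting on $\mathfrak{t}$ and $\mathfrak{t}^*$) making the bookkeeping of \cite[Lemma 1.2]{kazhdan1988fixed} consistent for $G$ and $G^\vee$.
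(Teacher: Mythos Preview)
Your argument is correct and follows essentially the same approach as the paper: both reduce to the observation that the type $[w]$ of $\gamma\in L^\heartsuit\mfc$ is determined by the Galois monodromy of a lift to $\mathfrak{t}$ along the $W$-cover $\mathfrak{t}\to\mfc$, which is identified with $\mathfrak{t}^*\to\mfc^\vee$ via the fixed $W$-equivariant isomorphism. The paper's proof is a two-line sketch of this idea, whereas you have additionally taken care to verify that the locus $L^\heartsuit\mfc$ itself is preserved under the identification, which is a reasonable point to make explicit.
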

\begin{proof}
    Let $\tau$ be the topological generator of the absolute Galois group of $\CC((t))$ that multiplies $t^{1/m}$ by $\exp(2\pi i/m)$. Let $\gamma \in L\mfc$. Then $\gamma$ has type $[w]$ if $w\tau \gamma = \gamma$, where $\tau$ acts on powers of $t$ and $w$ acts on coefficients of $\gamma$. The lemma is clear from this description.
\end{proof}
We thus suppress the superscript $\vee$ when referring to such subsets of $L^\heartsuit\mfc$. Recall that the topological notions of subsets of $L^\heartsuit\mfc$ we defined using the ind-topology of $\mathfrak{t}$, and the discriminant function. By \Cref{lem:root_val}, these topological notions are the same.

Let $O_P \subset \mfg$ be a special nilpotent orbit of $G_P$, and $E_P$ be the corresponding $W_P$ representation under the Springer correspondence for $G_P$. Let $O \subset \mfg$ be the nilpotent orbit for $G$ corresponding to $j_{W_P}^W E_P$ (notations as in \Cref{sect:reformulation}).
\begin{proposition}\label{prop:springer_fiber_imply_thm}
    Suppose that there exists a dense constructible subset $U \subset (L^\heartsuit\mfc)^P_{O_P} $ such that for any $\gamma \in U$, $\Fl_\gamma^{G^\vee}$ has reduction type $O$. Then \Cref{thm_KL} holds.
\end{proposition}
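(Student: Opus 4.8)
The plan is to express both sides of \Cref{thm_KL} as fp-closed subsets of $L^\heartsuit\mfc$ by means of \Cref{prop:closure}, and then use the affine Springer fiber hypothesis to force these two subsets to coincide. Write $O_P=\operatorname{Spr}_{G_P}E_P$, so the right-hand side of \Cref{thm_KL} is $[w]:=\operatorname{KL}_G^P(O_P)$; and, as in the proof of \Cref{prop:equivalence}, let $O^\vee$ be the nilpotent orbit of $G^\vee$ with $\operatorname{Spr}_{G^\vee}(O^\vee)=j_{W_P}^W E_P$ (the orbit ``$O$'' of the statement, via \cite[6.5, 6.7]{lusztig_cells_1989}), so the left-hand side is $[w^\vee]:=\operatorname{KL}_{G^\vee}(O^\vee)$. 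The goal is then simply $[w]=[w^\vee]$.

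First I would apply \Cref{prop:closure} to $(G,P,O_P)$ to get that $A:=(L^\heartsuit\mfc)^P_{\overline{O_P}}=\overline{(L^\heartsuit\mfc)^{sh}_{[w]}}\cap L^\heartsuit\mfc$ is fp-closed, fp-irreducible (being $\chi$ of $(\overline{O_P}+P^+)\cap L^\heartsuit\mfg$, whose fp-irreducibility is the content of the computation in \Cref{lem:KL_splitting_type}), of codimension $d_{O_P}$ in $L^\heartsuit\mfc$, and contains $(L^\heartsuit\mfc)^P_{O_P}$ as an fp-dense subset. I would then apply \Cref{prop:closure} to $(G^\vee,L^+G^\vee,O^\vee)$ — legitimate because $\mfc$ for $G$ and $G^\vee$ have been identified and, by \Cref{lem:root_val}, the fp-topology, the type-$[w]$ loci and the shallow loci all agree, and because $\operatorname{KL}_{G^\vee}^{L^+G^\vee}$ is just the usual $\operatorname{KL}_{G^\vee}$ — to get that $B:=(L^\heartsuit\mfc)^{L^+G^\vee}_{\overline{O^\vee}}=\overline{(L^\heartsuit\mfc)^{sh}_{[w^\vee]}}\cap L^\heartsuit\mfc$ is fp-closed, fp-irreducible, of codimension $d_{O^\vee}$.

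The numerical heart of the argument is the equality $d_{O_P}=d_{O^\vee}$: the dimension of a Springer fiber is the $b$-invariant of the associated Springer representation, so $d_{O_P}=b_{E_P}$ and $d_{O^\vee}=b_{j_{W_P}^W E_P}$, and $j$-induction preserves the $b$-invariant by Macdonald--Lusztig--Spaltenstein (\cite[Theorem 5.2.6]{geck2000characters}). Next I would feed in the hypothesis: for $\gamma$ in the given fp-dense $U\subseteq (L^\heartsuit\mfc)^P_{O_P}$, the affine Springer fiber $\Fl^{G^\vee}_\gamma$ (the maximal-parahoric one for $G^\vee$, whose evaluation map lands in $[\mfg^\vee/G^\vee]$, so that ``reduction type $O^\vee$'' is meaningful) has a point $gL^+G^\vee$ with $\operatorname{Ad}(g^{-1})\gamma\in L^+\mfg^\vee$ reducing mod $t$ into $O^\vee$; unwinding the definition of $(L^\heartsuit\mfc)^{L^+G^\vee}_{\overline{O^\vee}}$, this says exactly $\gamma\in B$. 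Hence $U\subseteq B$, and since $B$ is fp-closed while $U$ is fp-dense in $A$, we conclude $A\subseteq B$.

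Finally, $A\subseteq B$ with $A,B$ fp-irreducible of equal codimension $d_{O_P}=d_{O^\vee}$ forces $A=B$, i.e. $\overline{(L^\heartsuit\mfc)^{sh}_{[w]}}\cap L^\heartsuit\mfc=\overline{(L^\heartsuit\mfc)^{sh}_{[w^\vee]}}\cap L^\heartsuit\mfc$; as $(L^\heartsuit\mfc)^{sh}_{[w]}$ and $(L^\heartsuit\mfc)^{sh}_{[w^\vee]}$ are each fp-dense and fp-constructible in this common fp-irreducible set they must intersect, and since an element of $L^\heartsuit\mfc$ has a well-defined type this yields $[w]=[w^\vee]$, which is \Cref{thm_KL}. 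I expect the main obstacle to be bookkeeping rather than a new idea: checking that \Cref{prop:closure} transfers verbatim to $G^\vee$ at the maximal parahoric and that the dual-Chevalley-base identification makes the two copies of $L^\heartsuit\mfc$ literally the same with matching fp-topology; pinning down the fp-irreducibility and fp-codimension of $A$ and $B$ in Yun's arc-space topology (and the fp-density of $(L^\heartsuit\mfc)^P_{O_P}$ in $A$); and — the one genuinely load-bearing point — the identity $d_{O}=b_{\operatorname{Spr}(O)}$ together with invariance of $b$ under $j$-induction, which is precisely what makes ``$A\subseteq B$ of equal codimension'' collapse to ``$A=B$''.
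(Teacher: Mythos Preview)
Your proposal is correct and follows essentially the same route as the paper's proof: reduce to comparing the fp-closed sets $\overline{(L^\heartsuit\mfc)^{sh}_{[w]}}$ and $\overline{(L^\heartsuit\mfc)^{sh}_{[w^\vee]}}$ via \Cref{prop:closure}, use the reduction-type hypothesis to obtain one containment, and force equality via the codimension identity $d_{O_P}=b_{E_P}=b_{j_{W_P}^W E_P}=d_{O^\vee}$. The only cosmetic difference is that the paper finishes by a contradiction (if $[w]\neq[w^\vee]$ then $(L^\heartsuit\mfc)_{[w]}$ would sit in a set of strictly larger codimension), whereas you argue directly that two fp-irreducible sets of equal codimension with one contained in the other must coincide; these are equivalent.
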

\begin{proof}

    Let $[w] = \operatorname{KL}_{G^\vee}(O)$ and $[w]_P = \operatorname{KL}_G^P(O_P)$. We wish to show that $[w] = [w]_P$. 
    
    Since $\Fl_\gamma^{G^\vee}$ has reduction type $O$, $\gamma$ is $LG$-conjugate into $O + tL^+G^\vee$. It follows that $\gamma \in (L^\heartsuit \mfc)_{O}^{G^\vee}$. 

    Note that $(L^\heartsuit \mfc)_{O}^{G^\vee}$ is fp-open in $(L^\heartsuit \mfc)_{\overline{O}}^{G^\vee}$. Indeed, it is the complement of finitely many $(L^\heartsuit\mfc)_{O'}^{G^\vee}$, $O' < O$, and each of these subsets is closed by \Cref{lem:c_o_closed}. Moreover, $(L^\heartsuit \mfc)_{O}^{G^\vee}$ is an open subset of $\chi(\overline{O}+tL^+G^\vee)$, hence irreducible (see \Cref{lem:KL_splitting_type}). Similar remarks hold for $(L^\heartsuit\mfc)_{O_P}^P$.

    It follows that
    \[
    \overline{(L^\heartsuit \mfc)_{O}^{G^\vee}} = \overline{(L^\heartsuit\mfc)_{[w]}} \cap L^\heartsuit\mfc \supset \overline{U} = \overline{(L^\heartsuit\mfc)_{O_P}^P} 
 = \overline{(L^\heartsuit\mfc)_{[w]_P}}\cap L^\heartsuit\mfc 
    \]
    where we applied \Cref{prop:closure} twice. By the same proposition,
    \[
    \operatorname{codim}_{(L^\heartsuit \mfc)}(L^\heartsuit \mfc)_{[w]} = \delta_{[w]} = d_O = b_{j_{W_P}^W E_P} = b_{E_P} = d_{O_P} = \operatorname{codim}_{(L^\heartsuit \mfc)}(L^\heartsuit \mfc)_{[w]_P}.
    \]
    If $[w] \neq [w]_P$, then
    \[
    (L^\heartsuit \mfc)_{[w]_P} \subset \overline{(L^\heartsuit \mfc)_{[w]} }  \setminus (L^\heartsuit\mfc)_{[w]}
    \]
    and the latter has codimension strictly greater than $\delta_[w]$, which is a contradiction. 
\end{proof}

\section{Affine Springer representations}

To show that the condition of \Cref{prop:springer_fiber_imply_thm} holds, we make a detailed study of Springer representations appearing in the cohomology of affine Springer fibers.

\subsection{Local Picard}\label{sect:local_picard} The material in this subsection follows \cite[Section 3.3]{ngo2010lemme}. Let $J \to \mfc$ be the regular centralizer group scheme. Let $\gamma \in L^\heartsuit \mfc$. View $\gamma$ as a map $\operatorname{Spec} \mathcal{O} \to \mfc$, and denote $J_\gamma \coloneqq \gamma^* J $. Let $\mathcal{P}_\gamma$ be the affine Grassmanian of $J_\gamma$; in other words, it associates to a $\CC$-scheme $S$ the groupoid of pairs consisting of a torsor over $S[[t]]$ along with a trivialization over $S((t))$. Its $\CC$-points are identified with $J_\gamma(F)/J_\gamma(\mathcal{O})$. 

It is known that $\mathcal{P}_\gamma$ is representable by a group ind-scheme. Let $\Lambda$ be the maximal free quotient of $\pi_0(\mathcal{P}_\gamma)$.

For any parahoric $P$, the centralizer $Z_{LG}(\gamma)$ acts on $\Fl_\gamma^P$ by left multiplication, and its action factors through $\mathcal{P}_\gamma$. Choosing an arbitrary splitting $\Lambda \to \mathcal{P}_\gamma$, we get an action of $\Lambda$ on $\Fl_\gamma^P$. It is known that $\Lambda \setminus \Fl_\gamma^P $ is a projective variety \cite[Proposition 3.4.1]{ngo2010lemme}. 

\subsection{The $\widetilde{W}$ action on cohomology}\label{sect:springer_action} In \cite[Section 5.4]{lusztig1996affine}, Lusztig constructs an action of $W_{\operatorname{aff}}$ on $H^*_c (\Fl_\gamma)$. Yun extends this to an action of $\widetilde{W}$ in \cite[Theorem 2.5]{yun2014spherical}. We briefly recall the construction, following \cite{yun2014spherical}.

We have a Cartesian diagram
\begin{center}
\begin{tikzcd}
    \Fl_\gamma \arrow[r,"\operatorname{ev}"] \arrow[d,"\pi_P^I"]& \left[\widetilde{\mfg_P}/G_P\right]\arrow[d,"\pi_{\mfg_P}"] \\
    \Fl_\gamma^P\arrow[r,"\operatorname{ev}_P"] & \left[\mfg_P/G_P\right]
\end{tikzcd}
\end{center}
where $\widetilde{\mfg_P} \to \mfg_P$ is the Grothendieck simultaneous resolution. Applying proper base change,
\[
\operatorname{ev}_P^*\pi_{\mfg_P,!} \CC  = \pi_{P,!}^I \operatorname{ev}^*\CC = \pi_{P,!}^I \CC. 
\]
By classical Springer theory, $W_P$ acts on $\pi_{\mfg_P,!} \CC$, hence it acts on the right hand side too. This gives a $W_P$ action on $H^*_c (\Fl_\gamma) = \mathbf R\Gamma_c(\pi_{P,!}^I\CC)$. Lusztig shows that these actions glue to give a $W_{\operatorname{aff}}$ action on $H^*_c(\Fl_\gamma)$.

Yun upgrades this to a $\widetilde{W}$ action as follows. Let $\Omega = X_*(T) / \ZZ \Phi^\vee$. The choice of standard Iwahori provides a splitting $\Omega \hookrightarrow \widetilde W$ and an identification $\Omega = N_{LG}(I)/I$. Then $\omega \in \Omega$ acts on $\Fl_\gamma$ by right multiplication, and hence on $H^*_c(\Fl_\gamma)$. This $\Omega$ action and the $W_{\operatorname{aff}}$ action are compatible and generate the $\widetilde{W}$ action on $H_c^*(\Fl_\gamma)$.

Let $\Lambda$ be as in \Cref{sect:local_picard}. The evaluation morphism $\Fl^P_\gamma \to [\mfg_P/G_P]$ factors through the quotient $\Lambda \setminus\Fl_\gamma^P$ because $\Lambda$ inherits its action from $Z_{LG}(\gamma)$. To spell it out: $\operatorname{Ad}( (hgI)^{-1}) \gamma = \operatorname{Ad} ((gI)^{-1})\gamma $ so the action of $Z_{LG}(\gamma)$ does not change the reduction type. In addition, since $\Omega$ acts on $\Fl_\gamma$ by right multiplication, it commutes with the $Z_{LG}(\gamma)$ action. Therefore the constructions of Lusztig and Yun extend to give a $\widetilde{W}$ action on $H^*_c(\Lambda\setminus\Fl_\gamma) = H^*(\Lambda\setminus\Fl_\gamma)$.

\subsection{The $Z_{LG}(\gamma)$ invariants} Starting from this section, we abbreviate $Z_{LG}(\gamma) = Z(\gamma)$.

Let $H^*(\Fl_\gamma/\Lambda)^{Z(\gamma)}$ denote the $Z(\gamma)$-invariant part of $H^*\left(\Fl_\gamma\big/\Lambda\right)$.
\begin{proposition}\label{prop:centralizer_invariant}
    Let $\gamma \in (L^\heartsuit \mfc)_{O_P}^P$, and assume $d_{O_P} = \delta(\gamma) = \dim \Fl_\gamma$. Let $E_{O_P}$ be the $W_P$ representation corresponding to $O_P$. Then
    \[
    \left[E_{O_P} \colon H^*\left(\Lambda\setminus\Fl_\gamma\right)^{Z(\gamma)} \right]\neq 0.
    \]
\end{proposition}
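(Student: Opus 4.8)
The plan is to identify the $W_P$-representation on the relevant cohomology with an honest Springer representation, and then to isolate the top-degree piece using the dimension hypothesis $d_{O_P} = \delta(\gamma) = \dim\Fl_\gamma$. First I would fix $\gamma \in (L^\heartsuit\mfc)_{O_P}^P$ and work with the Cartesian square from \Cref{sect:springer_action}, so that $H^*_c(\Fl_\gamma) = \mathbf{R}\Gamma_c(\pi^I_{P,!}\CC)$ carries the $W_P$-action coming from classical Springer theory on $[\widetilde{\mfg_P}/G_P] \to [\mfg_P/G_P]$ via the evaluation map $\operatorname{ev}_P$. Because $\gamma$ has reduction type $O_P$ (indeed a \emph{generic} element has reduction type exactly $O_P$, since by \Cref{lem:parahoric_nilpotent} and the description of $(L^\heartsuit\mfc)_{O_P}^P$ one can choose $\gamma$ so that $\operatorname{ev}_P^{-1}(O_P/G_P) \neq \varnothing$ and is dense), the fiber of $\operatorname{ev}_P$ over a point of $O_P$ is the classical Springer fiber $F_{O_P}$ of that nilpotent, whose top cohomology $H^{2d_{O_P}}(F_{O_P})$ carries $E_{O_P}$ by Springer's theorem and the definition of the Springer correspondence (Borho--MacPherson / Lusztig normalization).

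The key step is then a dimension/degree count. By Bezrukavnikov's formula $\dim\Fl_\gamma = \delta(\gamma)$, and by hypothesis this equals $d_{O_P}$, the dimension of the Springer fiber over $O_P$. Restriction along $\operatorname{ev}_P$ of the (shifted, decomposition-theorem) summands of $\pi_{\mfg_P,!}\CC$ shows that the local system $\mathcal{L}_{E_{O_P}}$ on the orbit $O_P$ attached to $E_{O_P}$ contributes to $H^*_c(\Fl_\gamma)$ a copy of $E_{O_P}$ in a specific cohomological degree controlled by $2\dim(\operatorname{ev}_P^{-1}(O_P)) = 2\big(\dim\Fl_\gamma - \operatorname{codim}_{\mathfrak{c}}\text{-type stuff}\big)$ — here I would use that, under the assumption $d_{O_P}=\delta(\gamma)$, the stratum $\operatorname{ev}_P^{-1}(O_P)$ is open dense in $\Fl_\gamma$ (this is exactly the maximality forcing equality of dimensions, as in \cite[Section 10.11]{jantzen2004nilpotent} applied componentwise), so its closure is all of $\Fl_\gamma$ and it contributes in top degree $2d_{O_P}=2\delta(\gamma)$. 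Hence $[E_{O_P} : H^{2\delta(\gamma)}_c(\Fl_\gamma)]\neq 0$, and a fortiori $[E_{O_P}: H^*_c(\Fl_\gamma)]\neq 0$.

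The remaining step is to descend this from $\Fl_\gamma$ to the $Z(\gamma)$-invariants of $H^*(\Lambda\setminus\Fl_\gamma)$. Since $\Lambda$ acts freely (after the splitting of \Cref{sect:local_picard}) with $\Lambda\setminus\Fl_\gamma$ projective, and the $W_P$-action is built from $\operatorname{ev}_P$ which factors through $\Lambda\setminus\Fl_\gamma^P$ and is $Z(\gamma)$-invariant, the $W_P$-module $H^*(\Lambda\setminus\Fl_\gamma)$ is a summand (the $\Lambda$-coinvariants, equivalently invariants, being a direct summand as $\Lambda$ is a finite group acting on cohomology) of $H^*_c(\Fl_\gamma)$ compatibly with the Springer structure; taking further $Z(\gamma)/\Lambda$-invariants (a finite-group average, hence exact and summand-preserving) one checks that the top-degree $E_{O_P}$-isotypic vector survives, because it comes from the fundamental class of the dense open $Z(\gamma)$-stable substack $\operatorname{ev}_P^{-1}(O_P)$, which is manifestly $Z(\gamma)$-invariant. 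I expect the main obstacle to be precisely this last point: showing that the $E_{O_P}$-copy that appears in top cohomology is \emph{not} killed by passing to $Z(\gamma)$-invariants — i.e., that the relevant cohomology class is $Z(\gamma)$-fixed. The clean way around it is to observe that $\operatorname{ev}_P$ itself is $Z(\gamma)$-invariant (as noted in \Cref{sect:springer_action}: $\operatorname{Ad}((hgI)^{-1})\gamma = \operatorname{Ad}((gI)^{-1})\gamma$ for $h\in Z(\gamma)$), so the entire $W_P$-equivariant decomposition of $H^*_c(\Fl_\gamma)$ pulled back from $[\mfg_P/G_P]$ lands in the $Z(\gamma)$-invariants, and the dimension count above then applies verbatim inside $H^*(\Lambda\setminus\Fl_\gamma)^{Z(\gamma)}$.
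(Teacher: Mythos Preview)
There are two genuine gaps.

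\textbf{The locus $\Fl_\gamma^{O_P}$ is closed, not open dense.} Under the hypothesis $d_{O_P}=\delta(\gamma)$, the dimension count forces $\Lambda\setminus\Fl_\gamma^{P,O_P}$ to be zero-dimensional (a finite set), and rules out reduction types $O'\subsetneq\overline{O_P}$ because those would give Springer fibers of dimension $d_{O'}>d_{O_P}=\dim\Fl_\gamma$. Hence $\Fl_\gamma^{O_P}=(\pi_P^I)^{-1}\operatorname{ev}_P^{-1}(\overline{O_P})$ is \emph{closed}. Nothing prevents the image of $\operatorname{ev}_P$ from meeting orbits $O''\not\leq O_P$, and the corresponding strata in $\Fl_\gamma$ can have full dimension, so your density claim fails. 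The paper instead uses the open--closed triangle for $\Fl_\gamma^{O_P}\hookrightarrow\Lambda\setminus\Fl_\gamma\hookleftarrow U$ and proves that $E_{O_P}$ does not occur in $H^*_c(U)$: stratify $U$ by reduction type $O''$, run the Leray spectral sequence for $(G_P/B_P)_{e''}\to\Fl_\gamma^{O''}\to\Fl_\gamma^{P,O''}$, and invoke the lemma that $E_{O_P}$ only appears in $H^*((G_P/B_P)_e)$ for $e\in\overline{O_P}$. This isolates the $E_{O_P}$-isotypic part of $H^*(\Lambda\setminus\Fl_\gamma)$ as exactly that of $H^*(\Lambda\setminus\Fl_\gamma^{O_P})$.

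\textbf{$Z(\gamma)$-invariance of $\operatorname{ev}_P$ does not give $Z(\gamma)$-invariance of the isotypic piece.} Your concluding step says that because $\operatorname{ev}_P$ is $Z(\gamma)$-invariant, ``the entire $W_P$-equivariant decomposition \dots\ lands in the $Z(\gamma)$-invariants.'' That only shows the $W_P$ and $Z(\gamma)$ actions commute, so the $E_{O_P}$-isotypic component is $Z(\gamma)$-\emph{stable}; $Z(\gamma)$ can (and does) permute the copies of $E_{O_P}$ sitting over the finitely many points of $\Lambda\setminus\Fl_\gamma^{P,O_P}$. Also, $\Lambda$ is the maximal \emph{free} quotient of $\pi_0(\mathcal{P}_\gamma)$, hence a lattice, not a finite group; your ``$\Lambda$-coinvariants are a summand'' and ``$Z(\gamma)/\Lambda$ finite-group average'' are not directly available. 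The paper's fix is concrete: pick one $Z(\gamma)$-orbit $S=\{s_1,\dots,s_n\}$ in the finite set $\Lambda\setminus\Fl_\gamma^{P,O_P}$ with stabilizer $Z'=Z(\gamma)\cap P$; then $Z'$ acts on each $s_iH^*((G_P/B_P)_{e_P})$ through its image in $Z_{G_P}(e_P)$, which by definition of the Springer correspondence acts trivially on $E_{O_P}$. Since $Z(\gamma)$ is abelian, $Z'$ fixes every $s_iE_{O_P}$, and $Z(\gamma)/Z'$ fixes the diagonal copy in $\bigoplus_i s_iE_{O_P}$. That diagonal is the $Z(\gamma)$-fixed vector you need, and it survives into $H^*(\Lambda\setminus\Fl_\gamma)^{Z(\gamma)}$ via the isomorphism of $E_{O_P}$-isotypic parts established in the first step.
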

\begin{proof}
Let $\Fl_\gamma^{P,O_P} = \operatorname{ev}_P^{-1} (O_P)$. Then $Z(\gamma)$ acts on $\Lambda\setminus\Fl_\gamma^{P,O_P}$ and $W_P$ acts on $H^*_c\left(\Lambda\setminus\Fl_\gamma^{P,O_P}\right)$ by the constructions in \Cref{sect:springer_action}. Let $e_P \in O_P$. There is a fibration
\[
\left(G_P / B_P\right)_{e_P} \to \Lambda\setminus\Fl_\gamma^{O_P} \to \Lambda\setminus\Fl_\gamma^{P,O_P} 
\]
where $\Fl_\gamma^{O_P} = (\pi_P^I)^{-1} \Fl_\gamma^{P,O_P}$. By our assumption on $\delta(\gamma)$, $\Lambda\setminus\Fl_\gamma^{P,O_P}$ is a finite discrete set of points. Then
\[
H^*(\Lambda\setminus\Fl_\gamma^{O_P} )  = \bigoplus_{g\in \Lambda\setminus\Fl_\gamma^{P,O_P} } gH^*( (G_P/B_P )_{e_P}).
\]
Let $S= \{s_1, \dots ,s_n\}$ be the orbit of the unit coset $P / P^+ \in \Lambda\setminus\Fl_\gamma^{P,O_P} $, and $Z' = Z(\gamma)\cap P \subset Z(\gamma)$ be its stabilizer. Clearly, the image of $Z'$ in $P/P^+ = G_P$ is contained in $Z_{G_P}(e_P)$. By definition, the induced action of $Z_{G_P}$ on $E_{O_P}$ in $H^*((G_P/B_P)_{e_p})$ is trivial. Now, since $\gamma$ is regular semisimple, $Z(\gamma)$ is a torus, in particular abelian. Therefore, $Z'$ acts on each $s_i H^*((G_P/B_P)_{e_P})$ and acts trivially on the copy of $E_{O_P}$ contained within. Consider the subspace $V = \bigoplus_{s_i \in S} s_iE_{O_P}$. The action of $Z(\gamma)$ descends to an action on $V$, and we just showed that $Z'$ acts trivially. Note now that $Z(\gamma)/ Z'$ acts trivially on the diagonal copy of $E_{O_P}$ in $V$. We have shown that
\begin{equation}\label{eqn:Zgamma_inv}
    \left[E_{O_P} \colon H^*\left(\Lambda\setminus\Fl_\gamma^{O_P}\right)^{Z(\gamma)} \right]\neq 0.
\end{equation}
Now, we claim that $\Lambda\setminus\Fl_\gamma^{O_P}$ is closed in $\Lambda\setminus \Fl_\gamma$. Indeed, the image of $\operatorname{ev}_P \colon \Fl_\gamma^P \to \mfg_P$ does not intersect any $O' \subset \overline{O_P} \setminus O_P$, for if it did, $\Fl_\gamma^{O'} \subset \Fl_\gamma$ would have dimension strictly larger than $d_{O_P}$. Therefore $\Lambda \setminus \Fl_\gamma^{O_P} = (\pi_P^I)^{-1} \operatorname{ev}_P^{-1} (\overline{O_P})$ is closed.

Let $i \colon \Lambda\setminus \Fl_\gamma^{O_P} \to \Lambda\setminus \Fl_\gamma$ be the closed inclusion, and $j \colon U \to \Fl_\gamma$ be the complementary open inclusion. Adjunction gives an exact triangle
\[
j_! \CC_U = j_! j^! \CC \to \CC \to i_* i^* \CC = \CC_{\Lambda\setminus\Fl_\gamma^{O_P}}
\]
in $D^b_c(\Fl_\gamma)$. Applying $p_* = p_! \colon \Lambda\setminus \Fl_\gamma \to \operatorname{pt}$, we get
\[
H^*_c(U) \to H^*(\Lambda\setminus \Fl) \to H^*(\Lambda\setminus \Fl_\gamma^{O_P})
\]
It is clear from the constructions that the maps in the resulting long exact sequence are $W_P$-equivariant. 

We claim that the $W_P$ representation $E_{O_P}$ does not appear in $\Gamma_c(U)$. Granted this, the map between $E_{O_P}$-isotypic components of $H^*(\Lambda\setminus\Fl)$ and $H^*(\Lambda\setminus\Fl_\gamma^{O_P})$ is an isomorphism. Since $Z(\gamma)$ does not change the reduction type, it follows that the maps are $Z(\gamma)$ equivariant. Therefore the proposition follows from \eqref{eqn:Zgamma_inv}.

It remains to prove the last claim. To this end, we first show by induction that $W_P$ representations appearing in $H^*_c(U)$ appear in $H^*((G_P/B_P)_{e'})$ for various $e' \in \operatorname{ev}_P (\Fl_\gamma^P) \setminus O_P$. Let $O'$ be a minimal nilpotent intersecting $\operatorname{ev}_P (\Fl_\gamma^P) \setminus O_P$, and let $e' \in O'$. We have a fibration
\[
(G_P/B_P)_{e'} \to \Lambda\setminus \Fl_\gamma^{O'} \to \Lambda\setminus \Fl_\gamma^{P,O'}.
\]
Since the affine Springer action is constructed using the classical Springer action, the Leray-Serre spectral sequence
\[
E_2^{p,q} = H^p_c\left(\Lambda\setminus \Fl_\gamma^{P,O'} , H^q_c((G_P/B_P)_{e'})\right) \Rightarrow H_c^{p+q}(\Lambda\setminus \Fl_\gamma^{O'})
\]
is $W_P$-equivariant, with $W_P$ acting trivially on the cohomology of the base. Since Springer fibers are projective, the $W_P$ representations appearing in $H_c^{p+q}(\Lambda\setminus \Fl_\gamma^{O_P})$ appear in $H^*((G_P/B_P)_{e'})$. Now, since $O'$ was chosen to be minimal, $\Lambda \setminus \Fl_\gamma^{O'}$ is closed in $U$ (by arguments similar to the ones used before). Let $U'$ be the complementary open subset. Applying $p_! \colon U \to \operatorname{pt}$ to the open-closed exact triangle, we have a $W_P$ equivariant triangle
\[
H^*_c(U') \to H^*_c(U) \to H^*_c(\Lambda\setminus \Fl_\gamma^{O'})
\]
which finishes the inductive step. Then the claim follows from \Cref{lem:springer_smaller_orbit}. 
\end{proof}

\begin{lemma}\label{lem:springer_smaller_orbit}
    Let $O \subset \mfg$ be a nilpotent orbit, and $E_O$ be a representation attached to $O$ under the Springer correspondence. Then $E_O$ only appears in nilpotent elements contained in $\overline{O}$.
\end{lemma}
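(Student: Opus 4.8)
The plan is to reduce the statement to the standard fact that the Springer sheaf decomposes with the strata of the nilpotent cone controlling which orbits "see" which irreducible representations. Recall the Springer resolution $\mu\colon \widetilde{\mathcal N}\to\mathcal N$, and the decomposition (after pushing forward the constant sheaf, appropriately shifted) $\mu_*\CC[\dim\mathcal N] \cong \bigoplus_{(O',\rho)} E_{O',\rho}\otimes \mathrm{IC}(\overline{O'},\mathcal L_\rho)$, the sum running over pairs of a nilpotent orbit $O'$ and an irreducible local system $\mathcal L_\rho$ on it, with $E_{O',\rho}$ the corresponding irreducible $W$-representation (the trivial local system contributions giving the $E_{O'}$ appearing in the statement). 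The phrase "$E_O$ appears in the nilpotent element $e$" means $E_O$ occurs in the Springer fiber cohomology $H^*(\mu^{-1}(e))$, equivalently in the stalk of $\mu_*\CC$ at $e$. So I want to show: if the stalk of $E_O\otimes\mathrm{IC}(\overline{O'},\mathcal L_\rho)$-summands at $e$ is nonzero and contributes a copy of $E_O$, then $e\in\overline O$.

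First I would observe that in the decomposition above, the multiplicity space of $E_O$ as a $W$-representation is exactly $\mathrm{IC}(\overline{O},\CC)$ (the summand with trivial local system on $O$, since $E_O=E_{O,\mathrm{triv}}$), because the assignment $(O',\rho)\mapsto E_{O',\rho}$ is a bijection onto its image (Springer correspondence), so no other summand contributes $E_O$ as a $W$-isotypic piece. Therefore the $E_O$-isotypic part of the stalk $(\mu_*\CC)_e$ is $E_O\otimes \mathrm{IC}(\overline O,\CC)_e$. Next, $\mathrm{IC}(\overline O,\CC)$ is supported on $\overline O$ by definition of the intermediate extension — its stalk at any $e\notin\overline O$ vanishes. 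Hence if $e\notin\overline O$, the $E_O$-isotypic component of $H^*(\mu^{-1}(e)) = (\mu_*\CC)_e$ (up to shift) is zero, i.e. $E_O$ does not appear in the Springer fiber of $e$. This is exactly the contrapositive of the claim.

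The one genuinely careful point — the step I expect to need the most attention — is matching the two notions of "$E_O$ appears in a nilpotent element", since in the body of the paper (e.g. in the proof of \Cref{prop:centralizer_invariant}) this is used in the form "$W_P$-representations appearing in $H^*_c$ of various $\Lambda\setminus\Fl_\gamma^{O'}$ appear in $H^*((G_P/B_P)_{e'})$". So I should state the lemma for the \emph{finite} Springer correspondence of a reductive group (here $G_P$), phrase "appears in the nilpotent $e$" precisely as "occurs as a $W$-subrepresentation of $H^*(\mathcal B_e)$, $\mathcal B_e$ the Springer fiber", and then invoke the decomposition theorem for $\mu$ together with the defining support property of intermediate extensions. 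A minor subtlety is that one should use $H^*$ rather than $H^*_c$ and note these agree (Springer fibers, and the relevant $\Lambda$-quotients, are projective), and that shifts and the precise normalization of $\mathrm{IC}$ do not affect the support statement. Everything else is formal once the decomposition theorem is granted, so no lengthy computation is required.
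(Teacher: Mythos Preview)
Your proposal is correct and matches the paper's own proof essentially line for line: both use the Borel--Moore/decomposition-theorem expansion of the Springer sheaf as $\bigoplus_{O',\mathcal L}\mathrm{IC}(\overline{O'},\mathcal L)\otimes V_{O',\mathcal L}$, identify the $E_O$-isotypic component with $\mathrm{IC}(\overline O,\CC)$ via injectivity of the Springer correspondence, and conclude from the support of the intermediate extension. The only cosmetic difference is that the paper phrases this with the Grothendieck simultaneous resolution $\widetilde\mfg\to\mfg$ rather than the Springer resolution $\widetilde{\mathcal N}\to\mathcal N$, which is immaterial since the stalks over nilpotent $e$ agree.
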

\begin{proof}
    Let $\pi \colon \widetilde\mfg \to \mfg$ be the Grothendieck simultaneous resolution. We have a decomposition
    \[
    \pi_*\CC[\dim \mfg ] = \bigoplus_{\mathcal{L}, O} \operatorname{IC}(O,\mathcal{L}) \otimes V_{O, \mathcal{L}}
    \]
    where $O$ are nilpotent orbits and $\mathcal{L}$ are irreducible local sytems on $O$. By classical Springer theory, $V_{O,\mathcal{L}}$ carries the structure of an irreducible $W$-representation (if nonzero). In this framework, $E_O = V_{O,\CC}$. For $e \in \mathcal{N}$,
    \[
    H^*((G/B)_e) = (\pi_*\CC[\dim \mfg ] )_e  = \bigoplus_{\mathcal{L}, O} \operatorname{IC}(O,\mathcal{L})_e \otimes V_{O, \mathcal{L}}
    \]
    if $E_O$ appears in $H^*((G/B)_e)$, $\operatorname{IC}(O,\mathcal{L})_e \neq 0$ which implies $e \in \overline{O}$.
\end{proof}

Let $H^*(\Lambda \setminus \Fl_\gamma)_{st}$ be the sub $\widetilde{W}$-representation of $H^*(\Lambda \setminus \Fl_\gamma)$ on which $X_*(T)$ acts unipotently.

\begin{proposition}\label{prop:ind_cohomolgy}
    Let $\gamma \in (L^\heartsuit \mfc)_{O_P}^P$, and assume $d_{O_P} = \delta(\gamma) = \dim \Fl_\gamma$. Let $E_{O_P}$ be the $W_P$ representation corresponding to $O_P$. Then the $W$-representations
    \[
    \operatorname{Ind}_{W_P}^W E_{O_P} \text{  and  } H^*(\Lambda \setminus \Fl_\gamma)_{st}
    \]
    share an irreducible representation of $W$.
\end{proposition}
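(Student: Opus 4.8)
The plan is to bootstrap from Proposition \ref{prop:centralizer_invariant}, which tells us that $E_{O_P}$ appears in $H^*(\Lambda\setminus\Fl_\gamma)^{Z(\gamma)}$ as a $W_P$-representation, and to use Frobenius reciprocity to convert this into a statement about which $W$-irreducibles occur in the stable part $H^*(\Lambda\setminus\Fl_\gamma)_{st}$. First I would observe that the full $\widetilde{W}$-action on $H^*(\Lambda\setminus\Fl_\gamma)$ restricts on the subgroup $W_{\mathrm{aff}} = \ZZ\Phi^\vee\rtimes W$ in a controlled way: the lattice $\ZZ\Phi^\vee$ (equivalently $X_*(T)$, up to the isogeny reductions already made) acts through a finite quotient after passing to the $\Lambda$-quotient, and the subspace $H^*(\Lambda\setminus\Fl_\gamma)_{st}$ where $X_*(T)$ acts unipotently is exactly the generalized $1$-eigenspace, which is a $\widetilde{W}$-submodule because $X_*(T)$ is normal in $\widetilde W$. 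On this stable part the lattice acts trivially, so it descends to a genuine $W$-representation; this is the $W$-module whose irreducible constituents we must compare with $\mathrm{Ind}_{W_P}^W E_{O_P}$.

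The key step is then a relationship of the form
\[
H^*(\Lambda\setminus\Fl_\gamma)_{st} \;\cong\; \mathrm{Ind}_{W_P}^W\!\left( H^*(\Lambda\setminus\Fl_\gamma)^{Z(\gamma)}_{st,P}\right)
\]
or at least a weaker version relating $W$-constituents of the left-hand side to $W$-induction of the $W_P$-module $H^*(\Lambda\setminus\Fl_\gamma)^{Z(\gamma)}$ from Proposition \ref{prop:centralizer_invariant}. To get this I would use the Cartesian square of \Cref{sect:springer_action} comparing $\Fl_\gamma$ (with its Grothendieck resolution $\widetilde{\mfg_P}$) and $\Fl_\gamma^P$, together with the fact that $Z(\gamma)$ is a torus acting on $\Lambda\setminus\Fl_\gamma$ with the parahoric cells $\Lambda\setminus\Fl_\gamma^{P,O_P}$ as (finite) fixed-point data in the relevant degree. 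Concretely: the $W_{\mathrm{aff}}$-action on cohomology is built parahoric-locally out of classical Springer actions, and for a maximal parahoric the induced $W$-module structure on $H^*(\Lambda\setminus\Fl_\gamma)$ is, by Yun's construction in global Springer theory, an induction from $W_P$ of the $Z(\gamma)$-equivariant parahoric cohomology. Taking $Z(\gamma)$-invariants and restricting to the stable part commutes with this induction because $Z(\gamma)$ acts trivially on $\Lambda$ and commutes with the $\Omega$-action used to build the $\widetilde W$-action; since induction is exact, a $W_P$-constituent of $H^*(\Lambda\setminus\Fl_\gamma)^{Z(\gamma)}$ produces a nonzero $W$-submodule $\mathrm{Ind}_{W_P}^W(\text{that constituent})$ inside $H^*(\Lambda\setminus\Fl_\gamma)_{st}$, and by Frobenius reciprocity this shares an irreducible with $\mathrm{Ind}_{W_P}^W E_{O_P}$ as soon as $E_{O_P}$ occurs in $H^*(\Lambda\setminus\Fl_\gamma)^{Z(\gamma)}$ — which is precisely Proposition \ref{prop:centralizer_invariant}.

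The main obstacle I expect is making the identification ``the $W$-module $H^*(\Lambda\setminus\Fl_\gamma)_{st}$ is $\mathrm{Ind}_{W_P}^W$ of something built from $H^*(\Lambda\setminus\Fl_\gamma)^{Z(\gamma)}$'' precise and compatible with all three structures at once: the $\widetilde W$-action (really its restriction to $W$ through the splitting of $\Omega$), the torus action of $Z(\gamma)$, and the passage to the $\Lambda$-quotient. The technical heart is that the restriction of the $\widetilde W = W_{\mathrm{aff}}\rtimes\Omega$-action to a finite parahoric Weyl group $W_P$ and then the induction back up to $W$ matches the geometric induction coming from the fibration $\Lambda\setminus\Fl_\gamma^{O_P} \to \Lambda\setminus\Fl_\gamma^{P,O_P}$ used in the proof of Proposition \ref{prop:centralizer_invariant}; this is exactly the kind of compatibility Yun establishes in his global Springer theory papers, and I would invoke it (with a short check that the stable/unipotent-eigenspace truncation is respected, using that $X_*(T)$ is normal and acts trivially after quotienting by $\Lambda$ in the stable part). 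Once that compatibility is in hand the proposition is a one-line consequence of Proposition \ref{prop:centralizer_invariant} plus exactness of induction and Frobenius reciprocity.
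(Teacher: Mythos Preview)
Your outline has the right endpoints (start from Proposition~\ref{prop:centralizer_invariant}, finish with Frobenius reciprocity) but misses the two substantive steps in between, and the ``key step'' you propose is not correct as stated.

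First, there are \emph{two different} copies of $W_P$ acting on $H^*(\Lambda\setminus\Fl_\gamma)$: the parahoric copy $W_P' \subset W_{\mathrm{aff}} \subset \widetilde W$ generated by simple affine reflections, and the copy $W_P \subset W \subset \widetilde W$ obtained via the projection $\widetilde W \to W$ and the section $W \hookrightarrow \widetilde W$. Proposition~\ref{prop:centralizer_invariant} produces $E_{O_P}$ as a $W_P'$-submodule (that is how the Springer action is built in \Cref{sect:springer_action}), but Frobenius reciprocity for $\operatorname{Ind}_{W_P}^W$ requires a $W_P$-submodule. These two actions have no reason to agree on all of $H^*(\Lambda\setminus\Fl_\gamma)$; the paper's point is that they \emph{do} agree on the semisimplification of the stable part, precisely because $X_*(T)$ acts trivially there and the two copies of $W_P$ differ by translations. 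Your proposal does not distinguish these two actions, so the Frobenius reciprocity step is not justified.

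Second, your route from $Z(\gamma)$-invariants to the stable part is not the right mechanism. It is not true that $X_*(T)$ acts through a finite quotient on $H^*(\Lambda\setminus\Fl_\gamma)$, and there is no induction isomorphism of the form you write. What the paper actually uses is the (nontrivial, global) fact that the action of the subalgebra $\CC[X_*(T)]^W \subset \CC[\widetilde W]$ factors through $Z(\gamma)/\Lambda$; this is extracted from Yun's product formula argument. Granting that, the $Z(\gamma)$-fixed copy of $E_{O_P}$ is $\CC[X_*(T)]^W$-fixed, hence supported at $1 \in T/\!/W$ as a module; since $T \to T/\!/W$ is totally ramified over $1$, this forces $X_*(T)$ to act unipotently, placing $E_{O_P}$ inside $H^*(\Lambda\setminus\Fl_\gamma)_{st}$. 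Only then can one pass to the semisimplification, identify the two $W_P$-actions, and apply Frobenius reciprocity. The induction-isomorphism you posit would shortcut both of these steps, but it is not available.
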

\begin{proof}
    Note that $Z(\gamma)$ acts on $H^*(\Fl_\gamma/\Lambda)$ through the quotient $Z(\gamma)/\Lambda$. We claim that the action of $\CC[X_*(T)]^W \subset \CC[\widetilde{W}]$ factors through the action of $Z(\gamma)/\Lambda$. This follows from a slight modification of the argument in \cite[Section 5.8-5.11]{yun2014spherical}. Let us describe it. In Section \cite[Section 5.8]{yun2014spherical}, the product formula gives a homeomorphism of stacks 
    \[
    \left[( \Lambda \setminus \Fl_{a,0}) \times \Lambda\right]  \overset{P_a^{ker}}{\times} \prod_{x \in \operatorname{Sing}(a)\setminus 0} \Fl^G_{a,x} \to \left[ \widehat{M}^{\operatorname{par}}_{\hat{a},0} /S\right].
    \]
    The local Picard group $P_{a,0}$ acts on  $\left[( \Lambda \setminus \Fl_{a,0}) \times \Lambda\right]$ diagonally: $\Lambda \setminus \Fl_{a,0}$ inherits the action of $P_{a,0}$ and $P_{a,0}$ acts on $\Lambda$ through the quotient.

    Replacing $\Fl_{a,0}$ with $\left[( \Lambda \setminus \Fl_{a,0}) \times \Lambda\right]$ everywhere in \cite[Section 5.9-5.11]{yun2014spherical}, the same arugment gives that the $\CC[X_*(T)]^W$ action on $H^*_c(\Lambda\setminus \Fl_\gamma \times \Lambda) $ factors through the $Z(\gamma)$ action. Note that
    \[
    H^*_c(\Lambda \setminus \Fl_\gamma \times \Lambda) = H^*(\Lambda \setminus \Fl_\gamma) \otimes_{\CC} \CC[\Lambda].
    \]
    By the construction of the affine Springer action, $\CC[X_*(T)]^W$ acts only on the first factor. The claim follows.

    By \Cref{prop:centralizer_invariant}, there is a copy of $E_{O_P}$ in $H^*(\Lambda\setminus\Fl_\gamma)$ fixed by $\CC[X_*(T)]^W$. Thus, as a $\CC[X_*(T)]^W$ module, it is supported above the maximal ideal corresponding to the point $1 \in T//W$. The covering $T \to T//W$ is finite flat and totally ramified above 1. Therefore, if $V$ is a $\CC[X_*(T)]$ module on which $\CC[X_*(T)]^W$ acts trivially, the action of $X_*(T)$ on $V$ must be unipotent. It follows that $H^*(\Lambda\setminus\Fl_\gamma)_{st}$ contains a copy of $E_{O_P}$.

    Let $V'$ be the semisimplification of $H^*(\Lambda \setminus \Fl_\gamma)_{st}$ as a $\widetilde{W}$-module. Let $W_P'$ be the $\widetilde{W}$-module generated by simple reflections, and $W_P$ be its isomorphic image under the projection $\widetilde{W} \to W$. We also view $W_P \subset \widetilde{W}$ by the inclusion of $W \hookrightarrow \widetilde{W}$. The previous paragraph shows that
    \[
    \left[E_{O_P} \colon V'\right]_{W_P'} \neq 0.
    \]
    Since $X_*(T)$ acts trivially on $V'$, it follows that the decomposition of $V'$ into irreducible $W_P'$-representations agrees with the decomposition of $V'$ into irreducible $W_P$-representations. By Frobenius reciprocity,
    \[
    \Hom_{W_P}(\operatorname{Ind}_{W_P}^W E_{O_P}, V') \neq 0.
    \]
    Since $W_P$ is a finite group, its representations are unaffected by semisimplification. We conclude that
    \[
        \Hom_{W_P}\left(\operatorname{Ind}_{W_P}^W E_{O_P}, H^*(\Lambda\setminus\Fl_\gamma)_{st}\right) \neq 0
    \]
    which is what we wanted.
\end{proof}

\begin{remark}
    Let $d = \dim \Fl_\gamma$. Tracing through the proofs of \Cref{prop:centralizer_invariant} and \Cref{prop:ind_cohomolgy}, one obtains the more precise result
    \[
        \Hom_{W_P}\left(\operatorname{Ind}_{W_P}^W E_{O_P}, H^d(\Lambda\setminus\Fl_\gamma)_{st}\right) \neq 0.
    \]
\end{remark}

\section{Global Springer theory}

We recall Yun's global Springer theory before using tools it provides to prove \Cref{thm_KL}.

\subsection{Parabolic Hitchin stack} We recall the definition of the parabolic Hitchin stack and related spaces following \cite[Section 2]{yun2011global}.

Let $X$ be a smooth projective curve of genus $g$. Let $D$ be a divisor on $X$ satisfying $\deg D > 2g$, $D = 2D'$ for some other divisor $D'$.  
\begin{definition}[{\cite[Definition 2.1.1]{yun2011global}}]\label{def:parabolic_hitchin}
    Define $\mathcal{M}^{\operatorname{par}}_{G,X,D}$ to be the functor which sends a scheme $S$ to the groupoid of quadruples $(x,\varepsilon,\varphi, \varepsilon_x^B)$ where
    \begin{itemize}
        \item[--] $x \in \Hom(S,X)$. Denote its graph by $\Gamma(x)$
        \item[--] $\varepsilon$ is a $G$-torsor over $S \times X$
        \item[--] $\varphi \in H^0(S\times X ,\operatorname{Ad}(\varepsilon)\otimes_{\mathcal{O}_X} \mathcal{O}_X(D))$
        \item[--] $\varepsilon_x^B$ is a $B$-reduction of $\varepsilon$ along $\Gamma(x)$. 
    \end{itemize}
\end{definition}

For a $\mathbb{G}_m$-equivariant stack $\mathcal{S}$ over $X$ (with respect to the trivial action on $X$), we define
\[
\mathcal{S}_D \coloneqq \rho_D \overset{\mathbb{G}_m}{\times}_X \mathcal{S} =\left[(\rho_D \times_X \mathcal{S} )/ \mathbb{G}_m\right]
\]
where $\rho_D$ is the complement of the zero section in $\mathcal{O}_X(D)$, and $\mathbb{G}_m$ acts diagonally.

Let $\mathcal{A}^{\operatorname{Hit}} = H^0(X, \mfc_D)$. 
\begin{definition}
    The enhanced Hitchin base $\widetilde{\mathcal{A}}^{\operatorname{Hit}}$ is the pullback of the twisted quotient morphism $\mathfrak{t}_D \to \mathfrak{c}_D$ along the evaluation morphism $X \times \mathcal{A}^{\operatorname{Hit}} \to \mfc_D$:
    \begin{center}
    \begin{tikzcd}
        \widetilde{\mathcal{A}}^{\operatorname{Hit}}\arrow[d,"q"]\arrow[r,"\widetilde{\operatorname{ev}}"] & \mathfrak{t}_D\arrow[d]\\
        \mathcal{A}^{\operatorname{Hit}}\times X\arrow[r, "\operatorname{ev}"] & \mfc_D
    \end{tikzcd}
    \end{center}
\end{definition}

Recall the Hitchin stack $\mathcal{M}^{\operatorname{Hit}}_{G,X,D}$ which sends a scheme $S$ to the groupoid of pairs $(\varphi, \varepsilon)$ where the symbols are as in \Cref{def:parabolic_hitchin}. Evaluating the Higgs field $\varphi$ at a point, we have evaluation morphisms
\[
\operatorname{ev^{par}}\colon \mathcal{M}^{\operatorname{par}} \to [\mathfrak{b}/B]_D \text{  and  } \operatorname{ev^{Hit}}\colon \mathcal{M}^{\operatorname{Hit}} \times X \to [\mfg/G]_D
\]
There is a commutative diagram
\begin{center}
\begin{tikzcd}
     \mathcal{M}^{\operatorname{par}}\arrow[r,"\operatorname{ev^{par}}"] \arrow[d]& \left[\mathfrak{b}/B\right]_D \arrow[d,"\pi_G"]\arrow[r]& \mathfrak{t}_D\arrow[d]\\
     \mathcal{M}^{\operatorname{Hit}}\times X\arrow[r,"\operatorname{ev^{Hit}}"]  & \left[\mathfrak{g}/G\right]_D\arrow[r] & \mathfrak{c}_D

\end{tikzcd}
\end{center}
where $\pi_G$ is the Grothendieck simultaneous resolution for $G$, the left square is Cartesian, and the left arrow forgets the $B$-reduction. Along with the Hitchin map $\mathcal{M}^{\operatorname{Hit}} \to \operatorname{\mathcal{A}^{Hit}}$, this diagram defines
\[
\widetilde{f} \colon\operatorname{\mathcal{M}^{par}} \to \widetilde{\mathcal{A}}^{\operatorname{Hit}}
\]
which we call the enhanced parabolic Hitchin fibration. The composition
\[
f\colon \operatorname{\mathcal M^{par}} \to  \widetilde{\mathcal{A}}^{\operatorname{Hit}}\xrightarrow{q} \mathcal{A}^{\operatorname{Hit}} \times X 
\]
is called the parabolic Hitchin fibration.

From now on, we suppress references to the fixed curve $X$ and divisor $D$.

\subsection{Global Picard} We follow \cite[Section 4.3]{ngo2010lemme}, \cite[Section 2.3]{yun2011global}.

Recall the regular centralizer $J$ from \Cref{sect:local_picard}. For any scheme $S$ and $a \in \mathcal{A}^{\operatorname{Hit}}(S) = \operatorname{Maps}(S \times X ,\mfc_D)$, consider the stack $\mathcal{P}_a$ which classifies $J_a \coloneqq a^* J_D$ torsors on $S \times X$. By \cite[4.3.1]{ngo2010lemme}, as $a$ varies, this defines a Picard stack $\mathcal{P} \to \mathcal{A}^{\operatorname{Hit}}$. 

By \cite[Lemma 2.3.3]{yun2011global}, $\mathcal{P}$ acts on $\mathcal{M}^{\operatorname{par}}$ and preserves the parabolic Hitchin fibration. This action is compatible with the action of $\mathcal{P}$ on $\mathcal{M}^{\operatorname{Hit}}$.

Let $\mathcal{A}^\heartsuit = \operatorname{ev}^{-1}(\mfc^{rs})$. This is an open dense subscheme of $\mathcal{A}^{\operatorname{Hit}}$. Let $\mathcal{A}$ be the open subset of $\mathcal{A}^\heartsuit$ consisting of points $a$ such that $\pi_0(\mathcal{P}_a)$ is finite. Let $\widetilde{\mathcal{A}} = q^{-1}(\mathcal{A}\times X)$.

By \cite[Proposition 2.5.1]{yun2011global}, the restriction $\mathcal{M}^{\operatorname{par}}\mid_{\mathcal{A}}$ is a smooth Deligne-Mumford stack. By \cite[Corollary 2.5.2]{yun2011global}, the parabolic Hitchin fibrations $f$ and $\widetilde{f}$ are flat and proper.

\subsection{Global Springer representations}\label{sect:global_springer_action} \begin{definition}[{\cite[Definition 3.3.2]{yun2011global}}]
    We call $f_*^{\operatorname{par}}\overline{\mathbb{Q}}_{\ell} \in D^b(\mathcal{A} \times X)$ the parabolic Hitchin complex. 

    Similarly, we call $\widetilde{f}_*^{\operatorname{par}}\overline{\mathbb{Q}}_{\ell} \in D^b(\widetilde{\mathcal{A}})$ the enhanced parabolic Hitchin complex.
\end{definition}
In \cite[Theorem 3.3.3, Proposition 3.3.5]{yun2011global}, Yun constructs left $\widetilde{W}$ actions on the parabolic and enhanced parabolic Hitchin complexe using cohomological correspondences. The action on $\widetilde{f}_*^{\operatorname{par}}\overline{\mathbb{Q}}_{\ell}$ is compatible with the $W$-action on $\widetilde{\mathcal{A}}$ over $\mathcal{A}$ via the natural projection $\widetilde{W} \to W$.

In \cite[Theorem 5.1.2, Proposition 5.2.1]{yun2011global}, Yun provides an alternative construction of the aforementioned $\widetilde{W}$ action. The construction is reminiscent of the local Springer action (see \Cref{sect:springer_action}). Let us recap the construction.

For each parahoric subgroup $P \subset LG$, Yun constructs an algebraic stack $\mathcal{M}_P$. It is a parahoric analog of $\mathcal{M}^{\operatorname{par}}$, and $\mathcal{M}^{\operatorname{par}} = \mathcal{M}_I$. They fit into a cartesian diagrams
\begin{equation}\label{diag:global_springer_action}
\begin{tikzcd}
    \mathcal{M}^{\operatorname{par}} \arrow[r]\arrow[d, "\operatorname{For}_I^P"]& \left[\underline{\mathfrak{b}}_P \arrow[r]\arrow[d,"\pi_I^{P_i}"]/ \underline{B}_P\right]_D  & \left[\widetilde{\mfg}_P / G_P^\natural \right]\arrow[d, "\pi_P"]\\
    \mathcal{M}_P\arrow[r] & \left[\underline{\mathfrak{g}}_P / \underline{G}_P\right]_D \arrow[r] & \left[{\mfg}_P / G_P^\natural \right]
\end{tikzcd}
\end{equation}
Here, $\pi_P$ is the Grothendieck simultaneous resolution of $G_P$, and the underlines denote inner forms of $G_P$, etc. over $X$. By classical Springer theory, there is an action of $W_P$ on $\pi_{P,*}\overline{\mathbb{Q}}_\ell$. By proper base change, we get an action of $W_P$ on $\pi_{I,*}^{P_i} \overline{\mathbb{Q}}_\ell$. By another application of base change, we get an action of $W_P$ on
\[
f_*^{\operatorname{par}}\overline{\mathbb{Q}}_\ell = f_*^P \operatorname{For}_{I,*}^P\overline{\mathbb{Q}}_\ell.
\]
Parallel to the local affine Springer action, Yun shows that these actions are compatible, and extend to a $\widetilde{W}$-action on the parabolic Hitchin complex.

For more details, see \cite[Construction 5.1.1]{yun2011global}.

\subsection{Duality} Consider the (enhanced) parabolic Hitchin fibrations of $G$ and $G^\vee$:
\begin{center}
\begin{tikzcd}
    \mathcal{M}_{a,x}^{\operatorname{par}} \arrow[rd,"\widetilde{f}"]\arrow[rdd,"f",swap]& & \mathcal{M}_{a,x}^{\operatorname{par},\vee}\arrow[ld,"\widetilde{f^\vee}",swap]\arrow[ldd,"f^\vee"]\\
    & \widetilde{\mathcal{A}} \arrow[d,"q"]& \\
    & \mathcal{A} \times X. &
\end{tikzcd}
\end{center}

In this section, we use results from \cite{yun2012langlands} to prove:
\begin{proposition}\label{prop:global_isom}
Let $(a,x) \in \mathcal{A} \times X$. There is an isomorphism of $W$-modules
\[
H^*(\mathcal{M}_{a,x}^{\operatorname{par}})_{st} \cong  H^*(\mathcal{M}_{a,x}^{\operatorname{par},\vee})_{st}
\]
\end{proposition}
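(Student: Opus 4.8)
The plan is to extract this from Yun's Langlands duality theorem for parabolic Hitchin fibrations \cite{yun2012langlands}, combined with proper base change. Recall that over $\mathcal{A}$ the Picard stacks $\mathcal{P}$ for $G$ and $\mathcal{P}^\vee$ for $G^\vee$ are dual in the sense of Ng\^o \cite{ngo2010lemme}, and Yun uses this autoduality to relate the parabolic Hitchin complexes $f^{\operatorname{par}}_*\overline{\mathbb{Q}}_\ell$ and $f^{\operatorname{par},\vee}_*\overline{\mathbb{Q}}_\ell$ on $\mathcal{A}\times X$ via a Fourier--Mukai type transform along the fibers of $\mathcal{P}\to\mathcal{A}$. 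The stable summand $(-)_{st}$ --- the part on which $\pi_0(\mathcal{P}_a)$, equivalently the lattice $X_*(T)\subset\widetilde{W}$, acts unipotently --- is exactly the part pulled back from the coarse Hitchin base, and on it the Fourier--Mukai kernel acts as the identity up to a cohomological shift. Thus Yun's theorem should supply an isomorphism
\[
\left(f^{\operatorname{par}}_*\overline{\mathbb{Q}}_\ell\right)_{st} \;\cong\; \left(f^{\operatorname{par},\vee}_*\overline{\mathbb{Q}}_\ell\right)_{st}
\]
in $D^b(\mathcal{A}\times X)$, compatibly with the $W$-actions of \Cref{sect:global_springer_action} under the projection $\widetilde{W}\to W$ (and its analogue for $G^\vee$).

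First I would specialize this isomorphism to the point $(a,x)\in\mathcal{A}\times X$. Since $f$ and $f^\vee$ are proper over $\mathcal{A}$ by \cite[Corollary 2.5.2]{yun2011global}, proper base change identifies the stalk of $f^{\operatorname{par}}_*\overline{\mathbb{Q}}_\ell$ at $(a,x)$ with $H^*(\mathcal{M}_{a,x}^{\operatorname{par}})$, and similarly for $G^\vee$; these identifications are compatible with the $\widetilde{W}$-actions, which are built from cohomological correspondences that commute with base change. Because the stable summand is cut out fiberwise by the action of the finite commutative group $\pi_0(\mathcal{P}_a)$, forming it commutes with passing to stalks. Taking stalks of the displayed isomorphism would then yield $H^*(\mathcal{M}_{a,x}^{\operatorname{par}})_{st}\cong H^*(\mathcal{M}_{a,x}^{\operatorname{par},\vee})_{st}$.

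It remains to record that this is an isomorphism of $W$-modules. On $H^*(\mathcal{M}_{a,x}^{\operatorname{par}})_{st}$ the lattice $X_*(T)$ acts unipotently, so after semisimplification the $\widetilde{W}$-action factors through $W$; symmetrically the action of the extended affine Weyl group of $G^\vee$ on the dual side factors through the same $W$. Since Yun's isomorphism intertwines these residual $W$-actions and $W$ is finite (hence its representation theory is unaffected by semisimplification), one obtains the desired $W$-equivariant isomorphism; a cohomological shift, if present, only reindexes the grading and leaves the $W$-module structure of $H^*$ untouched.

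The main obstacle is the first step: confirming that Yun's duality, which is a priori a statement about Fourier--Mukai transforms over the Hitchin base and so not manifestly fiberwise, actually descends to an honest isomorphism of the \emph{stable} complexes. This is precisely the content of the part of \cite{yun2012langlands} asserting that the transform acts trivially (up to shift) on the stable summand, and the technical crux will be to check that $(a,x)$ lies in $\mathcal{A}\times X$ rather than merely $\mathcal{A}^\heartsuit\times X$, so that $\pi_0(\mathcal{P}_a)$ is finite and the stable decomposition is available in the form needed.
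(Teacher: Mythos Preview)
Your overall strategy---deduce the proposition from Yun's duality theorem in \cite{yun2012langlands} and then take stalks---is exactly the one the paper uses. However, you are glossing over the two points that constitute the actual work.

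First, Yun's Theorems A and B in \cite{yun2012langlands} are stated for the \emph{enhanced} fibration $\widetilde{f}$ over $\widetilde{\mathcal{A}}$, not for $f$ over $\mathcal{A}\times X$, and they give isomorphisms $K^{-i}\cong L^i(i)$ of individual \emph{perverse cohomology sheaves} (involving a Verdier-dual/Tate twist), not an isomorphism of stable complexes as you write. The paper gets down to $\mathcal{A}\times X$ by applying $q_*$ (perverse-exact since $q$ is finite flat) and then uses the decomposition theorem plus semisimplicity of $W$-representations to pass from perverse pieces to total cohomology. Your one-line ``a cohomological shift, if present, only reindexes the grading'' does not address this; the isomorphism is not a single shift of complexes.

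Second, and more seriously, your assertion that ``Yun's isomorphism intertwines these residual $W$-actions'' is precisely the crux and is not automatic. Yun's isomorphisms are not a priori compatible with the global Springer action, which is defined via Hecke correspondences. The paper establishes $W$-equivariance in two steps. Over the nice locus $\widetilde{\mathcal{A}}^\diamondsuit$, Yun's explicit formulas \cite[(4.15)--(4.16)]{yun2012langlands} identify both $\mathbb{D}K^i_\diamondsuit$ and $L^i_\diamondsuit$ with pullbacks along $\tilde{p}\colon\widetilde{\mathcal{A}}^\diamondsuit\to\mathcal{A}^\diamondsuit$ of isomorphic $W$-invariant local systems, so the $W$-structure is forced by the $W$-torsor $\widetilde{\mathcal{A}}\to\mathcal{A}\times X$ and hence matches; Theorem B (middle extension) propagates this to all of $\widetilde{\mathcal{A}}$. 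Then, after applying $q_*$, one must check that the resulting $W$-action coincides with the global Springer action over $\mathcal{A}\times X$; this is the content of \Cref{lem:pushforward_springer_action}, proved by a separate correspondence-diagram argument. Without these two ingredients your $W$-equivariance claim is unsupported.
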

\begin{lemma}\label{lem:pushforward_springer_action}
    Let $w \in W$. Let 
    \[
    [\mathcal{H}_{w}]_\# \colon f_*^{\operatorname{par}} \CC \to f_*^{\operatorname{par}}\CC
    \]
    be the global Springer action over $\mathcal{A} \times X$ (\cite[Theorem 3.3.3]{yun2011global}), and 
    \[
    [\mathcal{H}_{w}^\natural]_\# \colon \widetilde{f}_*^{\operatorname{par}} \CC \to \widetilde{f}_*^{\operatorname{par}}\CC
    \]
    be the global Springer action over $\widetilde{A}$ (\cite[Proposition 3.3.5]{yun2011global}). Then
    \[
    q_*[\mathcal{H}_{w}^\natural]_\# \colon f_*^{\operatorname{par}} \CC \to f_*^{\operatorname{par}}\CC
    \]
    is the global Springer action $[\mathcal{H}_{w}]_\# $.
\end{lemma}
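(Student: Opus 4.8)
The plan is to reduce the statement to a compatibility of cohomological correspondences under the proper pushforward $q\colon \widetilde{\mathcal{A}} \to \mathcal{A} \times X$, and then to invoke the functoriality of Yun's constructions. First I would recall precisely how both actions are built in \cite[Section 3.3]{yun2011global}: the action $[\mathcal{H}_w]_\#$ on $f_*^{\operatorname{par}}\CC$ comes from a cohomological self-correspondence of $\mathcal{M}^{\operatorname{par}}$ over $\mathcal{A}\times X$ supported on a Hecke-type correspondence $\mathcal{H}_w$, while $[\mathcal{H}_w^\natural]_\#$ on $\widetilde{f}_*^{\operatorname{par}}\CC$ comes from the \emph{same} geometric correspondence, now viewed over $\widetilde{\mathcal{A}}$ (the enhanced base only remembers the extra $\mathfrak{t}_D$-valued datum, on which $w\in W$ acts, and $\widetilde{f}$ is $W$-equivariant over $\mathcal{A}\times X$ via $\widetilde W \to W$). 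The key point is that the defining correspondence for $[\mathcal{H}_w^\natural]_\#$ lies over the graph of the action of $w$ on $\widetilde{\mathcal{A}}$, and applying $q$ collapses this to the defining correspondence for $[\mathcal{H}_w]_\#$ over $\mathcal{A}\times X$ (since $q$ intertwines the $W$-action on $\widetilde{\mathcal{A}}$ with the trivial action on $\mathcal{A}\times X$).

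Concretely, the steps are as follows. (1) Write both correspondences in the formalism of \cite[Appendix A]{yun2011global} (or the analogous reference used in loc. cit.): a correspondence $\mathcal{C}$ with maps to source and target, decorated with a cohomological class $u\in H^{BM}_*(\mathcal{C})$, induces a map of complexes after proper pushforward. (2) Observe that the enhanced version uses $\widetilde{\mathcal{C}} = \mathcal{C} \times_{\mathcal{A}\times X} \widetilde{\mathcal{A}}$ — more precisely the fiber product formed using the two relevant maps $\widetilde{\mathcal{A}} \to \mathcal{A}\times X$, one being $q$ and the other being $q$ twisted by $w$ — together with the pulled-back fundamental class; this is exactly \cite[Proposition 3.3.5]{yun2011global}. (3) Apply base change for proper pushforward along $q$: since $q$ is proper (indeed finite), $q_* \widetilde{f}_*^{\operatorname{par}}\CC = f_*^{\operatorname{par}}\CC$ canonically, and the cohomological correspondence $[\mathcal{H}_w^\natural]_\#$ pushes forward to the correspondence attached to $(\mathcal{C}, u)$, which by construction is $[\mathcal{H}_w]_\#$. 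The compatibility of fundamental classes under this pushforward is the content of the projection formula / base change for Borel–Moore cycles.

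The main obstacle I expect is bookkeeping rather than conceptual: one must check that the twist by $w$ in the enhanced correspondence is precisely what disappears under $q_*$, i.e. that $q\circ w = q$ as maps $\widetilde{\mathcal{A}} \to \mathcal{A}\times X$, so that the two legs of $\widetilde{\mathcal{C}}$ become, after pushforward, the two (equal, up to the correspondence) legs of $\mathcal{C}$ — and that Yun's normalization of the action (including shifts and the precise generator-by-generator definition for simple reflections, then the braid-relation-respecting extension) is compatible with this pushforward. In particular one should verify the claim first for $w=s$ a simple reflection, where the correspondence is the small Hecke modification and the computation is essentially the classical Springer action in a family, and then deduce the general case from the fact that both $q_*[\mathcal{H}_{w}^\natural]_\#$ and $[\mathcal{H}_w]_\#$ satisfy the braid relations (and the relation for $s^2$) and agree on generators, hence agree for all $w$ by uniqueness of the extension \cite[Theorem 3.3.3]{yun2011global}. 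A secondary subtlety is that $\mathcal{M}^{\operatorname{par}}$ is only a smooth DM stack over $\mathcal{A}$ (not over all of $\mathcal{A}^{\operatorname{Hit}}$), so all statements should be made after restricting to $\mathcal{A}$, which is harmless since that is the locus over which the fibers in \Cref{prop:global_isom} live.
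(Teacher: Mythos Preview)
Your approach is essentially the same as the paper's: both reduce to the compatibility of the cohomological-correspondence formalism under $q_*$, using that the underlying Hecke correspondence is literally the same space over both bases. The paper's argument is, however, considerably more streamlined than what you outline. It sets up a single abstract diagram---a correspondence $C$ over $X,Y$ mapping to $\widetilde S$ and then via $q$ to $S$---and observes that the two maps
\[
\operatorname{Corr}(C;\mathcal F,\mathcal G)\longrightarrow \Hom_{\widetilde S}(\widetilde g_!\mathcal G,\widetilde f_*\mathcal F)\xrightarrow{\ q_*\ }\Hom_S(g_!\mathcal G,f_*\mathcal F)
\quad\text{and}\quad
\operatorname{Corr}(C;\mathcal F,\mathcal G)\longrightarrow \Hom_S(g_!\mathcal G,f_*\mathcal F)
\]
coincide, since both are composites of the same adjunction and base-change maps. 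Specializing to $C=\mathcal H_w$, $X=Y=\mathcal M^{\operatorname{par}}$, $\widetilde S=\widetilde{\mathcal A}$, $S=\mathcal A\times X$ finishes the proof in one step, for every $w$ at once.

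Two points where your plan overcomplicates things. First, the reduction to simple reflections followed by an appeal to braid relations is unnecessary: once you know that the correspondence-to-endomorphism construction commutes with $q_*$ for an arbitrary correspondence $C$, you apply it to $\mathcal H_w$ directly; there is no need to verify anything generator by generator. Second, your step (2), which introduces an enhanced correspondence $\widetilde{\mathcal C}=\mathcal C\times_{\mathcal A\times X}\widetilde{\mathcal A}$ and a pulled-back fundamental class, is not how the paper (or \cite[Proposition 3.3.5]{yun2011global}) is organized: the same $C$ already lives over $\widetilde{\mathcal A}$ via the two legs of $\widetilde f$, and one feeds the \emph{same} cohomological correspondence into the $\Hom_{\widetilde S}$-valued and $\Hom_S$-valued recipes. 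Your worry about the twist by $w$ (i.e.\ that $q\circ w=q$) is real but is absorbed automatically by the abstract diagram; you do not need to track it separately.
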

\begin{proof}
    We are in the following general situation. We have a commutative diagram
    \begin{center}
    \begin{tikzcd}
        & C\arrow[dl,"\overleftarrow{c}",swap] \arrow[dr,"\overrightarrow{c}"] &\\
        X\arrow[r,"\widetilde{f}"]\arrow[rd,"f",swap] & \widetilde{S} \arrow[d,"q"]&\arrow[l,"\widetilde{g}",swap] Y \arrow[ld,"g"]\\
        & S. &
    \end{tikzcd}
    \end{center}
    where the upper triangle and outer square are correspondence diagrams (see \cite[Appendix A.1]{yun2011global}). Let $\mathcal{F} \in D^b(X)$ and $\mathcal{F} \in D^b(Y)$. The composition
    \[
    \operatorname{Corr}(C;\mathcal{F},\mathcal{G}) \coloneqq \Hom_C(\overrightarrow{c}^*\mathcal{G} ,\overleftarrow{c}^! \mathcal{F}) \to \operatorname{Corr}(X \times_S Y; \mathcal{F}, \mathcal{G}) \to \Hom_S(g_!\mathcal{G}, f_*\mathcal{F})
    \]
    produces a map of complexes from a correspondence diagram. The maps above are compositions of adjunction and base change morphisms. From this, it is easy to check that 
    \begin{center}
    \begin{tikzcd}
        & \Hom_{\widetilde S}(g_!\mathcal{G}, f_*\mathcal{F})\arrow[dd,"q_*"]\\
        \operatorname{Corr}(C;\mathcal{F},\mathcal{G})\arrow[ur]\arrow[dr]\\ 
        &  \Hom_{ S}(g_!\mathcal{G}, f_*\mathcal{F})
    \end{tikzcd}
    \end{center}
    commutes.

    In our situation, $C$ is the Hecke correspondence constructed in \cite[Theorem 3.3.3]{yun2011global}, $\mathcal{G} =\mathcal{F} = \CC$, $X = Y = \mathcal{M}^{\operatorname{par}}$, $S = \mathcal{A}\times X$ and $\widetilde S = \widetilde{\mathcal{A}}$
    
\end{proof}

\begin{proof}[Proof of \Cref{prop:global_isom}]
    Let $d = \dim \mathcal{M}^{\operatorname{par}} = \dim \mathcal{M}^{\operatorname{par},\vee}$. Let 
    \[
    K = (\widetilde{f}_*\CC )_{st}[d](d/2) \text{  and  } L = (\widetilde{f^\vee}_*\CC)_{st}[d](d/2)
    \]
    and
    \[
    K^i = {}^p{H^i(K)} \text{ and } L^i = \prescript{p}{}{H^i(L)}.
    \]
    By \cite[Theorem A]{yun2012langlands}, there are natural isomorphisms of perverse sheaves
    \begin{equation}\label{eqn:perverse_isom}
    K^{-i} \cong \mathbb{D}K^i \cong L^i(i).
    \end{equation}
    Note that the global Springer action of $\widetilde{W}$ preserves the perverse filtration. We first show that Yun's isomorphism is $W$-equivariant.

    By \cite[Theorem B]{yun2012langlands}, the perverse sheaves in question are middle extensions from the nice locus $\widetilde{\mathcal{A}}^\diamondsuit \subset \widetilde{\mathcal{A}}$. Therefore it suffices to prove $W$-equivariance over the nice locus. According to \cite[Equations 4.15, 4.16]{yun2012langlands}, we have isomorphisms
    \begin{align*}
    \mathbb{D} K^i_{\diamondsuit} &\cong \tilde{p}^*\bigwedge^{i+N} (V_\ell (\mathcal{P}^0/\mathcal{A}^\diamondsuit))(d/2 - N) \\
    L^i_{\diamondsuit} &\cong  \tilde{p}^*\bigwedge^{i+N} (V_\ell (\mathcal{P}^{\vee,0}/\mathcal{A}^\diamondsuit)^*)(d/2)
    \end{align*}
    where $\tilde{p} \colon \widetilde{\mathcal{A}}^{\diamondsuit} \to \mathcal{A}^\diamondsuit$ is the natural projection, and $V_\ell (\mathcal{P}^0/\mathcal{A}^\diamondsuit)$, $V_\ell (\mathcal{P}^{\vee,0}/\mathcal{A}^\diamondsuit)$ are isomorphic $W$-invariant local systems over $\mathcal{A}^\diamondsuit$ (\cite[Lemma 4.2.1]{yun2012langlands}). Therefore the $W$-module structures on the perverse sheaves above are determined by the $W$-torsor structure of $\widetilde{\mathcal{A}}\to \mathcal{A} \times X $. It follows that the identification $\mathbb{D} K_{\diamondsuit}^i \cong L_{\diamondsuit}^i$ is $W$-equivariant.

    Next, let
    \[
    K' = ({f}_*\CC )_{st}[d](d/2) \text{  and  } L' = ({f^\vee}_*\CC)_{st}[d](d/2),
    \]
    and similarly define $K'^{i}$, $L'^{i}$. 

    The map $q \colon \widetilde{A} \to A \times X$ is finite and flat because it is the base change of $\mathfrak{t} \to \mathfrak{c}$. In particular, $q_*$ is exact for the perverse $t$-structure. Therefore applying $q_*$ to \eqref{eqn:perverse_isom}, we get
    \begin{equation}\label{eqn:pervese_isom_A}
    K'^{-i} \cong L'^{i}(i)
    \end{equation}
    which is equivariant with respect to the $q_*$-pushforward of the $W$-action. By \Cref{lem:pushforward_springer_action}, this is exactly the $W$-part of the global Springer action.

    Let us use this to conclude the proof of the proposition. By the decomposition theorem, $K'$ and $L'$ are semisimple complexes. The perverse filtration defines a $W$-stable filtration
    \[
    P_{\leq n} H^*(\mathcal{M}^{\operatorname{par}}_{a,x})_{st} = ({}^p{\tau_{\leq n + \dim {\mathcal{A} \times X}}}K'[-d])_{(a,x)}
    \]
    whose associated graded quotients are $K'^{n}_{a,x}$ (perhaps with shifts and twists). Since $W$-representations are semisimple, the isomorphism \eqref{eqn:pervese_isom_A} gives the result.
\end{proof}

\subsection{Global to local}

In this section, we use \Cref{prop:global_isom} to finish the proof of \Cref{thm_KL}.

\subsubsection{Global data} Let $X = \mathbb{P}^1$, $\gamma \in (L^\heartsuit \mfc)_{O_P}^P$ be such that $d_{O_P} = \delta(\gamma) = \dim \Fl_\gamma$. By \cite[Proposition 8.2]{kazhdan1988fixed} and \Cref{lem:c_constructible}, the subset of such $\gamma$ is dense and constructible in $(L^\heartsuit \mfc)_{O_P}^P$. By \cite[Proposition 2.9]{yun2014spherical}, there exists $N^G_\gamma>0$ such that for any $\gamma' \in L\mfg$ satisfying $\chi(\gamma') \equiv \gamma \pmod {t^{N^G_\gamma}}$, there exist isomorphisms
\[
\iota\colon \Lambda\setminus \Fl_\gamma \to \Lambda\setminus \Fl_{\gamma'} \text{  and  } \iota_{\mathcal{P}} \colon \mathcal{P}_\gamma \to \mathcal{P}_\gamma 
\]
which induces a $\widetilde{W}$-equivariant isomorphism on cohomology via pullback.  Note that in \cite[Proposition 2.9]{yun2014spherical}, the result is proved for $\Fl_\gamma$. The statement we need follows from the fact that the isomorphism $\iota$ intertwines the local Picard actions via $\iota_{\mathcal{P}}$.

Let $N = \max(N^G_\gamma, N^{G^\vee}_\gamma)$. 

\begin{lemma}\label{lem:section}
    Let $N>0$. There exists a divisor $D$, disjoint from $0 \in \mathbb{P}^1$, such that there exists $a \in \mathcal{A} = \mathcal{A}_D \subset \Gamma(X, \mathfrak{c}_D)$ satisfying
    \[
    a_0  \equiv \gamma \pmod{t^N}
    \]
    where $a_0$ is the image of $a$ in $\mathcal{O}_{X,0}$.
\end{lemma}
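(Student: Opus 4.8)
The plan is to realize $\gamma$ (truncated modulo $t^N$) as the local expansion at $0$ of a global section of $\mathfrak c_D$ over $\mathbb P^1$, for a suitable effective divisor $D$ supported away from $0$. Recall that $\mathfrak c = \mathfrak t // W \cong \AA^r$ with the grading coming from the $\mathbb G_m$-action on $\mathfrak g$, so a point of $\Gamma(X,\mathfrak c_D)$ is a tuple $(p_1,\dots,p_r)$ with $p_i \in H^0(\mathbb P^1, \mathcal O(d_i D))$ where $d_1 < \dots < d_r$ are the exponents of $W$ (increased by one), i.e. the degrees of the basic invariants. Writing $\gamma$ in coordinates as a tuple of Laurent series, topological nilpotence means each coordinate of $\gamma$ lies in $t\CC[[t]]$ (after the appropriate grading shift), and in particular the truncation $\gamma \bmod t^N$ is a tuple of polynomials in $t$ of bounded degree with no constant term.

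First I would fix a large integer $m$ (depending only on $N$ and the $d_i$) and take $D = m\cdot[\infty]$, so that $\mathcal O(d_i D) = \mathcal O(d_i m \cdot [\infty])$ on $\mathbb P^1$; this divisor is disjoint from $0$. A section of $\mathcal O(d_i m[\infty])$ is exactly a polynomial in the affine coordinate $t$ of degree $\le d_i m$. Choosing $m$ large enough that $d_i m \ge N$ for all $i$, I can simply take $p_i$ to be the truncation of the $i$-th coordinate of $\gamma$ modulo $t^N$ — a polynomial of degree $< N \le d_i m$, hence a legitimate global section. By construction the resulting $a \in \Gamma(X,\mathfrak c_D)$ satisfies $a_0 \equiv \gamma \pmod{t^N}$, where $a_0$ denotes the image in the local ring $\mathcal O_{X,0} = \CC[[t]]$. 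The divisor $D$ also satisfies $\deg D = m > 2g = 0$ and $D = 2D'$ with $D' = (m/2)[\infty]$ provided $m$ is chosen even, which we are free to do.

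The one genuine point requiring care is that we need $a \in \mathcal A$, not merely $a \in \Gamma(X,\mathfrak c_D)$: recall $\mathcal A \subset \mathcal A^\heartsuit = \operatorname{ev}^{-1}(\mathfrak c^{rs})$ is the open locus where $a$ hits the regular semisimple locus (equivalently the discriminant $\Delta(a)$ is not identically zero, together with the finiteness of $\pi_0(\mathcal P_a)$). Here I would use that $\gamma \in L^\heartsuit \mfc$, so $\Delta(\gamma) \ne 0$ as an element of $\CC((t))$; enlarging $N$ at the outset (replace $N$ by $\max(N, \operatorname{val}\Delta(\gamma) + 1)$, which is harmless since a stronger congruence implies the desired one) guarantees $\Delta(a_0) \ne 0$, so $\Delta(a)$ is not identically zero on $X$ and $a \in \mathcal A^\heartsuit$. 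For membership in $\mathcal A$ itself, i.e. finiteness of $\pi_0(\mathcal P_a)$, I expect this to be the main obstacle: one should note that this is an open condition on $\mathcal A^\heartsuit$ (by \cite{yun2011global}) and arrange it by a further generic perturbation of the higher-degree coefficients of the $p_i$ — these are ``free'' coefficients involving powers of $t$ larger than $t^{N-1}$, so modifying them does not disturb the congruence $a_0 \equiv \gamma \pmod{t^N}$. Alternatively, and more cleanly, one invokes the standard fact (e.g. \cite{ngo2010lemme}) that for $D$ sufficiently ample the locus $\mathcal A$ is dense in $\mathcal A^\heartsuit$ and its complement has positive codimension in each fiber of the truncation map $\Gamma(X,\mathfrak c_D) \to \mathfrak c(\mathcal O/t^N)$, so a section with the prescribed $N$-jet and generic higher jets lands in $\mathcal A$. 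Choosing $m$ (hence $D$) large enough for this density statement to apply completes the proof.
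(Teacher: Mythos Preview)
Your proposal is correct and, at its core, matches the paper's argument: the paper also chooses $D$ of large degree, observes that the evaluation-at-$0$ map $\Gamma(X,\mathfrak c_D)\to \mathfrak c(\mathcal O_{X,0}/t^N)$ is surjective with fibers of codimension $rN$, and then invokes Ng\^o's estimate (\cite[Prop.~6.5.1]{ngo2010lemme}) that $\mathcal A^\heartsuit\setminus\mathcal A$ has codimension at least $\deg D$ to conclude that the fiber over $\gamma$ meets $\mathcal A$ once $\deg D\ge rN+1$. Your explicit construction of a particular section (truncating the coordinates of $\gamma$) is a harmless detour the paper skips; the paper goes straight to the dimension count, which is exactly your ``alternative, more cleanly'' paragraph.
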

\begin{proof}
    By \cite[Proposition 6.5.1]{ngo2010lemme}, the codimension of $\mathcal{A}^\heartsuit \setminus \mathcal{A}$ in $\mathcal{A}^\heartsuit$ is at least $\deg(D)$. Consider the evaluation map at $0$
    \[
    \Gamma(X, \mathfrak{c}_D) \xrightarrow{\operatorname{ev}_{0,N}} \mathfrak{c}(\mathcal{O}_{X,0}/t^N).
    \]
    It is a surjective linear map. Let $V = \operatorname{ev}_{0,N}^{-1}(\gamma)$. Then $\operatorname{codim}_{\Gamma(X,\mfc_D)} V = rN$ (recall that $r$ is the rank of $G$). Since $\mathcal{A}^\heartsuit$ is open dense in $\Gamma(X, \mathfrak{c}_D)$, for any $D$ of degree at least $rN+1$, $V$ must intersect $\mathcal{A}$. 
\end{proof}
Let $D$ be the divisor provided by \Cref{lem:section} with $N$ as in the beginning of this section. We use this to define the parabolic Hitchin stacks for $G$ and $G^\vee$. Let $a$ be as in \Cref{lem:section}. 

\subsubsection{Product formula} Let $\operatorname{Sing}(a)$ be the set of points $x'$ such that $a(x') \notin \mfc^{rs}$, along with $0$. For $x' \in \operatorname{Sing}(a)$, fix a trivialization of $\mathcal{O}_{X,x'}$. The global Kostant section $\kappa \colon \mathcal{A} \to \mathcal{M}^{\operatorname{Hit}}$ gives a Hitchin pair, hence an element $\gamma(a,x) \coloneqq a_{x'} \in \mathfrak{c}(\mathcal{O}_{X,x'})$.

By the product formula(\cite[Proposition 2.4.1]{yun2011global}, \cite[Proposition 4.13.1]{ngo2010lemme}),
we have a homeomorphism of stacks
\[
\mathcal{P}_a \overset{\mathcal{P}_{\gamma(a,0)}^{\operatorname{red}}(J_{\gamma(a,0)})\times P'}{\times}\left(\Fl_{\gamma(a,0)} \times M'\right)\xrightarrow{\sim}\mathcal{M}^{\operatorname{par}}_{a,0}
\]
where
\[
P' = \prod_{x' \in \operatorname{Sing}(a) - \{0\}} \mathcal{P}_{\gamma(a,x')}^{\operatorname{red}}(J_{\gamma(a,x')}) \text{  and  } M' = \prod_{x'\in \operatorname{Sing(a)}-\{0\}} \Fl^G_{\gamma(a,x')}.
\]
For each local regular centralizer $J_\gamma$, let $J^\flat_\gamma$ its finite type N\'{e}ron model. Define $P_\gamma$ by replacing $J_\gamma$ in the definition of $\mathcal{P}_\gamma$ by $J^\flat_\gamma$. By \cite[Lemma 3.8.1]{ngo2010lemme}, the reduced structure of $P_\gamma$ is a free abelian group of finite type. By the same lemma, the local Picard stack fits in a short exact sequence
\[
1 \to R_\gamma \to \mathcal{P}_\gamma^{\operatorname{red}} \to P_\gamma^{\operatorname{red}}\to 1.
\]
Similarly, let $J_a^\flat$ be the N\'{e}ron model of $J_a$ and the corresponding Picard stack $P_a$. By \cite[Proposition 4.8.2]{ngo2010lemme}, there is an essentially surjective morphism $r\colon \mathcal{P}_a \to P_a$ with
\[
\ker(r) = \prod_{x'\in \operatorname{Sing}(a)} R_{\gamma(a,x')}.
\]
Therefore, there is a locally trivial fibration
\[
(\Lambda \setminus \Fl_{\gamma(a,0)}) \times M \to \mathcal{M}^{\operatorname{par}}_{a,0} \to A 
\]
where
\[
M = \prod_{x' \in \operatorname{Sing}(a) - \{0\}} \Lambda_{\gamma(a,x')}\setminus \Fl^G_{\gamma(a,x')}
\]
Restricting the diagram \eqref{diag:global_springer_action} to the point $(a,0)$, for any parahoric subgroup $P$, we have a commutative diagram
\begin{center}
\begin{tikzcd}
    (\Lambda \setminus \Fl_{\gamma(a,0)}) \arrow[d,"\pi_P^I"]\times M\arrow[r,hookrightarrow] & \mathcal{M}^{\operatorname{par}}_{a,0}\arrow[d,"\operatorname{For}_I^P"] \arrow[r]& \left[\widetilde{\mfg}_P/G_P\right]\arrow[d, "\pi_{\mfg_P}"]\\
    (\Lambda \setminus \Fl_{\gamma(a,0)}^P) \times M\arrow[r,hookrightarrow] & \mathcal{M}_{P,a,0} \arrow[r]& \left[{\mfg_P}/G_P\right]
\end{tikzcd}
\end{center}
where the composition of the horizontal arrows are the evaluation maps for the (parahoric) affine Springer fibers. One has similar diagrams for the action of $\omega \in \Omega$.

Therefore, we have
\begin{lemma}\label{lem:spectral_seq}
    There is a spectral sequence
    \[
    E_2^{p,q} = H^p\left(A , H^q(\Lambda \setminus \Fl_{\gamma(a,0)} \times M)\right) \Rightarrow H^{p+q}(\mathcal{M}_{a,0}^{\operatorname{par}})
    \]
    which intertwines the $\widetilde{W}$ action with respect to the global Springer action on $H^{p+q}(\mathcal{M}_{a,0}^{\operatorname{par}})$, and the affine Springer action on $H^q(\Lambda \setminus \Fl_{\gamma(a,0)})$.
\end{lemma}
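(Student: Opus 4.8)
The plan is to obtain the spectral sequence from the Leray spectral sequence of the locally trivial fibration $(\Lambda\setminus\Fl_{\gamma(a,0)})\times M \to \mathcal{M}^{\operatorname{par}}_{a,0}\to A$ constructed above (via the product formula and N\'eron model reduction), and then to identify the $\widetilde{W}$-actions on the two sides. First I would record that, since $A$ is a connected affine space and the fibration is locally trivial, the Leray spectral sequence takes the Serre form
\[
E_2^{p,q}=H^p\!\left(A,\,\underline{H}^q(\text{fiber})\right)\Rightarrow H^{p+q}(\mathcal{M}^{\operatorname{par}}_{a,0}),
\]
with local system $\underline{H}^q$ the $q$-th cohomology of the fiber $(\Lambda\setminus\Fl_{\gamma(a,0)})\times M$; by the K\"unneth formula this fiber cohomology decomposes, and the factor on which the Springer action of $W_P$ (hence of $\widetilde W$) lives is $H^q(\Lambda\setminus\Fl_{\gamma(a,0)})$, the factor $H^*(M)$ carrying no affine Springer action in the relevant sense. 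This gives the statement of the displayed $E_2$-page.

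The substance of the lemma is the compatibility of the two $\widetilde{W}$-actions, and this is where I would spend the effort. The point is that \emph{both} actions are built in exactly the same way: the global Springer action on $H^*(\mathcal{M}^{\operatorname{par}}_{a,0})$ is defined (\cite[Construction 5.1.1]{yun2011global}) by pulling back the classical Springer $W_P$-action on $\pi_{\mfg_P,*}\overline{\mathbb{Q}}_\ell$ through the cartesian squares \eqref{diag:global_springer_action} via proper base change, while the local affine Springer action on $H^*(\Lambda\setminus\Fl_{\gamma(a,0)})$ is defined (\Cref{sect:springer_action}, following \cite{lusztig1996affine,yun2014spherical}) by pulling back the \emph{same} classical action through the cartesian square built from $\operatorname{ev}_{P}\colon \Fl_\gamma^P\to[\mfg_P/G_P]$. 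The commutative diagram displayed just before the lemma statement exhibits, after restriction to $(a,0)$, that the evaluation map of $\mathcal{M}^{\operatorname{par}}_{a,0}$ and the evaluation map of $\Lambda\setminus\Fl_{\gamma(a,0)}$ into $[\widetilde{\mfg}_P/G_P]$ and $[\mfg_P/G_P]$ agree after composing with the inclusion of the fiber; hence the proper base change isomorphisms used to transport the Springer action are compatible with the inclusion of the fiber into the total space, and therefore the Springer action on the $E_2$-term (coming from the fiber) converges to the global Springer action on the abutment. The same argument applied to the diagram for the $\omega\in\Omega$ action upgrades the compatibility from $W_{\operatorname{aff}}$ to all of $\widetilde{W}$.

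Concretely I would argue as follows. The inclusion $\iota\colon(\Lambda\setminus\Fl_{\gamma(a,0)})\times M\hookrightarrow \mathcal{M}^{\operatorname{par}}_{a,0}$ of the fiber over the basepoint of $A$ is the base change of $\operatorname{ev}_I^P$-type maps, and the global Springer correspondence $\mathcal{H}_w$ of \cite[Theorem 5.1.2]{yun2011global}, being supported over $[\mfg_P/G_P]$, pulls back along $\iota$ to the local Springer correspondence of \Cref{sect:springer_action}. Functoriality of proper base change and of the cohomological-correspondence formalism (\cite[Appendix A]{yun2011global}) then gives a morphism of spectral sequences intertwining the two actions. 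The main obstacle I anticipate is purely bookkeeping: making precise that Yun's global construction restricts to the local one on the fiber requires carefully matching up the two towers of cartesian squares (the global one \eqref{diag:global_springer_action} and its restriction to $(a,0)$, versus the local one defining the affine Springer action) and checking that the product-formula homeomorphism is $\widetilde{W}$-equivariant for the diagonal action --- the latter is essentially \cite[Section 5.8--5.11]{yun2014spherical}, invoked already in the proof of \Cref{prop:ind_cohomolgy}, so it can be cited rather than reproved. Once that matching is in place, the spectral sequence and its equivariance follow formally.
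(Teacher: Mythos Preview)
Your proposal is correct and follows essentially the same approach as the paper: the spectral sequence is the Leray--Serre sequence of the locally trivial fibration produced by the product formula and N\'eron-model reduction, and the $\widetilde{W}$-equivariance is read off from the commutative diagram preceding the lemma, which identifies the global evaluation maps with the local ones on the fiber (so both Springer actions are pulled back from the same classical action on $[\mfg_P/G_P]$). One minor slip: $A$ is not an affine space---it is the base coming from the global Picard after quotienting by the affine local pieces, hence closer to an abelian variety---but your argument does not actually use this, since the Serre form of the $E_2$-page requires only local triviality of the fibration.
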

\begin{proof}[Proof of \Cref{thm_KL}]
    By \Cref{prop:ind_cohomolgy}, there is some $W$-representation, say $E$, appearing in both $\operatorname{Ind}_{W_P}^W E_{P}$ and $H^{\operatorname{top}}(\Lambda\setminus \Fl_\gamma)_{st}$ (recall $E_{P}$ is the special representation corresponding to $O_P$). By choice of $a$, and \Cref{lem:spectral_seq}, $E$ appears in
    \[
    H^{\operatorname{top}}(\mathcal{M}^{\operatorname{par}}_{a,0}) = H^{\operatorname{top}}(A) \otimes H^{\operatorname{top}}(\Lambda \setminus \Fl_{\gamma(a,0)} \times M).
    \]
    By $\widetilde{W}$-equivariance, $E$ in fact lies in $H^*(\mathcal{M}^{\operatorname{par}})_{st}$. By \Cref{prop:global_isom}, $E$ appears in $H^*(\mathcal{M}^{\operatorname{par},\vee})_{st}$, in particular, in $H^*(\mathcal{M}^{\operatorname{par},\vee})_{st}$.

    By \Cref{lem:spectral_seq}, $E$ appears in
    \[
    H^*(\Lambda \setminus \Fl_{\gamma(a,0)}^{G^\vee}) \cong H^*(\Lambda \setminus\Fl_\gamma^{G^\vee}).
    \]
    Let $E$ correspond to the pair $(O, \mathcal{F})$ where $O$ is a nilpotent orbit of $G^\vee$ and $\mathcal{F}$ is a local system on $O$. Let $e \in O$. Then
    \begin{alignat*}{2}
        \dim (G^\vee /B^\vee)_e &\geq a_{E_P}  &&\text{    (by \cite[Corollary 10.5(i)]{lusztig1992unipotent})}\\
        &= b_{E_P}  && \text{ (since }O_P\text{ is special)}\\
        &= d_{O_P} \\
        &= \dim \Fl_\gamma &&\text{ (by choice of }\gamma\text{)}\\
        &= \dim \Fl_\gamma^{G^\vee} &&\text{ (by \Cref{lem:fl_dual_same_dim})}.
    \end{alignat*}
    By the proof of \Cref{prop:centralizer_invariant}, the $W$-representations appearing in $H^*(\Lambda \setminus \Fl_{\gamma}^{G^\vee})$ are a subset of those corresponding to representations appearing in the Springer fibers of various reduction types of $\Fl_\gamma^{G^\vee}$. By \Cref{lem:springer_smaller_orbit}, if $E$ appears in the cohomology of some reduction type $O'$, then 
    \[
    \dim(G^\vee /B^\vee)_{e'} \geq \dim (G^\vee /B^\vee)_e
    \]
    where $e' \in O'$. On the other hand, we must have $\dim \Fl_\gamma^{G^\vee} \geq \dim(G^\vee /B^\vee)_{e'}$. Therefore we conclude that
    \[
    \dim (G^\vee /B^\vee)_e =a_{E_P}.
    \]
    By \cite[Corollary 10.5(ii)]{lusztig1992unipotent}, $E = j_{W_P}^W E_P$. This is exactly what we wanted to show.
\end{proof}

\section{A generalization and consequences}

We note that the methods of this paper in fact prove a strengthening of \Cref{thm_KL}. Let us briefly introduce some notation, following \cite[Section 1.1]{lusztig2015conjugacy}.

Let $s\in T$ be semisimple. There exists $x \in \mathfrak{t}_\mathbb{R}$ such that the set of roots taking integral values on $x$ gives the root system for the pseudo-Levi subgroup $Z_G(s)^\circ \coloneqq G_x$. Denote the Weyl group of this reductive group by $W_x$.

According to \cite[Theorem 1.5]{LUSZTIG20093418}, unipotent orbits in $Z_G(s)^\circ$ are obtained by truncated induction of special representations in its endoscopic groups. That is, for any unipotent class $u \subset Z_G(s)^\circ$, there exists $\zeta \in \mathfrak{t}_\mathbb{R}^*$ such that coroots of $Z_G(s)^\circ$ taking integral values on $\zeta$ correspond to roots of the root system of a reductive group $G_{x,\zeta}$ with Weyl group $W_{x,\zeta}$ and a special representation $E_{x,\zeta}$ of $W_{x,\zeta}$ such that $j_{W_{x,\zeta}}^{W_x} E_{x,\zeta}$ corresponds to $u$ under the Springer correspondence.

Note that $W_\zeta$ can be viewed as the Weyl group of a pseudo-Levi subgroup $G^\vee_\zeta \subset G^\vee$ (given by similar integral root conditions). Then $W_{x,\zeta}$ can be viewed as the Weyl group of an endoscopic group for $W_\zeta$. Therefore $j_{W_{x,\zeta}}^{W_\zeta} E_{x,\zeta}$ corresponds to a unipotent element in $G_\zeta^\vee$.

Now, we can take $x, \zeta$ to lie in the compact spherical alcoves for $G$ and $G^\vee$. We get corresponding parahoric subgroups $P_x \subset LG$, $Q_\zeta \subset LG^\vee$ such that $P_x / P_x^+ \cong G_x$ and $Q_\zeta / Q_\zeta^+ \cong G^\vee_\zeta$.  

\begin{theorem}\label{thm:generalized_KL}
    With $x, \zeta, P_x, Q_\zeta$ as above, and $E_{x,\zeta}$ any special representation of $W_{x,\zeta}$,
    \[
    \operatorname{KL}_{G^\vee}^{Q_\zeta} \left(\operatorname{Spr}_{G^\vee_\zeta} j_{W_{x,\zeta}}^{W_\zeta} E_{x,\zeta}\right) = \operatorname{KL}_G^{P_x} \left(\operatorname{Spr}_{G_x} j_{W_{x,\zeta}}^{W_x} E_{x,\zeta}\right).
    \]
\end{theorem}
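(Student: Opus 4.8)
The plan is to run the proof of \Cref{thm_KL} essentially verbatim, with the parahoric $P$ replaced throughout by $P_x\subset LG$ (so that $W_{P_x}=W_x$ and $G_{P_x}=G_x=Z_G(s)^\circ$), the role of $L^+G^\vee$ played by $Q_\zeta\subset LG^\vee$, the special $W_P$-representation $E_P$ replaced by the special $W_{x,\zeta}$-representation $E_{x,\zeta}$, and the orbit $O_P$ replaced by $O_x\coloneqq\operatorname{Spr}_{G_x}j_{W_{x,\zeta}}^{W_x}E_{x,\zeta}$ --- which is now an arbitrary, no longer necessarily special, unipotent orbit of $G_x$. Write also $O_\zeta^\vee\coloneqq\operatorname{Spr}_{G^\vee_\zeta}j_{W_{x,\zeta}}^{W_\zeta}E_{x,\zeta}$. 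As in \Cref{prop:springer_fiber_imply_thm}, the first step is a reduction: applying \Cref{prop:closure} to the parahorics $P_x$ and $Q_\zeta$, and using
\[
d_{O_x}=b_{E_{x,\zeta}}=d_{O_\zeta^\vee}
\]
(both equalities because the $b$-invariant is preserved by $j$-induction (Macdonald--Lusztig--Spaltenstein, \cite[Theorem 5.2.6]{geck2000characters}) and because $d_O=b_{E_O}$ under the Springer correspondence), it suffices to show that a generic $\gamma\in(L^\heartsuit\mfc)^{P_x}_{O_x}$ is $LG^\vee$-conjugate into $\overline{O_\zeta^\vee}+Q_\zeta^+$; the deduction of the theorem from this assertion is word-for-word the closure-and-codimension argument ending the proof of \Cref{prop:springer_fiber_imply_thm}.

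The one genuinely new ingredient is a nesting observation. Fix $\gamma\in(L^\heartsuit\mfc)^{P_x}_{O_x}$ with $\delta(\gamma)=\dim\Fl_\gamma=d_{O_x}$ (such $\gamma$ are dense and fp-constructible by \cite[Proposition 8.2]{kazhdan1988fixed} and \Cref{lem:c_constructible}). Since $E_{O_x}=j_{W_{x,\zeta}}^{W_x}E_{x,\zeta}$ is a constituent of $\operatorname{Ind}_{W_{x,\zeta}}^{W_x}E_{x,\zeta}$, every constituent of $\operatorname{Ind}_{W_x}^W E_{O_x}$ is a constituent of $\operatorname{Ind}_{W_{x,\zeta}}^W E_{x,\zeta}$. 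Hence \Cref{prop:ind_cohomolgy}, applied with $P=P_x$, yields an irreducible $W$-representation $E$ which is a constituent of $\operatorname{Ind}_{W_{x,\zeta}}^W E_{x,\zeta}$ and occurs in $H^*(\Lambda\setminus\Fl_\gamma)_{st}$. The global-to-local argument now applies with no essential change --- choose $\mathbb{P}^1$, a divisor $D$ and a point $a\in\mathcal{A}$ as in \Cref{lem:section}, invoke the product-formula spectral sequence \Cref{lem:spectral_seq} and the Langlands-duality isomorphism \Cref{prop:global_isom}, none of which use specialness of $O_x$ --- and shows that $E$ occurs in $H^*(\Lambda\setminus\Fl^{G^\vee}_\gamma)_{st}$.

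The dimension count at the end is where specialness of $E_{x,\zeta}$ is used. Write $E=(O^\vee,\mathcal F)$ under the Springer correspondence for $G^\vee$ and fix $e\in O^\vee$. On one hand, for each reduction type $O'$ of $\Fl^{G^\vee}_\gamma$ one has $\dim\Fl^{G^\vee}_\gamma\ge\dim(G^\vee/B^\vee)_{e'}$ for $e'\in O'$, and by \Cref{lem:springer_smaller_orbit} $E$ can occur in $H^*((G^\vee/B^\vee)_{e'})$ only if $e'\in\overline{O^\vee}$, so $\dim(G^\vee/B^\vee)_{e'}\ge\dim(G^\vee/B^\vee)_e$. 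On the other hand, because $E$ is a constituent of $\operatorname{Ind}_{W_{x,\zeta}}^W E_{x,\zeta}$, \cite[Corollary 10.5(i)]{lusztig1992unipotent} gives $\dim(G^\vee/B^\vee)_e\ge a_{E_{x,\zeta}}=b_{E_{x,\zeta}}=d_{O_x}=\dim\Fl_\gamma=\dim\Fl^{G^\vee}_\gamma$, using specialness of $E_{x,\zeta}$ and \Cref{lem:fl_dual_same_dim}. So all of these coincide; in particular $\dim(G^\vee/B^\vee)_e=a_{E_{x,\zeta}}$, whence \cite[Corollary 10.5(ii)]{lusztig1992unipotent} forces $E=j_{W_{x,\zeta}}^W E_{x,\zeta}$, which by transitivity of $j$-induction equals $j_{W_\zeta}^W\bigl(j_{W_{x,\zeta}}^{W_\zeta}E_{x,\zeta}\bigr)=j_{W_\zeta}^W E_{O_\zeta^\vee}$. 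By Frobenius reciprocity $E_{O_\zeta^\vee}$ is then a $W_\zeta$-constituent of $\operatorname{Res}^W_{W_\zeta}E\subseteq H^*(\Lambda\setminus\Fl^{G^\vee}_\gamma)_{st}$; feeding this into the parahoric form of the stratification argument in the proof of \Cref{prop:centralizer_invariant} for the projection $\Fl^{G^\vee}_\gamma\to\Fl^{Q_\zeta}_\gamma$ (whose fibers carry the classical Springer action of $W_\zeta=W_{Q_\zeta}$), and again applying \Cref{lem:springer_smaller_orbit} for $G^\vee_\zeta$ together with the dimension equalities above, forces the reduction type of $\Fl^{Q_\zeta}_\gamma$ to be exactly $O_\zeta^\vee$. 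Thus $\gamma\in(L^\heartsuit\mfc)^{Q_\zeta}_{O_\zeta^\vee}$, which is the assertion needed in the first paragraph.

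The main obstacle I anticipate is not geometric but combinatorial: one must check that transitivity of $j$-induction, preservation of the $b$-invariant, and the estimates \cite[Corollary 10.5]{lusztig1992unipotent} all remain valid when $W_{x,\zeta}$ is a reflection subgroup of $W$ that is a pseudo-Levi Weyl group (rather than the Weyl group of a genuine Levi), nested inside the pseudo-Levi Weyl group $W_x$. These facts are developed in exactly this generality in \cite{LUSZTIG20093418,lusztig1992unipotent,lusztig2015conjugacy}, so I expect no real difficulty, but the bookkeeping around the two nested pseudo-Levis, and the parahoric form of the stratification step, is where care is required.
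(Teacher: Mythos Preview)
Your proposal is correct and follows essentially the same strategy as the paper's proof. The only organizational difference is that the paper, after passing to $H^*(\Lambda\setminus\Fl_\gamma^\vee)$ via the global argument, immediately restricts to $W_\zeta$ by Frobenius reciprocity and applies \cite[Corollary~10.5]{lusztig1992unipotent} to the pair $(W_{x,\zeta},W_\zeta)$ to pin down the $W_\zeta$-constituent as $j_{W_{x,\zeta}}^{W_\zeta}E_{x,\zeta}=E_{O_\zeta^\vee}$ directly; you instead apply Corollary~10.5 to the pair $(W_{x,\zeta},W)$ to identify the $W$-constituent as $j_{W_{x,\zeta}}^W E_{x,\zeta}$, and then descend to $W_\zeta$ via transitivity of $j$-induction and Frobenius reciprocity. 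Both routes are valid and rest on the same ingredients; the paper's version avoids the extra appeal to transitivity but is otherwise identical, and your anticipated ``bookkeeping'' concerns about nested pseudo-Levis are exactly the points the paper leaves implicit.
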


\begin{proof}
    Let $\gamma \in P_x$ lift $\operatorname{Spr}_{G_x} j_{W_{x,\zeta}}^{W_x} E_{x,\zeta}$. By \Cref{prop:ind_cohomolgy}, $\Hom_{W_x} ( j_{W_{x,\zeta}}^{W_x} E_{x,\zeta}, H^*(\Lambda \setminus \Fl_\gamma)_{st})\neq 0$. By Frobenius reciprocity,
    \begin{align*}
    \Hom_{W} ( \operatorname{Ind}_{W_x}^Wj_{W_{x,\zeta}}^{W_x} E_{x,\zeta}, H^*(\Lambda \setminus \Fl_\gamma)_{st}) \neq 0.
    \end{align*}
    Therefore,
    \[
    \Hom_{W} (\operatorname{Ind}_{W_x}^Wj_{W_{x,\zeta}}^{W_x} E_{x,\zeta}, H^*(\Lambda \setminus \Fl_\gamma^\vee)) \neq 0
    \]
    where we argue as in the proof of \Cref{thm_KL}. Again by Frobenius reciprocity,
    \[
    \Hom_{W} (\operatorname{Ind}_{W_{x,\zeta}}^W E_{x,\zeta}, H^*(\Lambda \setminus \Fl_\gamma^\vee)) = \Hom_{W_\zeta} (\operatorname{Ind}_{W_{x,\zeta}}^{W_\zeta} , \operatorname{Res}_{W_\zeta}^WH^*(\Lambda \setminus \Fl_\gamma^\vee)) \neq 0.
    \]
    Let $E$ appear in $\operatorname{Ind}_{W_{x,\zeta}}^{W_{\zeta}}E_{x,\zeta}$. Then according to \cite[Corollary 10.5]{lusztig1992unipotent}, either $E = j_{W_{x,\zeta}}^{W_\zeta} E_{x,\zeta}$, or $\operatorname{dim}(G^\vee_\zeta / B^\vee_\zeta)_e > a_{E_{x,\zeta}}$, where $e$ is any nilpotent in the orbit corresponding to $E$ (perhaps not attached to the constant local system) under Springer's correspondence. The second case is impossible, as can be seen by a dimension argument as in the proof of \Cref{thm_KL}.
\end{proof}
In particular,
\begin{corollary}
    Let $S_G \subset \underline{W}$ be the union of all $\operatorname{KL}_G^P(u)$ for various $P$, $u \in P/P^+$. Then $S_G = S_{G^\vee}$.
\end{corollary}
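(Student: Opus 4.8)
The plan is to read this off from \Cref{thm:generalized_KL} together with the fact that the entire construction is symmetric under $G \leftrightarrow G^\vee$. First observe that $W$ is canonically the common Weyl group of $G$ and $G^\vee$, so $\underline{W}$ is literally the same set on both sides and the asserted equality of subsets makes sense. Exactly as in the reduction preceding \Cref{thm_main}, I would first reduce to the case that $G$ is almost simple and simply connected: $\operatorname{KL}^P$ is compatible with isogenies (via \Cref{lem:w_aff_only} and \cite[4.10]{lusztig_cells_1989}), and the parahorics of $L(G_1\times G_2)$, their Levi quotients and the set $\underline{W_1\times W_2}$ all decompose as products over the two factors, while $G\mapsto G^\vee$ respects both products and isogenies; hence $S_{G_1\times G_2} = S_{G_1}\times S_{G_2}$ and it is enough to treat each simple factor.

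For the main step, let $[w]\in S_G$, say $[w] = \operatorname{KL}_G^{P}(u)$ for a parahoric $P\subset LG(\CC)$ and a nilpotent orbit $u\subset \operatorname{Lie}G_P$. I would choose $x$ in the compact spherical alcove of $G$ with $P = P_x$, so that $G_P = G_x$ is the pseudo-Levi $Z_G(s)^\circ$ as in the discussion preceding \Cref{thm:generalized_KL}. By Lusztig's theorem \cite[Theorem 1.5]{LUSZTIG20093418}, any nilpotent orbit of a pseudo-Levi is truncated-induced from a special representation of an endoscopic Weyl group: there is a point $\zeta$, which I may take in the compact spherical alcove of $G^\vee$, and a special representation $E_{x,\zeta}$ of $W_{x,\zeta}$ with $u = \operatorname{Spr}_{G_x} j_{W_{x,\zeta}}^{W_x} E_{x,\zeta}$. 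Then \Cref{thm:generalized_KL} applied to this $x,\zeta$ gives
\[
\operatorname{KL}_G^{P_x}(u) \;=\; \operatorname{KL}_{G^\vee}^{Q_\zeta}\!\left(\operatorname{Spr}_{G^\vee_\zeta} j_{W_{x,\zeta}}^{W_\zeta} E_{x,\zeta}\right),
\]
and $\operatorname{Spr}_{G^\vee_\zeta} j_{W_{x,\zeta}}^{W_\zeta} E_{x,\zeta}$ is a genuine nilpotent orbit of $G^\vee_\zeta = Q_\zeta/Q_\zeta^+$ (again by \cite[Theorem 1.5]{LUSZTIG20093418}). Hence $[w]\in S_{G^\vee}$, so $S_G\subseteq S_{G^\vee}$; running the same argument with $G$ and $G^\vee$ exchanged (using $(G^\vee)^\vee = G$) gives the reverse inclusion and hence equality.

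The step I expect to require the most care is not deep: it is checking that the points $x$ and $\zeta$ can genuinely be chosen simultaneously in the compact spherical alcoves of $G$ and $G^\vee$, so that both $\operatorname{KL}_G^{P_x}$ and $\operatorname{KL}_{G^\vee}^{Q_\zeta}$ are honest parahoric Kazhdan--Lusztig maps rather than maps attached to arbitrary pseudo-Levis --- but this is precisely what the discussion preceding \Cref{thm:generalized_KL} (following \cite{lusztig2015conjugacy} and \cite{LUSZTIG20093418}) already arranges. The genuine content of the corollary is entirely contained in \Cref{thm:generalized_KL}, together with Lusztig's reduction of arbitrary nilpotent orbits in pseudo-Levis to $j$-induction of special representations; no new geometric input is needed.
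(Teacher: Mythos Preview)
Your argument is correct and is precisely the intended one: the paper states the corollary with no proof beyond ``In particular,'' because it follows immediately from \Cref{thm:generalized_KL} together with Lusztig's \cite[Theorem 1.5]{LUSZTIG20093418} exactly as you describe, and the symmetry $G\leftrightarrow G^\vee$ gives the reverse inclusion. The reduction to the almost simple, simply connected case is already in force at this point of the paper, so that step is unnecessary (and the references you cite for it pertain to $\widetilde{\operatorname{AV}}$ rather than $\operatorname{KL}^P$), but this does not affect correctness.
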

\begin{remark}
    \Cref{thm_KL} is the special case of \Cref{thm:generalized_KL} where $\zeta =1$.
\end{remark}

\subsection{Strata} In \cite{lusztig2015conjugacy}, Lusztig defines a partition of $G$ into subsets called strata. Each stratum is a union of conjugacy classes. This partition has desirable properties. For example, each stratum is locally closed (see \cite{carnovale2020lusztig}). Let us briefly recall this definition. 

Let $g = su \in G$ be the Jordan decomposition of an element. Let $W_s$ be the Weyl group of $Z_G(s)^\circ$. Under Springer's correspondence, $u$ corresponds to a representation $E_u$ of $W_s$. We associate the representation $j_{W_s}^W E_u$ of $W$ to $g$.

\begin{definition}[\cite{lusztig2015conjugacy}]
    This assignment gives a well defined map $G \to \operatorname{Rep}(W)$. The fibers of this map are called the strata of $G$.
\end{definition}

In \cite[Section 6.4]{lusztig2015conjugacy}, Lusztig conjectures that strata can alternatively be defined as the fibers of parahoric Kazhdan-Lusztig maps. We provide some evidence to support this conjecture.

Let $g \in su \in G$ be such that $u$ is special in $Z_G(s)^\circ$. We denote by $P_x$ a parahoric subgroup such that $P_x/P_x^+$ is identified with $Z_G(s)^\circ \subset G$. Let $W_x$ denote the Weyl group of the Levi quotient of $P_s$. We require that the inclusion $W_x \hookrightarrow W$ agrees with $W_s \hookrightarrow W$ up to an inner automorphism of $W$. In particular, under this hypothesis, $j_{W_x}^W E_u = j_{W_s}^W E_u$.

We claim that $\operatorname{KL}_G^{P_x}(E_u)$ is independent of the choice of $P_x$. Indeed, let $P_{x'}$ be another parahoric subgroup satisfying the above requirements. According to \Cref{thm_KL},
\[
\operatorname{KL}_G^{P_x}(u) = \operatorname{KL}_{G^\vee}(\operatorname{Spr}j_{W_x}^W E_u) = \operatorname{KL}_{G^\vee}(\operatorname{Spr}j_{W_{x'}}^W E_u) = \operatorname{KL}_G^{P_{x'}}(u).
\]
This gives a well defined map
\[
G \supset S \coloneqq \{g = su \mid u \text{ is special in } Z_G(s)^\circ\} \xrightarrow{\varphi} \underline{W}.
\]
Since $\operatorname{KL}_{G^\vee}$ is injective, it follows that 
\begin{corollary}
    The fibers of $\varphi$ are exactly the strata of $G$ contained in $S$.
\end{corollary}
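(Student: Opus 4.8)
The plan is to recognize $\varphi$ as the composite of Lusztig's stratum map with two injective maps, so that the description of its fibers becomes formal. Concretely, for $g = su \in S$ we have $\varphi(g) = \operatorname{KL}_G^{P_x}(u)$, and applying \Cref{thm_KL} to the special class $u$ in $G_{P_x}$ (with corresponding $W_x$-representation $E_u$) rewrites this as $\operatorname{KL}_{G^\vee}(\operatorname{Spr}_{G^\vee} j_{W_x}^W E_u)$; since $W_x \hookrightarrow W$ and $W_s \hookrightarrow W$ differ by an inner automorphism of $W$, we may replace $j_{W_x}^W E_u$ by $j_{W_s}^W E_u$. Thus $\varphi = \operatorname{KL}_{G^\vee} \circ \operatorname{Spr}_{G^\vee} \circ (\sigma|_S)$, where $\sigma\colon G \to \operatorname{Rep}(W)$ sends $g = su$ to $j_{W_s}^W E_u$ and is, by definition, the map whose fibers are the strata of $G$.

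Next I would establish injectivity of the maps $\operatorname{Spr}_{G^\vee}$ and $\operatorname{KL}_{G^\vee}$ on the classes that occur. For $g = su \in S$ the class $u$ is special in $Z_G(s)^\circ$, so $E_u$ is a special $W_s$-representation; by \cite[Theorem 1.5]{LUSZTIG20093418} together with the compatibility of $j$-induction with the $a$- and $b$-invariants (which gives $a_{j_{W_s}^W E_u} = a_{E_u} = b_{E_u} = b_{j_{W_s}^W E_u}$, the $b$-equality being part of the definition of $j$), the representation $j_{W_s}^W E_u$ is a special representation of $W$. The Springer correspondence for $G^\vee$ restricts to a bijection between special representations of $W$ and special nilpotent orbits of $G^\vee$, hence is injective on the classes at hand, and $\operatorname{KL}_{G^\vee}$ is injective by \cite[Theorem 1.11]{yun2021minimal}. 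Therefore, for $g, g' \in S$, one has $\varphi(g) = \varphi(g')$ if and only if $\sigma(g) = \sigma(g')$, i.e.\ if and only if $g$ and $g'$ lie in the same stratum; in particular each fiber of $\varphi$ is the intersection of a single stratum with $S$.

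It then remains to see that $S$ is itself a union of strata, which turns ``intersection with a stratum'' into ``stratum''. By the same compatibility of $a$ and $b$ with $j$-induction, $j_{W_s}^W E_u$ is special exactly when $a_{E_u} = b_{E_u}$, i.e.\ exactly when $u$ is special in $Z_G(s)^\circ$; thus $S = \sigma^{-1}(\operatorname{Rep}^{sp}(W))$ is a union of strata. Consequently a stratum meeting $S$ is contained in $S$, so each fiber of $\varphi$ is a stratum contained in $S$; conversely, if a stratum with invariant $E$ lies in $S$ then $\varphi$ is constant on it with value $\operatorname{KL}_{G^\vee}(\operatorname{Spr}_{G^\vee} E)$, and by the previous step that stratum is precisely the fiber of $\varphi$ over this value. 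Hence the fibers of $\varphi$ are exactly the strata of $G$ contained in $S$.

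The one point I expect to require care is the assertion, used twice above, that ``$u$ special in $Z_G(s)^\circ$'' is detected by the stratum invariant $j_{W_s}^W E_u$ being a special representation of $W$. This rests on the compatibility of $j$-induction with the $a$-function, and since $W_s$ is only a pseudo-Levi (not a genuine Levi) reflection subgroup of $W$, it should be quoted carefully from \cite{LUSZTIG20093418} and the theory of the $a$-function (e.g.\ \cite{geck2000characters}), or else extracted directly from Lusztig's analysis in \cite{lusztig2015conjugacy} of which strata admit a representative $su$ with $u$ special in $Z_G(s)^\circ$.
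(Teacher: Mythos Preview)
Your argument is essentially the paper's own: the text preceding the corollary already exhibits $\varphi(g) = \operatorname{KL}_{G^\vee}\bigl(\operatorname{Spr}_{G^\vee} j_{W_x}^W E_u\bigr)$, and the paper then deduces the corollary in a single line from the injectivity of $\operatorname{KL}_{G^\vee}$, exactly as you do. One small simplification: you do not need to invoke specialness of $j_{W_s}^W E_u$ to get injectivity of $\operatorname{Spr}_{G^\vee}$, since in the paper's usage $\operatorname{Spr}_{G^\vee}$ is already the inverse of the injective assignment $O \mapsto E_O$ (trivial local system), hence tautologically injective on its domain.

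Where you go beyond the paper is in explicitly confronting whether $S$ is a union of strata, so that ``intersection of a stratum with $S$'' upgrades to ``stratum contained in $S$''. The paper does not isolate this point. Your instinct to flag it is right: the forward direction (special $E_u$ implies special $j_{W_s}^W E_u$) is Lusztig's result, but the converse you use to write $S = \sigma^{-1}(\operatorname{Rep}^{sp}(W))$ is not obviously true for arbitrary pseudo-Levi $W_s$, and your argument via ``$a$ and $b$ compatible with $j$'' would need $a_{jE_u} = a_{E_u}$ even for non-special $E_u$, which is not what the standard references give. The safer route is the one you mention at the end: cite Lusztig's own analysis in \cite{lusztig2015conjugacy} of which strata admit a representative $su$ with $u$ special in $Z_G(s)^\circ$, rather than trying to derive the converse implication from general $a$-function formalism.
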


\bibliography{citations}{}
\bibliographystyle{plain}

\end{document}